\documentclass{article}



\usepackage{amsmath}
\usepackage{amssymb}
\usepackage{amsfonts}
\usepackage{amsthm}
\usepackage[left=1in,right=1in,top=1in,bottom=1in]{geometry}
\usepackage[width=12cm,format=plain,labelfont=sf,bf,small,textfont=sf,up,small,justification=justified,labelsep=period]{caption}
\usepackage{graphicx}
\usepackage{color}
\usepackage{tikz}
\usetikzlibrary{calc}
\usepackage[backref=page]{hyperref}

\hypersetup{
    colorlinks=true,       
    linkcolor=blue,          
    citecolor=blue,        
    filecolor=blue,      
    urlcolor=blue           
}

\makeatletter
\def\iddots{\mathinner{\mkern1mu\raise\p@
\vbox{\kern7\p@\hbox{.}}\mkern2mu
\raise4\p@\hbox{.}\mkern2mu\raise7\p@\hbox{.}\mkern1mu}}
\makeatother



\newcommand{\cC}{\mathcal C}

\newcommand{\cS}{\mathcal S}

\newcommand{\cK}{\mathcal K}
\newcommand{\cF}{\mathcal F}

\newcommand{\cL}{\mathcal L}
\newcommand{\cM}{\mathcal M}
\newcommand{\cT}{\mathcal T}
\newcommand{\cR}{\mathcal R}

\newcommand{\NN}{\mathbb{N}}
\newcommand{\RR}{\mathbb{R}}
\newcommand{\CC}{\mathbb{C}}

\newcommand{\ZZ}{\mathbb{Z}}

\newcommand{\EE}{\mathbb{E}}


\DeclareMathOperator{\supp}{supp}

\DeclareMathOperator{\conv}{conv}
\DeclareMathOperator{\diag}{diag}

\DeclareMathOperator{\xcLP}{xc_{\text{LP}}} 
\DeclareMathOperator{\xcPSD}{xc_{\text{PSD}}} 


\let\Im\relax 
\DeclareMathOperator{\Im}{Im}
\let\Re\relax 
\DeclareMathOperator{\Re}{Re}

\DeclareMathOperator{\Cay}{Cay} 

\renewcommand{\imath}{\boldsymbol{i}}

\newcommand{\CUT}{\text{CUT}}



\renewcommand{\S}{\mathbf{S}} 
\newcommand{\HH}{\mathbf{H}} 
\newcommand{\psd}{\succeq}

\renewcommand{\bar}[1]{\overline{#1}}


\newcommand{\Gr}{\mathcal G} 

{\begin{list}{}%
         {\setlength{\leftmargin}{1cm}\setlength{\rightmargin}{1cm}}%
         \item[]\small%
}
{\end{list}}

\renewcommand{\tilde}[1]{\widetilde{#1}}
\renewcommand{\hat}[1]{\widehat{#1}}

\newcounter{dualtheorem}
\makeatletter
\newtheorem*{dual@theorem}{\dual@title}
\newcommand{\newdualtheorem}[2]{%
\newenvironment{dual#1}[1]{%
 \def\dual@title{#2 \ref*{##1}D}%
 \renewcommand{\thedualtheorem}{\ref*{##1}D}
 \begin{dual@theorem}
 \refstepcounter{dualtheorem}}%
 {\end{dual@theorem}}}
\makeatother

\newcounter{reptheorem}
\makeatletter
\newtheorem*{rep@theorem}{\rep@title}
\newcommand{\newreptheorem}[2]{%
\newenvironment{rep#1}[1]{%
 \def\rep@title{#2 \ref*{##1}}%
 \renewcommand{\thereptheorem}{\ref*{##1}}
 \begin{rep@theorem}
 \refstepcounter{reptheorem}}%
 {\end{rep@theorem}}}
\makeatother

\theoremstyle{plain}
\newtheorem{prop}{Proposition}

\newtheorem{lem}{Lemma}
\newtheorem{thm}{Theorem}

\newtheorem{cor}{Corollary}

\newdualtheorem{thm}{Theorem}
\newdualtheorem{prop}{Proposition}
\newdualtheorem{cor}{Corollary}
\newdualtheorem{lem}{Lemma}

\newreptheorem{thm}{Theorem}
\newreptheorem{prop}{Proposition}
\newreptheorem{cor}{Corollary}
\newreptheorem{lem}{Lemma}

\theoremstyle{definition}
\newtheorem{defn}{Definition}
\newtheorem{problem}{Problem}

\theoremstyle{remark}
\newtheorem{exampleold}{Example}
\newtheorem*{example2}{Example}
\newtheorem*{rem}{Remark}

\newenvironment{example}{\begin{exampleold}}{\hfill $\lozenge$\end{exampleold}}

\newcommand{\ceil}[1]	{\left\lceil #1 \right\rceil}

\newcommand{\triangulationpoweroftwo}[3]{%
\tikzstyle{every node}=[circle, draw, fill=black,
                        inner sep=0pt, minimum width=4pt]
\begin{tikzpicture}[scale=#3]
\def\N{#1}
\def\lmax{#2}

\pgfmathparse{\N-1}
\edef\NN{\pgfmathresult}
\draw (0:1) \foreach \i in {0,1,...,\NN} {
            -- (\i*360/\N:1) node[label={\i*360/\N:\i}]{}
        } -- cycle (0:1);
\foreach \l in {1,...,\lmax} {
     \pgfmathparse{2^(\l)}
     \edef\tl{\pgfmathresult} 
     \pgfmathparse{2^(\l)*floor(\N/2^(\l))}
     \edef\maxx{\pgfmathresult} 
    \draw (0:1)
    \foreach \xx in {0,\tl,...,\maxx} {
       -- (\xx*360/\N:1)
    } -- cycle (0:1);
}
\end{tikzpicture}%
}

\newcommand{\triangulationpoweroftwoCyclePowerTwo}[3]{%
\tikzstyle{every node}=[circle, draw, fill=black,
                        inner sep=0pt, minimum width=4pt]
\begin{tikzpicture}[scale=#3]
\def\N{#1}
\def\lmax{#2}

\pgfmathparse{\N-1}
\edef\NN{\pgfmathresult}

\pgfmathparse{\N/2}
\edef\Nd{\pgfmathresult} 

\edef\pp{0.175}

\foreach \i in {1,...,\Nd} {
  \pgfmathtruncatemacro\ai{2*\i-2}
  \pgfmathtruncatemacro\bi{2*\i-1}
  \draw (\i*360/\Nd-360/\Nd-\pp*360/\Nd:1) node[label={\i*360/\Nd-360/\Nd-\pp*360/\Nd:\ai},name=x\ai]{};
  \draw (\i*360/\Nd-360/\Nd+\pp*360/\Nd:1) node[label={\i*360/\Nd-360/\Nd+\pp*360/\Nd:\bi},name=x\bi]{};
  \draw (x\ai)--(x\bi);
}

\foreach \l in {0,...,\lmax} {
     \pgfmathparse{2^(\l)}
     \edef\tl{\pgfmathresult} 
     \pgfmathparse{2^(\l)*(ceil(\Nd/2^(\l))-1)}
     \edef\maxy{\pgfmathresult} 
    \foreach \yy in {0,\tl,...,\maxy} {
       \pgfmathtruncatemacro\ya{2*\yy}
       \pgfmathtruncatemacro\yb{\ya+1}
       \pgfmathtruncatemacro\za{2*Mod(min(\yy+\tl,\Nd),\Nd)}
       \pgfmathtruncatemacro\zb{\za+1}
       \draw (x\ya)--(x\za);
       \draw (x\ya)--(x\zb);
       \draw (x\yb)--(x\za);
       \draw (x\yb)--(x\zb);
    }
}

\end{tikzpicture}%
}

\newcommand{\triangulationpowerofthree}[3]{%
\tikzstyle{every node}=[circle, draw, fill=black,
                        inner sep=0pt, minimum width=3pt]
\begin{tikzpicture}[scale=#3]
\def\N{#1}
\def\lmax{#2}

\pgfmathparse{\N-1}
\edef\NN{\pgfmathresult}

\draw (0:1) \foreach \i in {0,1,...,\NN} {
            -- (\i*360/\N:1) node[label={\i*360/\N:\i}]{}
        } -- cycle (0:1);

\foreach \l in {0,...,\lmax} {
  \foreach \k in {1,2} {
	\pgfmathparse{floor(\N-\k*3^\l)/3^(\l+1)}
    \foreach \j in {0,...,\pgfmathresult} {
      \pgfmathparse{(3^(\l+1))*\j + (\k)*(3^\l)}
      \edef\xx{\pgfmathresult}
      \pgfmathparse{3^(\l+1)*\j}
      \edef\yy{\pgfmathresult}
      \pgfmathparse{min(\N,(3^(\l+1))*\j + (\k+1)*3^\l)}
      \edef\zz{\pgfmathresult}
      \draw (\xx*360/\N:1) -- (\yy*360/\N:1)--(\zz*360/\N:1)--(\xx*360/\N:1);
    }
  }	
}%
\end{tikzpicture}%
}

\title{Sparse sum-of-squares certificates on finite abelian groups}

\author{Hamza Fawzi \and James Saunderson \and Pablo A. Parrilo\thanks{The authors are with
    the Laboratory for Information and Decision Systems, Department of
    Electrical Engineering and Computer Science, Massachusetts
    Institute of Technology, Cambridge, MA 02139. Email:
    \texttt{\{hfawzi,jamess,parrilo\}@mit.edu}.}}
\renewcommand\footnotemark{}

\date{March 3, 2015}

\begin{document}

\maketitle

\begin{abstract}
Let $G$ be a finite abelian group.
This paper is concerned with nonnegative functions on $G$ that are \emph{sparse} with respect to the Fourier basis.
We establish combinatorial conditions 
on subsets $\cS$ and $\cT$ of Fourier basis elements under which
nonnegative functions with Fourier support $\cS$ are 
sums of squares of functions with Fourier support $\cT$. 
Our combinatorial condition involves constructing a chordal 
cover of a graph related to $G$ and $\cS$ 
(the Cayley graph $\Cay(\hat{G},\cS)$) with maximal cliques 
related to $\cT$.
Our result relies on two main ingredients: the decomposition of sparse positive semidefinite matrices with a chordal sparsity pattern, as well as a simple but key observation exploiting the structure of the Fourier basis elements of $G$ (the characters of $G$).

We apply our general result to two examples. First, in the case where
$G = \ZZ_2^n$, by constructing a particular chordal cover of the 
half-cube graph, we prove that any nonnegative quadratic form in 
$n$ binary variables is a sum of squares of functions of degree at 
most $\ceil{n/2}$, establishing a conjecture of Laurent. Second,
we consider nonnegative functions of degree $d$ on $\ZZ_N$ (when $d$ divides $N$). 
By constructing a particular chordal cover of the $d$th 
power of the $N$-cycle, we prove that any such function 
is a sum of squares of functions with at most $3d\log(N/d)$ 
nonzero Fourier coefficients. Dually this shows that 
a certain cyclic polytope in $\RR^{2d}$ with $N$ vertices
can be expressed as a projection of a section of the cone of 
positive semidefinite matrices of size $3d\log(N/d)$. 
Putting $N=d^2$ gives a family of polytopes in $\RR^{2d}$ with 
linear programming extension complexity $\Omega(d^2)$ and 
semidefinite programming extension complexity $O(d\log(d))$. To the best of our knowledge, this is the first explicit family of polytopes $(P_d)$ in increasing dimensions where $\xcPSD(P_d) = o(\xcLP(P_d))$ (where $\xcPSD$ and $\xcLP$ are respectively the SDP and LP extension complexity).
\end{abstract}

\newpage
\tableofcontents

\section{Introduction}
\label{sec:intro}

Let $G$ be a finite abelian group. It is well-known that any function $f:G\rightarrow \CC$ admits a Fourier decomposition where the Fourier basis consists of the \emph{characters} of $G$. 
Such a decomposition takes the form
 \[ f(x) = \sum_{\chi \in \hat{G}} \hat{f}(\chi) \chi(x) \quad \forall x \in G \]
where $\hat{G}$ is the set of characters of $G$ (known as the \emph{dual group} of $G$) and $\hat{f}(\chi)$ are the Fourier coefficients of $f$. The function $f:G\rightarrow \CC$ is called \emph{sparse} if only a few of its Fourier coefficients are nonzero. More precisely we say that $f$ is supported on $\cS \subseteq \hat{G}$ if $\hat{f}(\chi) = 0$ whenever $\chi \notin \cS$.

This paper is concerned with functions $f:G\rightarrow \CC$ that are \emph{sparse} and \emph{nonnegative}, i.e., $f(x) \in \RR_+$ for all $x \in G$. If $f$ is a nonnegative function on $G$, a \emph{sum-of-squares certificate} for the nonnegativity of $f$ has the form:
\begin{equation}
\label{eq:intro:Gsos}
f(x) = \sum_{j=1}^J |f_j(x)|^2 \quad \forall x \in G
\end{equation}
where $f_j:G\rightarrow \CC$. Sum-of-squares certificates of nonnegative functions play an important role in optimization and particularly in semidefinite programming \cite{frgbook}.
When the function $f$ is sparse, it is natural to ask whether $f$ admits a sum-of-squares certificate that is also sparse, i.e., where all the functions $f_j$ are supported on a common ``small'' set $\cT \subseteq \hat{G}$. This is the main question of interest in this paper:
\begin{equation}
\tag{Q}
\label{eq:questionQ}
\parbox[t]{0.8\textwidth}{
Given $\cS \subseteq \hat{G}$, find a subset $\cT \subseteq \hat{G}$ such that any nonnegative function $G\rightarrow \RR_+$ supported on $\cS$ admits a sum-of-squares certificate supported on $\cT$.
}
\end{equation}
Our main result is to give a sufficient condition for a set $\cT$ to satisfy the requirement above for a given $\cS$.
The condition is expressed in terms of \emph{chordal covers} of the Cayley graph $\Cay(\hat{G},\cS)$. Recall that the Cayley graph $\Cay(\hat{G},\cS)$ is the graph where nodes correspond to elements of $\hat{G}$ and where $\chi,\chi'$ are connected by an edge if $\chi^{-1} \chi' \in \cS$.
Our main result can be stated as follows:
\begin{thm}
\label{thm:intro-main-sos}
Let $\cS \subseteq \hat{G}$. Let $\cT$ be a subset of $\hat{G}$ obtained as follows: Let $\Gamma$ be a chordal cover of $\Cay(\hat{G},\cS)$, and for each maximal clique $\cC$ of $\Gamma$, let $\chi_{\cC}$ be an element of $\hat{G}$; define
\begin{equation}
 \label{eq:TGammachiC}
 \cT(\Gamma,\{\chi_{\cC}\}) = \bigcup_{\cC} \chi_{\cC} \cC
\end{equation}
where the union is over all the maximal cliques of $\Gamma$ and where $\chi_{\cC} \cC:=\{\chi_{\cC} \chi : \chi \in \cC\}$ is the translation of $\cC$ by $\chi_{\cC}$. Then any nonnegative function supported on $\cS$ admits a sum-of-squares certificate supported on $\cT(\Gamma,\{\chi_{\cC}\})$.
\end{thm}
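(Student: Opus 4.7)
The plan is to combine a Gram-matrix description of sparse sums of squares with the classical decomposition theorem for PSD matrices having a chordal sparsity pattern. The first step records the standard dictionary: a function $f:G\to\RR$ admits a representation $f=\sum_j |f_j|^{2}$ with every $f_j$ Fourier-supported on $\cT\subseteq\hat{G}$ if and only if there is a Hermitian PSD matrix $Q$ on $\cT\times\cT$ whose autocorrelation reconstructs $\hat{f}$; explicitly, $\hat{f}(\psi)=\sum_{\chi,\chi'\in\cT:\,\chi'\chi^{-1}=\psi} Q_{\chi,\chi'}$ for every $\psi\in\hat{G}$.

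The second step is to exhibit a natural PSD Gram matrix for $f$ on all of $\hat{G}$. Set $M\in\CC^{\hat{G}\times\hat{G}}$ with $M_{\chi,\chi'}=\hat{f}(\chi'\chi^{-1})$. Unfolding the definition of the Fourier coefficient gives, for any vector $v$,
\[ v^{*} M v \;=\; \frac{1}{|G|}\sum_{x\in G} f(x)\,\Big|\sum_{\chi\in\hat{G}} v_{\chi}\,\chi(x)\Big|^{2} \;\geq\; 0, \]
so $M\psd 0$ as soon as $f\ge 0$. Moreover, since $\hat{f}$ vanishes outside $\cS$, the entry $M_{\chi,\chi'}$ is zero whenever $\chi'\chi^{-1}\notin\cS$; hence the sparsity pattern of $M$ is contained in $\Cay(\hat{G},\cS)$ (together with the diagonal), and therefore inside the chordal cover $\Gamma$.

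Next I would invoke as a black box the classical chordal PSD decomposition theorem of Grone-Johnson-S\'a-Wolkowicz / Agler et al.: since $\Gamma$ contains the sparsity graph of $M$, we may write $M=\sum_{\cC} M_{\cC}$ with each $M_{\cC}\psd 0$ supported on $\cC\times\cC$, as $\cC$ ranges over the maximal cliques of $\Gamma$. The dictionary of the first step then converts each $M_{\cC}$ into a partial sum of squares $g_{\cC}=\sum_j |f_{\cC,j}|^{2}$ with every $f_{\cC,j}$ Fourier-supported on $\cC$, and these combine to give $f=\sum_{\cC} g_{\cC}$ as an SOS certificate of functions supported in $\bigcup_{\cC}\cC$.

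The final ingredient---the ``simple but key observation'' highlighted in the abstract---is the character-translation trick: because $|\chi_{\cC}(x)|=1$ for every $x\in G$, one has $|\chi_{\cC} f_{\cC,j}|^{2}=|f_{\cC,j}|^{2}$ pointwise, while the Fourier support of $\chi_{\cC} f_{\cC,j}$ shifts from $\cC$ to $\chi_{\cC}\cC$. Replacing every $f_{\cC,j}$ by $\chi_{\cC} f_{\cC,j}$ in the SOS for $g_{\cC}$ and summing over all maximal cliques expresses $f$ as a sum of squares of functions all Fourier-supported in $\cT(\Gamma,\{\chi_{\cC}\})=\bigcup_{\cC}\chi_{\cC}\cC$. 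The main technical step is the chordal PSD decomposition, which I would cite off the shelf; the remaining challenge is the bookkeeping needed to identify the canonical matrix $M$ and to track how Fourier supports transform under autocorrelation and character multiplication.
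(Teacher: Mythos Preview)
Your proposal is correct and follows essentially the same route as the paper: build the canonical PSD Gram matrix $M_{\chi,\chi'}=\hat{f}(\bar\chi\chi')$ whose sparsity lies in $\Cay(\hat G,\cS)\subseteq\Gamma$, apply the chordal PSD decomposition to split it over maximal cliques, and then use the $|\chi_{\cC}|=1$ translation trick to move each clique's support into $\chi_{\cC}\cC$. The only cosmetic point is a normalization: with your $M$ the autocorrelation identity gives $\sum_{\chi'\chi^{-1}=\psi} M_{\chi,\chi'}=|G|\,\hat f(\psi)$, so the pieces sum to $|G|f$ rather than $f$, which is harmless (the paper carries the factor $1/|G|$ explicitly).
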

Theorem \ref{thm:intro-main-sos} gives a way to construct a set $\cT$ that satisfies the condition in \eqref{eq:questionQ} for a given $\cS\subseteq \hat{G}$. Such a construction proceeds in two steps: first choose a chordal cover $\Gamma$ of the graph $\Cay(\hat{G},\cS)$, and then choose elements $\chi_{\cC} \in \hat{G}$ for each maximal clique $\cC$ of $\hat{G}$. Different choices of $\Gamma$ and $\{\chi_{\cC}\}$ will in general lead to different sets $\cT(\Gamma,\{\chi_{\cC}\})$. When using Theorem \ref{thm:intro-main-sos}, one wants to find a good choice of $\Gamma$ and $\{\chi_{\cC}\}$ such that the resulting set $\cT(\Gamma,\{\chi_{\cC}\})$ is as small as possible (or has other desirable properties).

One of the main strengths of Theorem \ref{thm:intro-main-sos} is in the ability to choose the elements $\{\chi_{\cC}\}$. In fact the conclusion of Theorem \ref{thm:intro-main-sos} is almost trivial if $\chi_{\cC} = 1_{\hat{G}}$ for all $\cC$, since in this case it simply says that any nonnegative function has a sum-of-squares certificate supported on $\hat{G}$, which is easy to see since $G$ is finite. As we will see in the applications, it is the ability to \emph{translate} the cliques $\cC$ of $\Gamma$ via the choice of $\chi_{\cC}$ that is key in Theorem \ref{thm:intro-main-sos} and allows us to obtain interesting results.
Equation \eqref{eq:TGammachiC} gives us the intuition behind a good choice of $\{\chi_{\cC}\}$: in order to minimize the cardinality of $\cT(\Gamma,\{\chi_{\cC}\})$ one would like to find the translations $\chi_{\cC}$ that maximize the total overlap of the cliques (i.e., minimize the cardinality of their union).

Before describing the main idea behind Theorem \ref{thm:intro-main-sos} and its proof, we illustrate how one can use Theorem \ref{thm:intro-main-sos} in two important special cases, namely $G = \ZZ_2^n$ (the boolean hypercube) and $G=\ZZ_N$.

\begin{itemize}
\item \textbf{Boolean hypercube}: Consider the case $G = \{-1,1\}^n\cong \ZZ_2^n$. The Fourier expansion of functions on $\{-1,1\}^n$ take the form 
\begin{equation}
\label{eq:intro-cube-Fourier}
f(x) = \sum_{S \subseteq [n]} \hat{f}(S) \prod_{i \in S} x_i.
\end{equation}
A function $f$ is said to have degree $d$ if $\hat{f}(S) = 0$ for all $S$ such that $|S| > d$. Many combinatorial optimization problems correspond to optimizing a certain function $f$ over $\{-1,1\}^n$. For example the maximum cut problem in graph theory consists in optimizing a \emph{quadratic function} over $\{-1,1\}^n$. In \cite{laurent2003lower} Laurent conjectured that any nonnegative quadratic function on the hypercube is a sum of squares of functions of degree at most $\lceil n/2 \rceil$. Using our notations, this corresponds to asking whether for $\cS = \{S \subseteq [n] : |S| = 0 \text{ or } 2\}$ one can find $\cT \subseteq \{S \subseteq [n] : |S| \leq \lceil n/2 \rceil\}$ such that the conclusion of Theorem \ref{thm:intro-main-sos} holds. By studying chordal covers of the Cayley graph $\Cay(\hat{G},\cS)$ we are able to answer this question positively:
\begin{thm}
\label{thm:laurent}
Any nonnegative quadratic function on $\{-1,1\}^n$ is a sum-of-squares of polynomials of degree at most $\lceil n/2 \rceil$.
\end{thm}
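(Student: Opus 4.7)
The plan is to invoke Theorem~\ref{thm:intro-main-sos} for $G = \ZZ_2^n$ with the identification $\hat G \cong 2^{[n]}$, taking $\cS = \{S \subseteq [n] : |S| = 0 \text{ or } 2\}$; under the Fourier expansion~\eqref{eq:intro-cube-Fourier} this is precisely the Fourier support of an arbitrary quadratic function on $\{-1,1\}^n$. Since the group law on $\hat G$ is symmetric difference, $\Cay(\hat G, \cS)$ has an edge between $S$ and $T$ whenever $|S \triangle T| = 2$; this graph splits into two components by parity of $|S|$, each isomorphic to the half-cube graph. The theorem then requires producing a chordal cover $\Gamma$ of $\Cay(\hat G, \cS)$ and, for every maximal clique $\cC$ of $\Gamma$, a translation $\chi_\cC \in \hat G$ such that $\cT(\Gamma, \{\chi_\cC\}) \subseteq \binom{[n]}{\leq \lceil n/2 \rceil}$.

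The key reduction is that $\chi_\cC \cdot \cC \subseteq \binom{[n]}{\leq \lceil n/2 \rceil}$ is equivalent to asking that $\cC$ be contained in a Hamming ball of radius $\lceil n/2 \rceil$ around $\chi_\cC$, viewing $\hat G$ as $\{0,1\}^n$ with Hamming distance. So the combinatorial task becomes: build a chordal cover of the half-cube graph in which every maximal clique has Hamming radius at most $\lceil n/2 \rceil$. This is nontrivial because the half-cube graph is not chordal---for example, $\emptyset, \{1,2\}, \{1,2,3,4\}, \{3,4\}$ is a chordless $4$-cycle whenever $n \geq 4$.

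The construction I would pursue uses \emph{parity-restricted Hamming balls} as cliques: for a carefully chosen family $\cF \subseteq 2^{[n]}$ of ``centers'', declare
\[
\cC_y = \{x \in 2^{[n]} : |x \triangle y| \leq \lceil n/2 \rceil,\ |x| \equiv |y| \pmod{2}\}
\]
to be a clique of $\Gamma$ for each $y \in \cF$, and set $\chi_{\cC_y} = y$. Translating $\cC_y$ by $y$ then lands inside $\binom{[n]}{\leq \lceil n/2 \rceil}$ automatically. Every edge of $\Cay(\hat G, \cS)$ is covered as long as, for each pair $S, T$ with $|S \triangle T| = 2$, some $y \in \cF$ satisfies $|S \triangle y|, |T \triangle y| \leq \lceil n/2 \rceil$; this is easy to arrange provided $\cF$ is sufficiently rich.

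The main obstacle is selecting $\cF$ so that the resulting $\Gamma$ is chordal and so that each of its maximal cliques remains inside some $\cC_y$ and hence inside a Hamming ball of radius $\lceil n/2 \rceil$. I would approach this by exhibiting an explicit perfect elimination ordering on $2^{[n]}$---for instance by ordering vertices first by distance to a fixed reference vertex and then lexicographically---and verifying that each vertex's higher neighbors in $\Gamma$ form a clique. An equivalent route is to construct a tree decomposition whose bags are indexed by elements of $\cF$ and check the running intersection property. This combinatorial verification is where the real work lies; once chordality is established, Theorem~\ref{thm:intro-main-sos} immediately delivers the claimed SOS certificate of degree at most $\lceil n/2 \rceil$, proving Theorem~\ref{thm:laurent}.
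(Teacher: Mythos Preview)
Your reduction is correct: you need a chordal cover of $\Cay(\hat G,\cS)$ whose maximal cliques each sit inside some Hamming ball of radius $\lceil n/2\rceil$. But the proposal stops exactly where the content begins. You never specify $\cF$, never verify chordality, and never check that the maximal cliques of the resulting $\Gamma$ stay inside the balls $\cC_y$ (a priori, taking a union of cliques can create strictly larger maximal cliques). As stated, the construction is not yet a proof. Note also that your intended building blocks are not obviously the right ones: for instance when $n=10$, the two parity-restricted balls of radius $5$ around $\emptyset$ and $[n]$ are $\cT_0\cup\cT_2\cup\cT_4$ and $\cT_6\cup\cT_8\cup\cT_{10}$, which miss every edge between $\cT_4$ and $\cT_6$; so even covering the Cayley edges already forces a nontrivial choice of $\cF$, and the chordality and maximal-clique control on top of that are further genuine obligations.

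The paper's construction is different in kind and avoids these difficulties. Rather than Hamming balls around scattered centers, it takes \emph{level slices}: on the even component, connect $S$ and $T$ whenever $\bigl||S|-|T|\bigr|\le 2$, so that the maximal cliques are $\cC_k=\cT_k\cup\cT_{k+2}$ for $k=0,2,\ldots$ (and the odd component is handled via the automorphism $S\mapsto\{1\}\triangle S$). Chordality is then almost immediate, since adjacency depends only on $|S|$ and the induced structure on levels is that of an interval graph. The translations are just $\chi_{\cC_k}=\emptyset$ for small $k$ and $\chi_{\cC_k}=[n]$ (or $[n]\setminus\{1\}$) for large $k$, using $[n]\triangle\cT_k=\cT_{n-k}$ to reflect high levels down below $\lceil n/2\rceil$. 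So only two centers per component are ever needed, and every step is explicit. Your Hamming-ball picture is compatible with this outcome (each $\cC_k$ does lie in a ball of radius $\lceil n/2\rceil$ around $\emptyset$ or $[n]$), but the route to it via level slices is what makes the argument go through.
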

Note that Blekherman et al. \cite{blekherman2014sums} previously showed a weaker version of the conjecture that allows for multipliers: They showed that for any nonnegative quadratic function $f$ on the hypercube, there exists $h$ sum-of-squares such that $h(x)f(x)$ is a sum-of-squares of polynomials of degree at most $\lceil n/2 \rceil$.


\item \textbf{Trigonometric polynomials}: Another important application that we consider in this paper is the case where $G = \ZZ_N$, the (additive) group of integers modulo $N$. The Fourier decomposition of a function $f:\ZZ_N\rightarrow \CC$ is the usual discrete Fourier transform and takes the form:
\begin{equation}
\label{eq:intro-ZN-Fourier}
f(x) = \sum_{k \in \ZZ_N} \hat{f}(k) e^{2i\pi kx/N}
\end{equation}
where $\hat{f}(k)$ are the Fourier coefficients of $f$. A function $f$ is said to have degree $d$ if $\supp f \subseteq \{-d,-(d-1),\dots,d-1,d\}$. Nonnegative trigonometric polynomials play an important role in many areas such as in signal processing \cite{dumitrescu2007positive}, but also in convex geometry \cite{ziegler1995lectures,alexander2002course}, in their relation to (trigonometric) cyclic polytopes. 
We are interested in nonnegative functions on $G$ of degree at most $d$, i.e., functions supported on $\cS = \{-d,-(d-1),\dots,d-1,d\}$. By studying chordal covers of $\Cay(\hat{G},\cS)$ (which is nothing but the $d$'th power of the cycle graph) and using Theorem \ref{thm:intro-main-sos} we are able to show the following:
\begin{thm}
\label{thm:intro-ZN-degd}
Let $N$ and $d$ be two integers and assume that $d$ divides $N$. Then there exists $\cT \subseteq \ZZ_N$ with $|\cT| \leq 3 d \log (N/d)$ such that any nonnegative function on $\ZZ_N$ of degree at most $d$ has a sum-of-squares certificate supported on $\cT$.
\end{thm}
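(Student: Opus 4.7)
The plan is to apply Theorem~\ref{thm:intro-main-sos} with $G=\ZZ_N$, identifying $\hat G\cong\ZZ_N$ via $\chi_k(x)=e^{2\pi ikx/N}$, and taking $\cS=\{-d,-(d-1),\ldots,d\}\subseteq\ZZ_N$. Under this identification the Cayley graph $\Cay(\hat G,\cS)$ is the $d$th power $C_N^d$ of the $N$-cycle, so the task reduces to exhibiting a chordal cover $\Gamma$ of $C_N^d$ together with clique translations $\{\chi_\cC\}$ satisfying $|\cT(\Gamma,\{\chi_\cC\})|\leq 3d\log(N/d)$.

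Since $d$ divides $N$, set $m=N/d$ and partition $\ZZ_N$ into the consecutive blocks $B_i=\{id,id+1,\ldots,id+d-1\}$ for $i=0,\ldots,m-1$. The construction proceeds in two stages. First, I would build a dyadic chordal cover $\Gamma_0$ of the coarse cycle $C_m$: in addition to the cycle edges, for each scale $\ell=1,\ldots,\lceil\log_2 m\rceil$ add the chords joining consecutive multiples of $2^\ell$. One checks that $\Gamma_0$ is chordal (it is a triangulation of the $m$-gon), and that at scale $\ell$ its triangles all have the canonical shape $\{a,a+2^{\ell-1},a+2^\ell\}$, up to a bounded number of boundary shapes per scale when $m$ is not a power of two. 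Second, I would blow up $\Gamma_0$ to a graph $\Gamma$ on $\ZZ_N$ by replacing each super-vertex $i$ with its block $B_i$ and inserting all intra-block edges together with all edges between blocks whose super-vertices are adjacent in $\Gamma_0$. A perfect elimination ordering of $\Gamma_0$ lifts to one of $\Gamma$ by processing the vertices of each block consecutively in the order dictated by $\Gamma_0$, so $\Gamma$ is chordal. Since every $C_N^d$-edge either stays inside a block or crosses between two adjacent blocks of $\ZZ_N$, the graph $\Gamma$ contains $C_N^d$ and is therefore a chordal cover.

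Each maximal clique of $\Gamma$ has the form $B_a\cup B_{a+r_1}\cup B_{a+r_2}$ with $(r_1,r_2)=(2^{\ell-1},2^\ell)$ coming from the parent triangle. For each such clique I would choose the translation $\chi_\cC=-ad\in\ZZ_N$, which shifts it to $B_0\cup B_{r_1}\cup B_{r_2}$, a set depending only on $\ell$. The resulting union $\cT(\Gamma,\{\chi_\cC\})$ is then contained in $\bigcup_{\ell=1}^{\lceil\log_2 m\rceil}(B_0\cup B_{2^{\ell-1}}\cup B_{2^\ell})$, a union of at most $3\lceil\log_2 m\rceil$ blocks of size $d$, yielding the bound $|\cT|\leq 3d\log(N/d)$.

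The main obstacle will be accommodating the case when $m=N/d$ is not a power of two: the dyadic triangulation then produces boundary triangles with non-canonical shapes, and the number of distinct translated block positions must still be shown to be at most $3\log(N/d)$. In the clean power-of-two case the sharing of blocks across adjacent levels yields the much tighter count $|\cT|\leq d(\log_2(N/d)+1)$, so the factor $3$ in the theorem is the conservative constant that absorbs the boundary behaviour (one may, for instance, embed $\ZZ_N$ into $\ZZ_{N'}$ with $N'/d$ the next power of two, at the cost of only a constant factor).
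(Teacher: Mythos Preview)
Your proposal is correct and follows essentially the same route as the paper. What you call the ``blow-up'' of $\Gamma_0$ by blocks $B_i$ is exactly the paper's strong product $\Gamma_0\boxtimes K_d$ (Proposition~\ref{prop:chordalproducts} and Proposition~\ref{prop:freqCNd}), and your dyadic triangulation of $C_m$ is the paper's recursive construction in Theorem~\ref{thm:maincycle}; the only difference is that the paper handles the non-power-of-two case by directly tracking the frequency set through the recursion (obtaining the explicit formula~\eqref{eq:freqsos}), rather than by padding to the next power of two as you suggest at the end.
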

\begin{rem}
Note that if one is interested in functions of degree at most $d$ on $\ZZ_N$ and $d$ does not divide $N$, then one can still apply Theorem \ref{thm:intro-ZN-degd} with $d'$ instead of $d$, where $d'$ is the smallest divisor of $N$ that is greater than $d$.
\end{rem}
\end{itemize}

\paragraph{Dual point of view and moment polytopes} Theorem \ref{thm:intro-main-sos} can be interpreted from the dual point of view as giving a semidefinite programming description of certain moment polytopes. If $\cS \subseteq \hat{G}$, define the \emph{moment polytope} $\cM(G,\cS)$ to be the set of $\cS$-moments of probability distributions on $G$, i.e.,
\[
\cM(G,\cS) = \Bigl\{ \left(\EE_{x\sim \mu}\bigl[\chi(x)\bigr]\right)_{\chi\in \cS}\in \CC^{\cS}: \textup{$\mu$ a probability measure supported on $G$}\Bigr\}.
\]
Note that $\cM(G,\cS)$ is a polytope since it can be equivalently expressed as:
\[ \cM(G,\cS) = \conv \Bigl\{(\chi(x))_{\chi\in \cS}\in \CC^{\cS}: x\in G\Bigr\}. \]
Note that from a geometric point of view, nonnegative functions $f:G\rightarrow \RR_+$ supported on $\cS$ correspond to valid linear inequalities for the polytope $\cM(G,\cS)$. By giving a sum-of-squares characterization for all valid inequalities of $\cM(G,\cS)$ Theorem \ref{thm:intro-main-sos} allows us to obtain a semidefinite programming description of $\cM(G,\cS)$. The following statement can be obtained from Theorem \ref{thm:intro-main-sos} by duality (we call this result ``Theorem 1D'' to reflect that it is a dual version of ``Theorem 1''--we adopt this numbering convention throughout the paper):
\begin{dualthm}{thm:intro-main-sos}
\label{thm:intro-main-moments}
Let $\cS \subseteq \hat{G}$ and let $\cT = \cT(\Gamma,\{\chi_{\cC}\})$ be as defined in Theorem \ref{thm:intro-main-sos}. Then we have the following semidefinite programming description of the moment polytope $\cM(G,\cS)$:
\begin{equation}
\label{eq:intro-Mpsdlift}
\begin{aligned}
\cM(G,\cS) 
&= \begin{aligned}[t]\Bigl\{(\ell_\chi)_{\chi\in \cS} \;\; : \;\; \exists (y_{\chi})_{\chi\in \cT^{-1}\cT}\;\;\text{such that}\;\;
            & y_\chi = \ell_\chi \text{ for all $\chi\in \cS$ }, \text{ and }\\
    &y_{1_{\hat{G}}}=1, \text{ and } \bigl[y_{\bar{\chi}\chi'}\bigr]_{\chi,\chi'\in \cT} \psd 0\Bigr\}.
\end{aligned}
\end{aligned}
\end{equation}
\end{dualthm}
In terms of positive semidefinite lifts, Equation \eqref{eq:intro-Mpsdlift} shows that $\cM(G,\cS)$ has a \emph{Hermitian positive semidefinite lift} of size $|\cT|$. We now illustrate this dual point of view for the two applications mentioned above, $G = \{-1,1\}^n$ and $G = \ZZ_N$:

\begin{itemize}
\item For the case of the boolean hypercube $G =\{-1,1\}^n$, if $\cS = \{S \subseteq [n] : |S| = 0 \text{ or } 2\}$, the moment polytope $\cM(\{-1,1\}^n,\cS\setminus\{\emptyset\})$ is nothing but the \emph{cut polytope} for the complete graph on $n$ vertices which we denote by $\CUT_n$:
\[
\CUT_n = \conv \Bigl\{ (x_i x_j)_{i < j} \in \RR^{\binom{n}{2}} : x \in \{-1,1\}^n \Bigr\}.
\]
From the dual point of view, Theorem \ref{thm:laurent} shows that the $\lceil n/2 \rceil$ level of the Lasserre hierarchy for the cut polytope is exact. This bound is tight since Laurent showed in \cite{laurent2003lower} that at least $\lceil n/2 \rceil$ levels are needed.

\begin{dualthm}{thm:laurent}
\label{thm:intro-laurent-dual}
The $\lceil n/2 \rceil$ level of the Lasserre hierarchy for the cut polytope $\CUT_n$ (as considered in \cite{laurent2003lower}) is exact.
\end{dualthm}

\item Consider now the case $G = \ZZ_N$ and $\cS = \{-d,-(d-1),\dots,d-1,d\}$. Here the moment polytope $\cM(G,\cS)$ is the \emph{trigonometric cyclic polytope} of degree $d$ which we denote by $TC(N,2d)$:
\begin{equation}
\label{eq:intro-TC}
 TC(N,2d) = \conv \Bigl\{ M(2\pi x/N) : x=0,1,\dots,N-1 \Bigr\} \subset \RR^{2d},
\end{equation}
where $M(\theta)$ is the degree $d$ trigonometric moment curve:
\[ M(\theta) = \Bigl( \cos (\theta), \sin (\theta), \cos(2\theta), \sin(2\theta), \dots, \cos(d\theta), \sin(d\theta) \Bigr).
\]
When interpreted from the dual point of view, Theorem \ref{thm:intro-ZN-degd} shows that $TC(N,2d)$ has a Hermitian positive semidefinite lift of size at most $3d \log (N/d)$.
\begin{dualthm}{thm:intro-ZN-degd}
\label{thm:intro-TClift}
Let $N$ and $d$ be two integers and assume that $d$ divides $N$. The trigonometric cyclic polytope $TC(N,2d)$ defined in \eqref{eq:intro-TC} has a Hermitian positive semidefinite lift of size at most $3d\log (N/d)$.
\end{dualthm}
Note that in the case $d=1$ the polytope $TC(N,2d)$ is nothing but the regular $N$-gon in the plane. Theorem \ref{thm:intro-TClift} thus recovers, and extends to the case where $N$ is not a power of two, a result from \cite{polygonsequivariant} giving a semidefinite lift of the regular $N$-gon of size $O(\log N)$.

For $d > 1$ our result is, as far as we are aware, the first nontrivial semidefinite programming lift of a cyclic polytope. Furthermore, in the regime where $N=d^2$ our lift is provably smaller than any linear programming lift: Indeed, since $TC(d^2,2d)$ is $d$-neighborly \cite{gale1963neighborly}, a lower bound from \cite{fiorini2013combinatorial} concerning neighborly polytopes shows that any linear programming lift of $TC(d^2,2d)$ must have size at least $\Omega(d^2)$, whereas our semidefinite programming lift in this case has size $O(d\log d) = o(d^2)$. To the best of our knowledge this gives the first example of a family of polytopes $(P_d)_{d \in \NN}$ in increasing dimensions where $\xcPSD(P_d) = o(\xcLP(P_d))$ where $\xcPSD$ and $\xcLP$ are respectively the SDP and LP extension complexity (see Section \ref{sec:lifts} for the definitions). More precisely, we have:
\begin{cor}
\label{cor:LPSDPgap}
There exists a family $(P_d)_{d \in \NN}$ of polytopes where $P_d \subset \RR^{2d}$ such that 
\[ \frac{\xcPSD(P_d)}{\xcLP(P_d)} = O\left( \frac{\log d}{d} \right). \]
\end{cor}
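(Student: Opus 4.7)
}
The plan is to take $P_d$ to be the trigonometric cyclic polytope $TC(d^2,2d)\subset\RR^{2d}$ and read off both the upper bound on $\xcPSD(P_d)$ and the lower bound on $\xcLP(P_d)$ from results already in the excerpt. Since $d$ divides $d^2$, Theorem \ref{thm:intro-TClift} applies directly with $N=d^2$, yielding a Hermitian positive semidefinite lift of size at most $3d\log(d^2/d)=3d\log d$. The only mild technicality here is that $\xcPSD$ is conventionally defined over real symmetric PSD cones; I would address this by the standard trick of representing an $k\times k$ Hermitian PSD matrix as a $2k\times 2k$ real symmetric PSD matrix (via $A+iB\mapsto\left[\begin{smallmatrix}A & -B\\ B & A\end{smallmatrix}\right]$), which at worst doubles the lift size and so keeps $\xcPSD(P_d)=O(d\log d)$.

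For the LP lower bound I would invoke the two ingredients noted just before the corollary. First, the cyclic polytope on the trigonometric moment curve of dimension $2d$ with any number of vertices is $d$-neighborly; this is the classical result of Gale cited as \cite{gale1963neighborly}, so $TC(d^2,2d)$ has $d^2$ vertices and every $d$-subset of vertices forms a face. Second, the Fiorini--Massar--Pokutta--Tiwary--de Wolf counting/rectangle bound for neighborly polytopes \cite{fiorini2013combinatorial} forces any LP lift of a $d$-neighborly polytope with $v$ vertices to have size $\Omega(\sqrt{\binom{v}{d}}^{\,1/d})$-type growth; specializing to $v=d^2$ one gets $\xcLP(P_d)=\Omega(d^2)$, as the authors assert in the paragraph preceding the corollary.

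Combining the two estimates gives
\[
\frac{\xcPSD(P_d)}{\xcLP(P_d)} \;\leq\; \frac{O(d\log d)}{\Omega(d^2)} \;=\; O\!\left(\frac{\log d}{d}\right),
\]
which is exactly the claim. The statement then follows by taking $d\to\infty$ along any increasing sequence of positive integers. There is no real obstacle beyond assembling the right pieces: the substantive content is in Theorem \ref{thm:intro-TClift} (and hence in Theorem \ref{thm:intro-main-sos}), and in the previously-known neighborliness and extension-complexity lower bound. The only place where some care is needed is in verifying that the Hermitian-to-real conversion preserves the $O(d\log d)$ asymptotic, which it clearly does up to a factor of $2$.
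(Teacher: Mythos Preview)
Your proposal is correct and follows exactly the paper's own argument: take $P_d = TC(d^2,2d)$, use Theorem~\ref{thm:intro-TClift} with $N=d^2$ for the $O(d\log d)$ PSD upper bound, and combine Gale's $d$-neighborliness of cyclic polytopes with the Fiorini et al.\ lower bound for the $\Omega(d^2)$ LP lower bound. One small remark: the precise form of the Fiorini et al.\ bound used in the paper is $\xcLP(P)\geq \min(N,(k+1)(k+2)/2)$ for a $k$-neighborly polytope with $N$ vertices (not the expression you sketched), and in this paper $\xcPSD$ is defined via Hermitian lifts, so the real-versus-Hermitian conversion is not actually needed---though your doubling argument is of course valid and harmless.
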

The only nontrivial linear programming lift for cyclic polytopes that we are aware of is a construction by Bogomolov et al. \cite{bogomolov2014small} for the polytope $\conv \{ (i,i^2,\dots,i^d) : i=1,\dots,N \}$
which has size $(\log N)^{\lfloor d/2 \rfloor}$.
\end{itemize}

\paragraph{Main ideas} We now briefly describe the main ideas behind Theorem \ref{thm:intro-main-sos}, which can be summarized in three steps:
\begin{enumerate}
\item \emph{A sum-of-squares certificate with a sparse Gram matrix}: Given a nonnegative function $f:G\rightarrow \RR_+$ it is easy to see, since $G$ is finite, that $f$ can be written as a sum-of-squares. When the function $f$ is supported on $\cS$, one can show that $f$ admits a specific sum-of-squares representation where the \emph{Gram matrix} $Q$, in the basis of characters, is sparse according to the graph $\Cay(\hat{G},\cS)$.
\item \emph{Chordal completion}: Let $\Gamma$ be a chordal cover of the graph $\Cay(\hat{G},\cS)$. Using well-known results concerning positive semidefinite matrices that are sparse according to a chordal graph \cite{griewank1984existence,grone1984positive}  (see Section \ref{sec:chordal} for more details) one can decompose the Gram matrix $Q$ into a sum of positive-semidefinite matrices, where each matrix is supported on a maximal clique of $\Gamma$. In terms of sum-of-squares representation, this means that the function $f$ can be written as:
\begin{equation}
 \label{eq:intro-sos-decomp1}
 f = \sum_{j} |f_j|^2
\end{equation}
where each $f_j$ is supported on a maximal clique $\cC_j$ of $\Gamma$.
\item \emph{Translation of cliques}: The problem with the decomposition \eqref{eq:intro-sos-decomp1} is that even though each maximal clique $\cC_j$ might be small, the union of the $\cC_j$'s might be large, and thus the total support of \eqref{eq:intro-sos-decomp1} might be large (in fact the union of the $\cC_j$ is the whole $\hat{G}$). In order to reduce the total support of the sum-of-squares certificate \eqref{eq:intro-sos-decomp1}, we use the following simple but crucial observation: if $h$ is a function supported on $\cC$ and if $\chi \in \hat{G}$ then $\chi h$ is supported on $\chi \cC$ and we have $|\chi h|^2 = |h|^2$. Thus if for each maximal clique $\cC_j$ of $\Gamma$ we choose a certain $\chi_{j} \in \hat{G}$ then, by translating each term in \eqref{eq:intro-sos-decomp1} by $\chi_{j}$ we obtain a sum-of-squares representation of $f$ of the form $f = \sum_{j} |\tilde{h}_j|^2$ where $\tilde{h}_{j}$ is supported in $\chi_{j} \cC_j$. Having chosen the $\chi_{j}$ such that $\chi_{j} \cC_j \subseteq \cT$ for all maximal cliques $\cC_j$ (cf.  Theorem \ref{thm:intro-main-sos}), we get a representation of $f$ as a sum-of-squares of functions supported on $\cT$.
\end{enumerate}

\paragraph{Organization} The paper is organized as follows. Section \ref{sec:prelim} starts by giving a brief review of Fourier analysis of finite abelian groups, as well as a review of chordal graphs, chordal covers and the main results concerning decomposition/matrix completion with chordal sparsity structure \cite{griewank1984existence,grone1984positive}.
In Section \ref{sec:general} we prove our main result, Theorem \ref{thm:intro-main-sos}. We present the proof using the two dual viewpoints of sum-of-squares certificates and in terms of moment polytopes.
In Section \ref{sec:maxcut} we look at the case of the hypercube $G=\{-1,1\}^n$ mentioned earlier, and we look in particular at quadratic functions on the hypercube. We give an explicit chordal cover for the corresponding Cayley graph and we show how it leads to a proof of Laurent's conjecture.
In Section \ref{sec:cyclic} we look at the special case $G=\ZZ_N$ and functions of degree $d$. We give an explicit chordal cover for the corresponding graphs, and we discuss the consequences concerning positive semidefinite lifts of the trigonometric cylic polytope.

\paragraph{Notations} We collect some of the notations used in the paper. If $z \in \CC$ we denote by $\bar{z}$ the complex conjugate of $z$. Given a square matrix $X \in \CC^{n\times n}$ the Hermitian conjugate of $X$ is denoted $X^{*}$, and $X$ is called Hermitian if $X^* = X$. The space of $n\times n$ Hermitian matrices is denoted $\HH^n$ and the cone of Hermitian positive semidefinite matrices is denoted by $\HH^n_+$. Similarly we denote by $\S^n$ the space of $n\times n$ real symmetric matrices and by $\S^n_+$ the cone of $n\times n$ real symmetric positive semidefinite matrices. If $V$ is an arbitrary set, we will denote by $\CC^V$ the space of complex vectors indexed by elements of $V$, and by $\HH^V$ the space of Hermitian matrices where rows and columns are indexed by elements of $V$ (and similarly for $\HH^V_+$ and $\S^V, \S^V_+$).

\section{Preliminaries}
\label{sec:prelim}

In this section we present some background material needed for the paper: we first recall some of the basic results and terminology concerning Fourier analysis on finite abelian groups \cite{rudin1990fourier,terras1999fourier}, then we review the definition of chordal graph and the main results concerning sparse positive semidefinite matrices and matrix completion. We also review some of the terminology concerning lifts of polytopes/extended formulations.

\subsection{Fourier analysis on finite groups}

Let $G$ be a finite abelian group which we denote multiplicatively, and let $\cF(G,\CC)$ be the vector space of complex-valued functions on $G$. A \emph{character} $\chi$ of $G$ is a \emph{group homomorphism} $\chi : G\rightarrow (\CC^*,\times)$, i.e., it is an element of $\cF(G,\CC)$ which satisfies:
\[ \chi(xy) = \chi(x) \chi(y) \;\; \forall x,y \in G. \]
Since $G$ is abelian, one can easily show that the (pointwise) product of two characters is a character and that the (pointwise) inverse of a character is again a character. Thus if we denote by $\hat{G}$ the set of characters of $G$, then $\hat{G}$ forms an abelian group, where the group operation corresponds to pointwise multiplication. The group $\hat{G}$ is known as the \emph{dual group} of $G$.
Observe that since $G$ is finite, if $\chi$ is a character then for any $x \in G$ we have $\chi(x)^{|G|} = \chi(x^{|G|}) = \chi(1_G) = 1$, which implies that $|\chi(x)|=1$. It follows that the inverse of a character $\chi$ is simply its (pointwise) complex conjugate $\bar{\chi}$.

Consider the standard inner product on $\cF(G,\CC)$:
\begin{equation}
 \label{eq:ip}
 \langle f, g \rangle = \frac{1}{|G|} \sum_{x \in G} \overline{f(x)} g(x) \quad \forall f, g\in\cF(G,\CC).
\end{equation}
A crucial property of the set of characters $\hat{G}$ is that they form an orthonormal basis of $\cF(G,\CC)$, which is called the \emph{Fourier basis} of $G$. Note that this implies in particular that $|\hat{G}| = |G|$. We summarize this in the following theorem:

\begin{thm}
\label{thm:charactersONB}
Let $G$ be a finite abelian group and let $\hat{G}$ be the set of characters of $G$. Then $\hat{G}$ is an abelian group with pointwise multiplication. Furthermore $|\hat{G}|=|G|$ and $\hat{G}$ forms an orthonormal basis of $\cF(G,\CC)$ for the standard inner product \eqref{eq:ip}.
\end{thm}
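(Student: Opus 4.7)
The plan is to verify the three claims of the theorem in order: the group structure of $\hat{G}$, the orthonormality of the characters, and the equality $|\hat{G}|=|G|$ (after which the completeness of the Fourier basis follows immediately from a dimension count).

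For the group structure, I would simply observe that if $\chi,\psi$ are characters, then the pointwise product $\chi\psi$ satisfies $(\chi\psi)(xy) = \chi(x)\chi(y)\psi(x)\psi(y) = (\chi\psi)(x)(\chi\psi)(y)$ using commutativity in $\CC^*$, hence it is a character; the trivial character $1_{\hat G}:x\mapsto 1$ is the identity, and as noted in the excerpt the complex conjugate $\bar\chi$ is a two-sided inverse since $|\chi(x)|=1$. Commutativity of $\hat{G}$ is inherited from $\CC^*$.

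For orthonormality, I would use the standard averaging argument. If $\chi=\psi$ then $\langle\chi,\chi\rangle=\tfrac1{|G|}\sum_x|\chi(x)|^2=1$. For $\chi\ne\psi$, set $\phi=\bar\chi\psi$, which is a nontrivial character, so there exists $y\in G$ with $\phi(y)\ne 1$. Then the change of variables $x\mapsto yx$ yields
\[
\sum_{x\in G}\phi(x)=\sum_{x\in G}\phi(yx)=\phi(y)\sum_{x\in G}\phi(x),
\]
so $(1-\phi(y))\sum_x\phi(x)=0$ and hence $\langle\chi,\psi\rangle=\tfrac1{|G|}\sum_x\phi(x)=0$.

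It remains to show $|\hat G|=|G|$; once this is known, the characters form an orthonormal set of size $|G|$ in the $|G|$-dimensional space $\cF(G,\CC)$, and hence constitute an orthonormal basis. My plan for this counting step, which I expect to be the main obstacle, is to invoke the structure theorem for finite abelian groups, writing $G\cong \ZZ_{n_1}\times\cdots\times\ZZ_{n_k}$. For $\ZZ_n$ a direct check shows the characters are exactly the $n$ maps $x\mapsto e^{2\pi\ii kx/n}$ for $k=0,1,\dots,n-1$ (since a character is determined by its value on a generator, which must be an $n$-th root of unity). Moreover the dual of a direct product is naturally isomorphic to the direct product of the duals: a character of $G_1\times G_2$ restricts to a pair of characters on the factors, and any such pair extends uniquely by $\chi(x_1,x_2)=\chi_1(x_1)\chi_2(x_2)$. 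Iterating over the factors gives $|\hat G|=\prod_i n_i=|G|$, completing the proof.
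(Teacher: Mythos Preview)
Your proof is correct and follows the standard textbook argument. Note, however, that the paper does not actually prove this theorem: it is stated in the preliminaries as a well-known background result, with references to Rudin and Terras for details. So there is no ``paper's own proof'' to compare against.

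That said, your argument is exactly the classical one found in those references: verify the group axioms directly, use the translation trick to show that a nontrivial character sums to zero (giving orthogonality), and then count characters via the structure theorem $G\cong\ZZ_{n_1}\times\cdots\times\ZZ_{n_k}$ together with the compatibility $\widehat{G_1\times G_2}\cong\hat{G}_1\times\hat{G}_2$. Each step is sound, and the final dimension count correctly promotes the orthonormal set to a basis.
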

We now illustrate the previous theorem in the two examples $G = \{-1,1\}^n$ (the hypercube) and $G=\ZZ_N$ presented in the introduction.

\begin{example}[Fourier analysis on the hypercube]
Let $G = \{-1,1\}^n$ be the hypercube in dimension $n$ which forms a group of size $2^n$ under componentwise multiplication, isomorphic to $\ZZ_2^n$. Observe that if $S$ is a subset of $[n]$ then the function $\chi_S$ defined by:
\[
\chi_S : \{-1,1\}^n \rightarrow \CC^*, \quad \chi_S(x) = \prod_{i \in S} x_i
\]
satisfies $\chi_S(xy) = \chi_S(x) \chi_S(y)$, and thus is a character of $G$. For example $\chi_{\emptyset}$ is the constant function equal to 1, and $\chi_{[n]}$ is the function $\chi_{[n]}(x) = x_1\dots x_n$. One can show that these are all the characters of $G$, i.e., $\hat{G} = \{\chi_S, S \subseteq [n]\}$. Thus the decomposition of a function $f:\{-1,1\}^n\rightarrow \CC$ in the basis of characters takes the form:
\[ f(x) = \sum_{S \subseteq [n]} \hat{f}(S) \prod_{i \in S} x_i, \]
where $\hat{f}(S)$ are the Fourier coefficients of $f$.
\end{example}

\begin{example}[Fourier analysis on $\ZZ_N$]
\label{ex:charactersZN}
Let $N$ be an integer and consider the (additive) group $G = \ZZ_N$ of integers modulo $N$.
For $k \in \ZZ_N$, define $\chi_k$ by
\[ \chi_k:\ZZ_N \rightarrow \CC^*, \quad \chi_k(x) = e^{2i\pi k x / N}. \]
Note that $\chi_k$ satisfies $\chi_k(x+y) = \chi_k(x)\chi_k(y)$ and thus $\chi_k$ is a character of $\ZZ_N$. It is not hard to show that any character $\chi$ of $\ZZ_N$ actually must have the form $\chi = \chi_k$ for some $k \in \ZZ_N$. Thus the dual group $\hat{\ZZ_N}$ of $\ZZ_N$ is $\hat{\ZZ_N} = \{\chi_k, k \in \ZZ_N\}$. Note that $\chi_{k} \chi_{k'} = \chi_{k+k'}$ and $(\chi_k)^{-1} = \bar{\chi_k} = \chi_{-k}$, and thus $\hat{\ZZ_N}$ is isomorphic to $\ZZ_N$.
According to Theorem \ref{thm:charactersONB}, any function $f:\ZZ_N \rightarrow \CC$ can be decomposed in the basis of characters:
\[ f(x) = \sum_{k \in \ZZ_N} \hat{f}(k) e^{2i\pi k x/N} \quad \forall x \in \ZZ_N. \]
This decomposition is nothing but the well-known Fourier decomposition of discrete signals of length $N$.
\end{example}

For a general finite abelian group $G$, the Fourier decomposition of a function $f:G\rightarrow \CC$, in the orthonormal basis of characters takes the form:
\[
f(x) = \sum_{\chi \in \hat{G}} \hat{f}(\chi) \chi(x).
\]
The coefficients $\hat{f}(\chi)$ are the \emph{Fourier coefficients} of $f$. By orthonormality of the basis of characters, we have for any $\chi \in \hat{G}$:
\[
\hat{f}(\chi) = \langle \chi, f \rangle = \frac{1}{|G|} \sum_{x \in G} \overline{\chi(x)} f(x).
\]
The \emph{support} of a function $f$, denoted $\supp f$ is the set of characters $\chi$ for which $\hat{f}(\chi)\neq 0$:
\[ \supp f = \{ \chi \in \hat{G} : \hat{f}(\chi) \neq 0 \}. \]

%

\subsection{Chordal graphs and matrix completion}
\label{sec:chordal}

In this section we recall some of the main results concerning sparse matrix decomposition and matrix completion with a chordal sparsity structure. For more details, we refer the reader to \cite{grone1984positive,griewank1984existence} and \cite{agler1988positive}.

\paragraph{Chordal graphs} Let $\Gr = (V,E)$ be a graph.  The graph $\Gr$ is called \emph{chordal} if any cycle of length at least four has a chord. A \emph{chordal cover} (also called \emph{triangulation}) of $\Gr$ is a graph $\Gr'=(V,E')$ where $E \subset E'$ and where $\Gr'$ is chordal. Figure \ref{fig:chordal_graph} shows a non-chordal graph $\Gr$ on four vertices and a chordal cover $\Gr'$ of $\Gr$.
\begin{figure}[ht]
  \centering
  \includegraphics[scale=1]{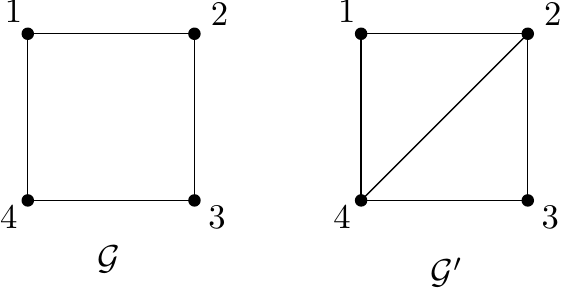}
  \caption{A non-chordal graph $\Gr$ and a chordal cover $\Gr'$ of $\Gr$.}
  \label{fig:chordal_graph}
\end{figure}

A subset $\cC \subseteq V$ is a \emph{clique} in $\Gr$ if $\{i,j\} \in E$ for all $i,j \in \cC$, $i\neq j$. The clique $\cC$ is called \emph{maximal} if it is not a strict subset of another clique $\cC'$ of $\Gr$. For example the maximal cliques of the graph $\Gr'$ shown in Figure \ref{fig:chordal_graph} are $\{1,2,4\}$ and $\{2,3,4\}$. 

\paragraph{Sparse matrices} 
Let $Q \in \HH^V$ be a Hermitian positive semidefinite matrix where rows and columns are indexed by some set $V$. Assume furthermore that $Q$ is sparse according to some graph $\Gr=(V,E)$, i.e.,
\[ Q_{ij} \neq 0, i\neq j \Rightarrow \{i,j\} \in E. \]
One of the main tools used in this paper is a result from \cite{griewank1984existence,grone1984positive} which allows to decompose sparse positive semidefinite matrices as a sum of positive semidefinite matrices supported on a small subset of rows/columns. We say that a Hermitian matrix $A$ is supported on $\cC \subseteq V$ if $A_{ij} = 0$ whenever $i \notin \cC$ or $j \notin \cC$. The result can be stated as follows:
\begin{thm} (\cite{griewank1984existence,grone1984positive})
\label{thm:sparsematrixdecomposition}
Let $Q$ be a Hermitian positive semidefinite matrix, and assume that $Q$ is sparse according to some graph $\Gr$. Assume furthermore that $\Gr$ is chordal. Then for every maximal clique $\cC$ of $\Gr$ there exists a Hermitian positive semidefinite matrix $Q_{\cC}$ supported on $\cC$ such that:
\begin{equation}
 \label{eq:Qdecomp}
 Q = \sum_{\cC} Q_{\cC}.
\end{equation}
\end{thm}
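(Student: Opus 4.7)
The plan is to prove the decomposition by induction on $|V|$, peeling off a simplicial vertex at each step via a rank-one Schur-complement extraction. I would first invoke the standard characterization of chordal graphs: $\Gr$ admits a perfect elimination ordering, and in particular has a simplicial vertex $v$, i.e.\ one whose neighborhood $N(v)$ forms a clique in $\Gr$. Since $\{v\}\cup N(v)$ is then itself a clique, it is contained in at least one maximal clique $\cC^\star$ of $\Gr$.

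Next I would exploit the structure of $Q$ around $v$. Let $q_v\in \CC^V$ denote the $v$-th column of $Q$; by sparsity, $q_v$ is supported on $\{v\}\cup N(v)$. If $Q_{vv}=0$ then $Q\succeq 0$ forces the entire $v$-th row and column to vanish, in which case we simply delete $v$ and apply the induction to the induced subgraph on $V\setminus\{v\}$ (still chordal). Otherwise $Q_{vv}>0$, and I set
\[
R \;=\; \frac{1}{Q_{vv}}\, q_v q_v^\star, \qquad Q' \;=\; Q - R.
\]
The matrix $R$ is rank-one, Hermitian PSD, and supported on $\{v\}\cup N(v)\subseteq \cC^\star$. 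The matrix $Q'$ is Hermitian PSD by the standard Schur-complement identity, since after reordering with $v$ first it coincides with the Schur complement of the $(v,v)$-block padded by a zero row and column.

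The key observation is that $Q'$ is sparse according to the induced subgraph $\Gr\setminus v$: the subtracted term $q_v q_v^\star/Q_{vv}$ introduces off-diagonal entries only on $N(v)\times N(v)$, and since $v$ is simplicial those edges were already present in $\Gr$; meanwhile the $v$-th row and column of $Q'$ are now identically zero. Because induced subgraphs of chordal graphs remain chordal, the inductive hypothesis applied to $Q'$ restricted to $V\setminus\{v\}$ produces a decomposition $Q' = \sum_{\cC'} Q'_{\cC'}$ with each $Q'_{\cC'}$ Hermitian PSD and supported on a maximal clique $\cC'$ of $\Gr\setminus v$. Each such $\cC'$ is still a clique of $\Gr$, hence contained in some maximal clique $\widetilde{\cC}$ of $\Gr$; I reindex by assigning $Q'_{\cC'}$ to any such $\widetilde{\cC}$ (and view it as a matrix on all of $V$, padded with zeros). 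Adding $R$ to the bucket for $\cC^\star$ yields the required decomposition $Q = \sum_{\cC} Q_{\cC}$ indexed by the maximal cliques of $\Gr$.

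The main obstacle I anticipate is the clique bookkeeping in the inductive step: a maximal clique of $\Gr\setminus v$ is not automatically a maximal clique of $\Gr$, and $\cC^\star$ itself disappears upon deleting $v$. The remedy is the reindexing just described, together with the observation that zero-padding a PSD matrix supported on a clique into one supported on a containing maximal clique preserves both the support requirement and positive semidefiniteness. An alternative approach avoiding this bookkeeping altogether is to work directly with a clique tree of $\Gr$, peeling off a leaf clique minus its separator at each step; this mirrors the perfect-elimination-ordering approach but keeps the maximal cliques of $\Gr$ as the bookkeeping units from the start.
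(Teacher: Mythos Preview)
Your argument is correct and is essentially the standard proof of this result (the perfect-elimination / Schur-complement peeling argument going back to Griewank--Toint and Grone--Johnson--S\'a--Wolkowicz). Note, however, that the paper does not actually prove this theorem: it is stated in Section~\ref{sec:chordal} as a known result with citations to \cite{griewank1984existence,grone1984positive} and is then used as a black box in the proof of Theorem~\ref{thm:mainsos}. So there is no ``paper's own proof'' to compare against---your proposal supplies a proof where the paper simply invokes the literature.
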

\begin{rem}
If the sparsity pattern $\Gr$ of $Q$ is not chordal, one can still apply the previous theorem by considering a chordal cover $\Gr'$ of $\Gr$. Indeed if $Q$ is sparse according to $\Gr$ then it also clearly sparse according to $\Gr'$, since $\Gr\subseteq \Gr'$. In this case the summation \eqref{eq:Qdecomp} is over the maximal cliques of $\Gr'$.
\end{rem}
\begin{example}
We can illustrate the previous theorem with a simple $4\times 4$ matrix. Let $Q$ be the $4\times 4$ Hermitian positive semidefinite matrix given by:
\[
Q = \begin{bmatrix}
2 & 1-i & 0 & 1+i\\
1+i & 2 & 1-i & 0\\
0 & 1+i & 2 & 1-i\\
1-i & 0 & 1+i & 2
\end{bmatrix}.
\]
Note that $Q$ is sparse according to the ``square graph'' $\Gr$ shown in Figure \ref{fig:chordal_graph}(left). Since $\Gr$ is not chordal we cannot directly apply Theorem \ref{thm:sparsematrixdecomposition} with $\Gr$, but we can apply it with $\Gr'$ shown in Figure\ref{fig:chordal_graph}(right) which is a chordal cover of $\Gr$. In this case Theorem \ref{thm:sparsematrixdecomposition} asserts that one can decompose $Q$ as a sum of two positive semidefinite matrices supported respectively on the maximal cliques, $\{1,2,4\}$ and $\{2,3,4\}$. For this example, it is not hard to find an explicit decomposition, for example we can verify that:
\[
Q = 
\underbrace{
\begin{bmatrix}
2   & 1-i & 0 & 1+i\\
1+i & 1   & 0 & i\\
0   & 0   & 0 & 0\\
1-i & -i  & 0 & 1
\end{bmatrix}
}_{\succeq 0}
+
\underbrace{
\begin{bmatrix}
0 & 0 & 0 & 0\\
0 & 1 & 1-i & -i\\
0 & 1+i & 2 & 1-i\\
0 & i & 1+i & 1
\end{bmatrix}
}_{\succeq 0}.
\]
\end{example}

\paragraph{Matrix completion} One can also state Theorem \ref{thm:sparsematrixdecomposition} in its dual form, in terms of the \emph{matrix completion problem}. Given a graph $\Gr=(V,E)$, a \emph{$\Gr$-partial matrix} $X$ is a matrix where only the diagonal entries, as well as the entries $X_{ij}$ for $\{i,j\} \in E$ are specified. Given a $\Gr$-partial matrix $X$, the positive semidefinite matrix completion problem asks whether $X$ can be completed into a full $|V|\times |V|$ Hermitian matrix that is positive semidefinite. 
Clearly a necessary condition for such a completion to exist is that $X[\cC,\cC] \succeq 0$ for all cliques $\cC$ of $\Gr$ (note that if $\cC$ is a clique of $\Gr$, then all the entries of $X[\cC,\cC]$ are specified). When $\Gr$ is chordal, it turns out that this condition is also sufficient. The following theorem can actually be obtained from Theorem \ref{thm:sparsematrixdecomposition} via duality:
\begin{thm} (\cite{grone1984positive})
    \label{thm:completion}
Let $\Gr=(V,E)$ be a graph and let $X$ be a $\Gr$-partial matrix. Assume that $\Gr$ is chordal.
Then $X$ can be completed into a full $|V|\times |V|$ Hermitian positive semidefinite matrix if, and only if, $X[\cC,\cC] \succeq 0$ for all maximal cliques $\cC$ of $\Gr$.
\end{thm}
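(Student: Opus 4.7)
The plan is to derive Theorem \ref{thm:completion} from Theorem \ref{thm:sparsematrixdecomposition} by passing to conic duals. The forward direction is immediate: if $X$ extends to a Hermitian PSD matrix $\tilde X$, then for every clique $\cC$ the block $X[\cC,\cC] = \tilde X[\cC,\cC]$ is a principal submatrix of a PSD matrix, and hence PSD.

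For the converse, I would work in the real vector space $\cV$ of $\Gr$-partial Hermitian matrices, equipped with the natural trace pairing $\langle X, Q \rangle = \sum_i X_{ii} Q_{ii} + 2\Re \sum_{\{i,j\} \in E} X_{ij} \bar Q_{ij}$. Inside $\cV$ consider the two convex cones
\begin{equation*}
\cK = \{X \in \cV : X \text{ admits a Hermitian PSD completion}\}, \qquad \cS = \{Q \in \cV : \iota(Q) \psd 0\},
\end{equation*}
where $\iota(Q)$ completes $Q$ by zeros in the unspecified positions and $\Pi$ denotes the adjoint projection $\HH^V\to \cV$. A short calculation based on $\langle \Pi(\tilde X), Q\rangle = \Tr(\tilde X \, \iota(Q))$ for $\tilde X \psd 0$ shows that $\cK^* = \cS$. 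Next, apply Theorem \ref{thm:sparsematrixdecomposition} to $\cS$: chordality of $\Gr$ gives $\cS = \sum_{\cC} \cS_{\cC}$ as a Minkowski sum over maximal cliques, where $\cS_{\cC}$ is the cone of partial matrices supported on $\cC \times \cC$ whose $\cC \times \cC$ block is PSD. Dualizing, and using the easy identification $\cS_{\cC}^* = \{X \in \cV : X[\cC,\cC] \psd 0\}$, yields
\begin{equation*}
\cK^{**} = \cS^* = \bigcap_{\cC} \cS_{\cC}^* = \bigcap_{\cC \text{ max clique}} \{X \in \cV : X[\cC,\cC] \psd 0\} =: \cM.
\end{equation*}
Since $\cK^{**} = \overline{\cK}$, this gives $\overline{\cK} = \cM$, which is the theorem up to closure.

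The main obstacle I anticipate is the closedness step: to conclude $\cK = \cM$, one must verify that $\cK$ is already closed. This holds because $\cK = \Pi(\HH^V_+)$ is the linear image of a closed convex cone, and the kernel of $\Pi$ meets $\HH^V_+$ only at the origin---any Hermitian PSD matrix with vanishing diagonal is identically zero, since $|A_{ij}|^2 \leq A_{ii} A_{jj}$. A standard convex-analysis lemma (the image of a closed cone under a linear map whose kernel meets the recession cone only at $0$ is closed) then applies. As a self-contained alternative, one could bypass duality entirely by invoking a perfect elimination ordering of $\Gr$ and filling in missing entries one at a time via Schur complements; however the duality route is more transparent here since it reuses Theorem \ref{thm:sparsematrixdecomposition} directly.
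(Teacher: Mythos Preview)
Your proposal is correct and fleshes out precisely the route the paper alludes to: the paper does not actually prove Theorem~\ref{thm:completion} but merely cites \cite{grone1984positive} and remarks that it ``can actually be obtained from Theorem~\ref{thm:sparsematrixdecomposition} via duality.'' Your conic-duality argument makes this remark rigorous, including the non-obvious closedness step (which is exactly where a careless duality argument could go wrong, and which you handle correctly via $\ker\Pi\cap\HH^V_+=\{0\}$).
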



\subsection{Lifts of polytopes}
\label{sec:lifts}

In this section we recall some of the definitions and terminology concerning \emph{lifts (or extended formulations) of polytopes}. The concepts defined here are not used in the proofs of our theorems, but simply make some of the results more convenient to state. We refer the reader to \cite{yannakakis1991expressing,gouveia2011lifts} for more details. 

Let $P \subset \RR^d$ be a polytope. We say that $P$ has a \emph{LP lift} of size $k$ if $P$ can be expressed as the linear projection of an affine section of the cone $\RR^k_+$, i.e., if there exist $\pi:\RR^k \rightarrow \RR^d$ linear and an affine subspace $L \subset \RR^k$ such that:
\begin{equation}
 \label{eq:LPlift}
 P = \pi(\RR^k_+ \cap L).
\end{equation}
Note that this definition is equivalent to say that $P$ is the projection of a polytope $Q$ with $k$ facets. 
The smallest $k$ such that $P$ has a LP lift of size $k$ is called the \emph{LP extension complexity} of $P$ and is denoted $\xcLP(P)$.

The definition of LP lift can be extended to \emph{PSD lifts}, where instead we are looking to describe $P$ using \emph{linear matrix inequalities}. Formally, we say that $P$ has a \emph{Hermitian PSD lift} of size $k$ if $P$ can be expressed as the linear projection of an affine section of the Hermitian positive semidefinite cone $\HH^k_+$, i.e., if there exist $\pi:\HH^k\rightarrow \RR^d$ linear, and an affine subspace $L \subset \HH^k$ such that:
\begin{equation}
 \label{eq:PSDlift}
 P = \pi(\HH^k_+ \cap L).
\end{equation}
The smallest $k$ for which $P$ has a PSD lift of size $k$ is called the \emph{PSD extension complexity} of $P$ and denoted $\xcPSD(P)$.
Note that one can also define PSD lifts with the cone of real symmetric positive semidefinite matrices $\S^k_+$ (instead of $\HH^k_+$) and in this case we call the lift a \emph{real PSD lift}.

\section{Main result for general finite abelian groups}
\label{sec:general}

In this section we state and prove our main result in the general setting of finite abelian groups $G$. We first describe the primal point of view concerning sparse sum-of-squares certificates of nonnegative functions, and then we present the dual point of view related to moment polytopes.

\subsection{Nonnegative functions and sum-of-squares certificates}
\label{sec:sos}

Let $G$ be a finite abelian group and let $\cF(G,\CC)$ be the space of complex-valued functions on $G$. Given a nonnegative function $f:G\rightarrow \RR_+$, a \emph{sum-of-squares certificate} for $f$ takes the form:
\begin{equation}
\label{eq:sosrep}
f(x) = \sum_{k=1}^K |f_k(x)|^2 \quad \forall x \in G.
\end{equation}
where $f_1,\dots,f_K \in \cF(G,\CC)$.

It is well-known in the literature on polynomial optimization (see e.g., \cite{parrilo2000structured,nesterov2000squared,lasserre2001global}) that the existence of sum-of-squares certificates can be expressed in terms of the existence of a certain positive semidefinite matrix called a \emph{Gram} matrix for $f$. This connection between sum-of-squares certificates and positive semidefinite matrices will be important in this paper, and so we recall this connection more formally in the next proposition:
\begin{prop}
\label{prop:sos}
Let $n = |G|$ and let $b_1,\dots,b_n$ be a basis for $\cF(G,\CC)$. Let $f:G\rightarrow \RR$ be a real-valued function on $G$. Then $f$ has a sum-of-squares representation \eqref{eq:sosrep}, if, and only if, there exists a $n\times n$ Hermitian positive semidefinite matrix $Q$ such that
\begin{equation}
 \label{eq:bsos}
 f(x) = [b(x)]^* Q [b(x)] = \sum_{1\leq i,j \leq n} Q_{ij} \overline{b_i(x)} b_j(x) \quad \forall x \in G
\end{equation}
where $[b(x)] := [b_i(x)]_{i=1,\dots,n} \in \CC^n$. If \eqref{eq:bsos} holds where $Q$ is Hermitian positive semidefinite, we say that $Q$ is a \emph{Gram matrix} for $f$ in the basis $b_1,\dots,b_n$.
\end{prop}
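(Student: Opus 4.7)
The plan is to prove both directions by direct construction, exploiting the standard correspondence between rank-one decompositions of a Hermitian PSD matrix and sums of squared absolute values.

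For the ``if'' direction, suppose $Q \in \HH^n_+$ satisfies \eqref{eq:bsos}. Because $Q$ is Hermitian positive semidefinite, I would apply the spectral decomposition to write $Q = \sum_{k=1}^K v_k v_k^*$ for some vectors $v_k \in \CC^n$ (one can take $K = \rank Q$, but any such factorization works; Cholesky gives an alternative). Plugging this into \eqref{eq:bsos} gives
\[
f(x) = [b(x)]^* \Bigl(\sum_{k=1}^K v_k v_k^*\Bigr) [b(x)] = \sum_{k=1}^K \bigl|v_k^* [b(x)]\bigr|^2.
\]
Defining $f_k(x) := v_k^* [b(x)] = \sum_{i=1}^n \overline{v_{k,i}}\, b_i(x) \in \cF(G,\CC)$ then yields a sum-of-squares representation of the form \eqref{eq:sosrep}.

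For the ``only if'' direction, suppose $f = \sum_{k=1}^K |f_k|^2$ with $f_k \in \cF(G,\CC)$. Since $b_1,\dots,b_n$ is a basis of $\cF(G,\CC)$, I would expand each $f_k$ uniquely as $f_k(x) = \sum_{i=1}^n c_{k,i}\, b_i(x) = c_k^\top [b(x)]$ for some coefficient vector $c_k \in \CC^n$. Then
\[
|f_k(x)|^2 = \overline{f_k(x)}\, f_k(x) = [b(x)]^* \,\bar{c}_k\, c_k^\top\, [b(x)],
\]
where $\bar c_k c_k^\top$ is a Hermitian rank-one PSD matrix. Summing over $k$, the matrix $Q := \sum_{k=1}^K \bar{c}_k c_k^\top \in \HH^n_+$ is Hermitian PSD and satisfies \eqref{eq:bsos} for every $x \in G$, as required.

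There is no real obstacle here; the argument is entirely standard. The only subtlety worth a sentence of commentary is that the Gram matrix $Q$ in \eqref{eq:bsos} is generally \emph{not unique}: the $n^2$ functions $\overline{b_i(x)} b_j(x)$ live in the $n$-dimensional space $\cF(G,\CC)$ and hence satisfy linear relations, so different PSD matrices $Q$ can represent the same function $f$. This non-uniqueness plays no role in the ``exists'' statement being proved, but it is what makes searching for a sparse Gram matrix (step~1 of the three-step strategy outlined before Theorem~\ref{thm:intro-main-sos}) a nontrivial problem. Finally, the assumption that $f$ is real-valued is consistent with $Q$ being Hermitian, since $[b(x)]^* Q [b(x)]$ is then automatically equal to its own complex conjugate.
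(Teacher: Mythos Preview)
Your proof is correct and follows essentially the same approach as the paper: both directions hinge on the standard equivalence between a rank-one decomposition $Q=\sum_k v_kv_k^*$ of a Hermitian PSD matrix and a sum-of-squares expression $\sum_k|f_k|^2$ obtained by expanding each $f_k$ in the basis $b_1,\dots,b_n$. The only differences are cosmetic (your coefficient vectors $c_k,\bar c_k$ versus the paper's $a_k$), and your closing remarks on non-uniqueness of $Q$ and the real-valuedness of $[b(x)]^*Q[b(x)]$ are accurate side observations that the paper does not include.
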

\begin{proof}
Assume first that $f$ is a sum of squares, i.e., $f(x) = \sum_{k=1}^K |f_k(x)|^2$. Since $(b_1,\dots,b_n)$ forms a basis of $\cF(G,\CC)$ we can write $f_k(x) = \sum_{i=1}^n \bar{a_{ki}} b_i(x)$ for some coefficients $a_{ki} \in \CC$. Note that
$|f_k(x)|^2 = \sum_{1\leq i, j \leq n} a_{ki} \overline{a_{kj}} \overline{b_i(x)} b_j(x)$
and thus
$f(x) = \sum_{k} |f_k(x)|^2 = \sum_{1\leq i, j \leq n} Q_{i,j} \overline{b_i(x)} b_j(x)$
where $Q$ is the Hermitian matrix defined by:
$Q_{i,j} = \sum_{k} a_{ki} \overline{a_{kj}}$.
Note that $Q$ is positive semidefinite since it has the form $Q = \sum_{k} a_k a_k^*$ where $a_k$ is the vector $(a_k)_i = a_{ki}$.

We now show the converse. Assume $f$ can be written as \eqref{eq:bsos}. Since $Q$ is positive semidefinite, we can find vectors $a_k$ such that $Q = \sum_{k=1}^K a_k a_k^*$. If we define $f_k$ to be the function $f_k(x) = \sum_{i=1}^n \bar{a_{ki}} b_i(x)$ then we can verify that $f = \sum_{k=1}^K |f_k|^2$.
\end{proof}
Given $y \in G$ define the Dirac function $\delta_y$ at $y$ by:
\[ \delta_y(x) = \begin{cases} 1 & \text{ if } x = y\\ 0 & \text{ else.} \end{cases} \]
Then it is easy to see that we have:
\begin{prop}
\label{prop:nonnegative=sos}
Any nonnegative function $f$ on $G$ has a sum-of-squares certificate as:
\begin{equation}
\label{eq:diracsos}
 f(x) = \sum_{y \in G} |\sqrt{f(y)} \delta_y(x)|^2 \quad \forall x \in G.
\end{equation}
\end{prop}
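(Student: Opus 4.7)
The plan is to verify the identity \eqref{eq:diracsos} pointwise by direct computation, using only the definition of the Dirac function $\delta_y$ and the nonnegativity of $f$ (which ensures that $\sqrt{f(y)}$ makes sense as a real number for every $y \in G$).

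First I would fix an arbitrary $x \in G$ and evaluate the right-hand side. Since $\delta_y(x) \in \{0,1\}$, we have $|\sqrt{f(y)}\delta_y(x)|^2 = f(y)\,\delta_y(x)^2 = f(y)\,\delta_y(x)$ for every $y \in G$. Summing over $y \in G$ then gives
\[
\sum_{y \in G} |\sqrt{f(y)}\,\delta_y(x)|^2 \;=\; \sum_{y \in G} f(y)\,\delta_y(x) \;=\; f(x),
\]
where in the last step only the term $y = x$ is nonzero.

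Since $x$ was arbitrary, the identity \eqref{eq:diracsos} holds on all of $G$, and each summand is manifestly a squared modulus of a function $\sqrt{f(y)}\delta_y \in \cF(G,\CC)$, so this is a valid sum-of-squares certificate in the sense of \eqref{eq:sosrep}. There is no real obstacle here; the statement is essentially the observation that a finite nonnegative function is trivially a sum of squares via its pointwise decomposition, and it serves in the sequel as the starting point from which a \emph{sparser} Gram matrix (supported on $\Cay(\hat{G},\cS)$) will later be extracted when $f$ is supported on $\cS$.
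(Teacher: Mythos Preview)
Your proof is correct and is exactly the trivial pointwise verification the paper has in mind; the paper itself omits the argument, introducing the proposition only with ``it is easy to see that we have,'' so there is nothing further to compare.
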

Said differently, a nonnegative function $f$ is a sum-of-squares because if we pick $b_1,\dots,b_n$ to be the basis of Dirac functions, then $f$ satisfies Equation \eqref{eq:bsos} where $Q$ is the diagonal matrix consisting of the values taken by $f$ on $G$.

Since we are working with functions on a finite abelian group $G$, it is more natural (and more beneficial, as we see later) to look at sum-of-squares representation in the basis of \emph{characters}. One reason for this is that typically the functions $f$ we are interested in have a small support in the basis of characters and in this case one can hope to find a sum-of-squares decomposition which also only involves a small number of characters.
The next theorem is simply a change-of-basis in the formula \eqref{eq:diracsos}.
\begin{prop}
\label{prop:soschar}
Let $f:G\rightarrow \RR$ and assume that $f$ is nonnegative, i.e., $f(x) \geq 0$ for all $x \in G$. Define the Hermitian matrix $Q \in \RR^{|G|\times |G|}$ indexed by characters $\chi\in \hat{G}$ by:
\begin{equation}
\label{eq:Q}
 Q_{\chi,\chi'} = \hat{f}(\overline{\chi} \chi').
\end{equation}
Then $Q$ is positive semidefinite and we have for any $x \in G$:
\begin{equation}
\label{eq:id}
 f(x) = \frac{1}{|G|} [\chi(x)]^* Q [\chi(x)] = \frac{1}{|G|} \sum_{\chi,\chi' \in \hat{G}} Q_{\chi,\chi'} \overline{\chi(x)} \chi'(x).
\end{equation}
\end{prop}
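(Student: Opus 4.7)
The plan is to verify both claims directly using the definition of the Fourier coefficients, without needing to invoke Proposition \ref{prop:sos}. The identity \eqref{eq:id} is essentially a rewriting, while the positive semidefiniteness of $Q$ will come from a direct quadratic-form calculation that uses the nonnegativity of $f$.

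First I would establish the identity \eqref{eq:id}. The trick is to reindex the double sum on the right-hand side: setting $\psi = \bar{\chi}\chi'$, each $\psi \in \hat{G}$ is obtained from exactly $|\hat{G}| = |G|$ pairs $(\chi,\chi')$, and moreover $\overline{\chi(x)}\chi'(x) = (\bar{\chi}\chi')(x) = \psi(x)$. Thus
\begin{equation*}
\frac{1}{|G|} \sum_{\chi,\chi'\in\hat{G}} Q_{\chi,\chi'}\overline{\chi(x)}\chi'(x) = \frac{1}{|G|} \sum_{\psi\in\hat{G}} |G|\,\hat{f}(\psi)\,\psi(x) = f(x),
\end{equation*}
using Fourier inversion. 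For Hermiticity, note that $f$ is real, so $\overline{\hat{f}(\psi)} = \hat{f}(\bar{\psi})$ from the formula $\hat{f}(\psi) = \frac{1}{|G|}\sum_x \overline{\psi(x)}f(x)$. Hence $\overline{Q_{\chi,\chi'}} = \hat{f}(\chi\bar{\chi'}) = Q_{\chi',\chi}$.

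The main step is positive semidefiniteness. For arbitrary $u\in\CC^{\hat{G}}$, I would expand
\begin{equation*}
u^*Qu = \sum_{\chi,\chi'\in\hat{G}} \overline{u_\chi} u_{\chi'} \hat{f}(\bar{\chi}\chi')
= \frac{1}{|G|}\sum_{x\in G} f(x) \sum_{\chi,\chi'\in\hat{G}} \overline{u_\chi} u_{\chi'} \chi(x)\overline{\chi'(x)},
\end{equation*}
where in the second equality I have substituted $\hat{f}(\bar{\chi}\chi') = \frac{1}{|G|}\sum_x f(x)\chi(x)\overline{\chi'(x)}$ and swapped the order of summation. The inner double sum factors as $\bigl|\sum_{\chi}\overline{u_\chi}\chi(x)\bigr|^2$, so
\begin{equation*}
u^*Qu = \frac{1}{|G|}\sum_{x\in G} f(x)\Bigl|\sum_{\chi\in\hat{G}}\overline{u_\chi}\chi(x)\Bigr|^2 \geq 0
\end{equation*}
since $f(x)\geq 0$ for every $x\in G$. (Equivalently, this is exactly the expression one obtains by taking the Dirac-basis Gram matrix $\diag(f(y))_{y\in G}$ from Proposition \ref{prop:nonnegative=sos} and conjugating by the character/Dirac change-of-basis matrix, which is the viewpoint hinted at in the text preceding the proposition.)

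I do not anticipate a real obstacle here: both claims reduce to routine manipulations using orthogonality of characters and the fact that $f\geq 0$. The only points that require care are tracking the factor $1/|G|$ from the normalization in \eqref{eq:ip} and being consistent about complex conjugates when reindexing by $\psi = \bar{\chi}\chi'$.
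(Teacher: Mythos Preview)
Your proof is correct and is essentially the same argument as the paper's, just written out in coordinates rather than in matrix form. The paper packages everything via the unitary change-of-basis matrix $X=[\chi(x)]_{x,\chi}$, writing $Q=\tfrac{1}{|G|}X^*\diag(f)X$ to read off both positive semidefiniteness (eigenvalues are the values $f(x)$) and the identity~\eqref{eq:id} simultaneously; your quadratic-form expansion $u^*Qu=\tfrac{1}{|G|}\sum_x f(x)\,|\sum_\chi \overline{u_\chi}\chi(x)|^2$ is exactly this conjugation spelled out, as you note in your parenthetical remark.
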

\begin{proof}
Consider the matrix $X = [\chi(x)]_{x \in G, \chi \in \hat{G}}$ where rows are indexed by elements $x \in G$ and columns are indexed by characters $\chi \in \hat{G}$. Since the characters form an orthonormal basis of $\cF(G,\CC)$ for the inner product \eqref{eq:ip}, this means that the matrix $\frac{1}{\sqrt{|G|}} X$ is a unitary matrix. Note that we can rewrite the definition \eqref{eq:Q} of $Q$ in matrix terms as follows:
\[ Q = \frac{1}{|G|} X^* \diag( [f(x)]_{x \in G} ) X, \]
where $\diag([f(x)]_{x \in G})$ is the diagonal matrix with the values $f(x)$ on the diagonal. This shows that the eigenvalues of $Q$ are the values $\{f(x), x \in G\}$, and thus $Q$ is positive semidefinite. Since $\frac{1}{\sqrt{|G|}} X$ is unitary we also get that:
\[ \diag( [f(x)]_{x \in G} ) = \frac{1}{|G|} X Q X^*. \]
which, when evaluated at the diagonal entries is exactly Equation \eqref{eq:id}.
\end{proof}


\begin{example}
\label{ex:Z6}
We now include a simple example to illustrate the previous theorem. Let $G = \ZZ_6$ and consider the function
\begin{equation}
 \label{eq:exampleZ6}
 f(x) = 1 - \frac{1}{2}(\chi_1(x) + \chi_{-1}(x)) = 1 - \cos(2 \pi x / 6) \quad \forall x \in \ZZ_6.
\end{equation}
Clearly $f(x) \geq 0$ for all $x \in \ZZ_6$. 
Also note that $\hat{f}(0) = 1$, $\hat{f}(1) = \hat{f}(-1) = -1/2$ and $\hat{f}(k) = 0$ for all $k \notin \{-1,0,1\}$. The matrix $Q$ defined in \eqref{eq:Q} associated to this function $f$ takes the form:
\begin{equation}
\label{eq:QZ6}
Q =
\begin{bmatrix}
1 & -1/2 & 0 & 0 & 0 & -1/2\\
-1/2 & 1 & -1/2 & 0 & 0 & 0\\
0 & -1/2 & 1 & -1/2 & 0 & 0\\
0 & 0 & -1/2 & 1 & -1/2 & 0\\
0 & 0 & 0 & -1/2 & 1 & -1/2\\
-1/2 & 0 & 0 & 0 & -1/2 & 1
\end{bmatrix}
\end{equation}
\end{example}



We are now interested in nonnegative functions $f:G\rightarrow \RR$ that are supported on a subset $\cS \subseteq \hat{G}$, i.e., $\hat{f}(\chi) = 0$ for all $\chi \notin \cS$. For such functions we are interested in finding \emph{sparse} sum-of-squares certificates for $f$, i.e., we are interested in finding a set $\cT \subseteq \hat{G}$ such that any nonnegative function $f$ supported on $\cS$ has a sum-of-squares certificate of the form:
\begin{equation}
 \label{eq:sparsesosT}
 f = \sum_{k=1}^K |f_k|^2 \quad \text{ where } \quad \supp f_k \subseteq \cT \;\; \; \forall k=1,\dots,K.
\end{equation}

The main idea to obtain such a ``sparse'' sum-of-squares certificate of $f$ is to exploit the sparsity of the Gram matrix $Q$ from Proposition \ref{prop:soschar}. Indeed, note that if $\supp f = \cS$, then the Gram matrix $Q$ of Proposition \ref{prop:soschar} has a specific sparsity structure:
\[ Q_{\chi,\chi'} \neq 0 \Leftrightarrow \bar{\chi} \chi' \in \cS. \]
In other words, the sparsity structure of $Q$ is given by the \emph{Cayley graph} $\Cay(\hat{G},\cS)$. Recall the definition of a Cayley graph:
\begin{defn}
Let $H$ be a group and let $\cS \subset H$ be a subset of $H$ that is symmetric, i.e., $x\in \cS \Rightarrow x^{-1} \in \cS$. The Cayley graph $\Cay(H,\cS)$ is the graph where vertices are the elements of the group $H$, and where two distinct vertices $x,y \in H$ are connected by an edge if $x^{-1} y \in \cS$ (or $y^{-1} x \in \cS$, which is the same since $\cS$ is symmetric).
\end{defn}
\begin{rem}
We do not require the set $\cS$ to be a generator for the group $H$ and hence the graph $\Cay(H,\cS)$ may be disconnected. Also observe that the set $\cS = \supp f$ in our case is symmetric since $f$ is real-valued; indeed when $f$ is real-valued we have $\hat{f}(\bar{\chi}) = \bar{\hat{f}(\chi)}$ for all $\chi \in \hat{G}$ and thus $\chi \in \supp f \Rightarrow \bar{\chi} \in \supp f$.
\end{rem}

To obtain a set $\cT \subseteq \hat{G}$ such that \eqref{eq:sparsesosT} holds for all functions $f$ supported on $\cS$ we will study \emph{chordal covers} of the graph $\Cay(\hat{G},\cS)$. We now introduce the key definition of \emph{Fourier support} for a graph with vertices $\hat{G}$.
\begin{defn}
\label{def:fouriersupportgraph}
Let $\Gamma$ be a graph with vertices $\hat{G}$. We say that $\Gamma$ has \emph{Fourier support} (or \emph{frequencies}) $\cT \subseteq \hat{G}$ if for any maximal clique $\cC$ of $\Gamma$ there exists $\chi_{\cC} \in \hat{G}$ such that $\chi_{\cC} \cC \subseteq \cT$ (where $\chi_{\cC} \cC:=\{\chi_{\cC} \chi : \chi \in \cC\}$ is the translation of $\cC$ by $\chi_{\cC}$).
\end{defn}
Note that one can also state the definition of Fourier support of $\Gamma$ in terms of \emph{equivalence classes} of cliques:  Given a subset $\cC \subseteq \hat{G}$ define the \emph{equivalence class} of $\cC$ to be all the subsets of $\hat{G}$ that can be obtained from $\cC$ by translation, i.e., it is the set $[\cC] := \{ \chi \cC : \chi \in \hat{G}\}$. Using this terminology, the graph $\Gamma$ has Fourier support $\cT$ if for any maximal clique $\cC$ of $\Gamma$ there is at least one representative from $[\cC]$ that is contained in $\cT$.

We are now ready to state and prove our main theorem (the theorem below was stated as Theorem \ref{thm:intro-main-sos} in the introduction and we reuse the same numbering here since it is just a restatement).
\begin{repthm}{thm:intro-main-sos}
\label{thm:mainsos}
Let $\cS$ be a symmetric subset of $\hat{G}$ and assume that $\Cay(\hat{G},\cS)$ has a chordal cover with Fourier support $\cT \subseteq \hat{G}$. Then any nonnegative function supported on $\cS$ admits a sum-of-squares certificate supported on $\cT$.
\end{repthm}
\begin{proof}
Let $f:G\rightarrow \RR$ be a nonnegative function supported on $\cS$. Let $Q$ be the Gram matrix \eqref{eq:Q} associated to the sum-of-squares representation of $f$ in the basis of characters. We saw that $Q$ is sparse according to the Cayley graph $\Cay(\hat{G},\cS)$. Since $\Gamma$ is a cover of $\Cay(\hat{G},\cS)$, $Q$ is also sparse according to $\Gamma$. Thus, since $\Gamma$ is chordal, using Theorem \ref{thm:sparsematrixdecomposition} we can find a decomposition of $Q$ as follows:
\begin{equation}
 \label{eq:pfQdecomp}
 Q = \sum_{\cC} Q_{\cC}
\end{equation}
where the sum is over the maximal cliques $\cC$ of $\Gamma$ and where each $Q_{\cC}$ is a positive semidefinite matrix supported on $\cC$. Note that Equation \eqref{eq:pfQdecomp} implies that for all $x \in G$:
\[
[\chi(x)]^* Q [\chi(x)] = \sum_{\cC} [\chi(x)]^* Q_{\cC} [\chi(x)],
\]
where $[\chi(x)] := [\chi(x)]_{\chi \in \hat{G}}$. Since $f(x) = [\chi(x)]^* Q [\chi(x)] / |G|$ the above equation says that:
\[ f(x) = \sum_{\cC} f_{\cC}(x) \]
where we let $f_{\cC}(x) := [\chi(x)]^* Q_{\cC} [\chi(x)] / |G|$.
Since $Q_{\cC}$ is positive semidefinite and supported on $\cC$, this means that each $f_{\cC}(x)$ is a sum-of-squares of functions supported on $\cC \subseteq \hat{G}$, i.e.,
\[ f_{\cC} = \sum_{k} |f_{\cC,k}|^2 \]
where $\supp f_{\cC,k} \subseteq \cC$.

According to Definition \ref{def:fouriersupportgraph}, we know that there exist $\chi_{\cC} \in \hat{G}$ for each maximal clique $\cC$ of $\Gamma$ such that $\chi_{\cC} \cC \subseteq \cT$. Now, observe that:
\[
f = \sum_{\cC} f_{\cC} = \sum_{\cC} \sum_{k} |f_{\cC,k}|^2 \overset{(i)}{=} \sum_{\cC} \sum_{k} |\chi_{\cC} f_{\cC,k}|^2
\overset{(ii)}{=} \sum_{\cC} \sum_{k} |\tilde{f}_{\cC,k}|^2
\]
where in $(i)$ we used the fact that $|\chi_{\cC}|^2 = 1$ and in $(ii)$ we let $\tilde{f}_{\cC,k} = \chi_{\cC} f_{\cC,k}$ which is supported on $\chi_{\cC} \cC \subseteq \cT$.
Thus we have shown that $f$ is a sum-of-squares of functions supported on $\cT$.
\end{proof}

\begin{example}
    \label{eg:hexagonsos}
Let $G = \ZZ_6$ and let $\cS = \{-1,0,1\} \subset \hat{\ZZ_6}$. We will use the previous theorem to show that any nonnegative function on $\ZZ_6$ supported on $\cS=\{-1,0,1\}$ is a sum-of-squares of functions supported on $\cT = \{-1,0,1,3\} \subseteq \hat{\ZZ_6}$. 
The Cayley graph $\Cay(\hat{\ZZ_6},\{-1,0,1\})$ is the cycle graph on 6 nodes shown in Figure \ref{fig:example_hexagon}(left). Clearly the graph is not chordal since the cycle $0,1,\dots,5$ has no chord. Figure \ref{fig:example_hexagon}(right) shows a chordal cover $\Gamma$ of $\Cay(\hat{\ZZ_6},\{-1,0,1\})$ where the maximal cliques are: 
\[ \cC_1 = \{0,1,3\}, \;\; \cC_2 = \{1,2,3\}, \;\; \cC_3 = \{3,4,5\}, \;\; \cC_4 = \{0,3,5\}. \]
\begin{figure}[ht]
  \centering
  \includegraphics[scale=1]{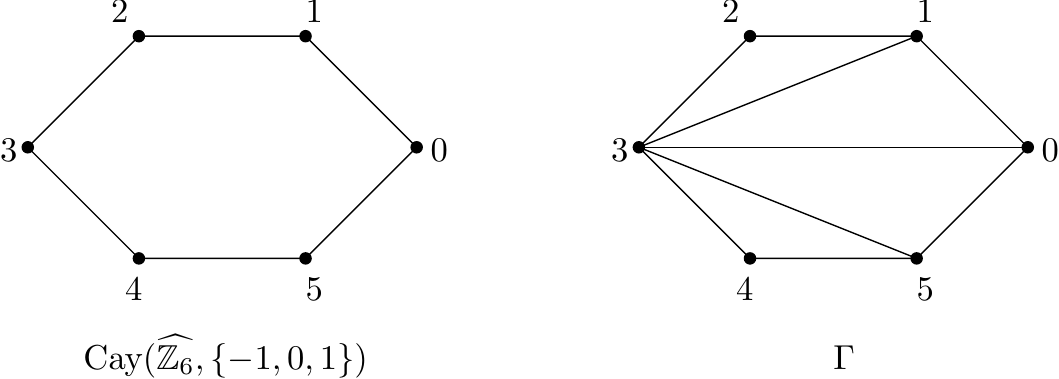}
  \caption{Left: The Cayley graph $\Cay(\hat{\ZZ_6},\{-1,0,1\})$ is the cycle graph on 6 nodes. Right: A chordal cover of the cycle graph, $\Gamma$.}
  \label{fig:example_hexagon}
\end{figure}

Observe that if we translate the clique $\cC_2 = \{1,2,3\}$ by $-2$ we get $\{-1,0,1\}$ and similarly if we translate the clique $\{3,4,5\}$ by $-4$ we also get $\{-1,0,1\}$. Thus by choosing 
\[ \chi_{\cC_1} = 0, \;\; \chi_{\cC_2} = -2, \;\; \chi_{\cC_3} = -4, \;\; \chi_{\cC_4} = 0 \] we get that $\chi_{\cC} + \cC \subseteq \{-1,0,1,3\}$ for all maximal cliques $\cC$ of $\Gamma$ (we used the fact that $5 = -1$ in $\ZZ_6$). In other words we have shown that $\Gamma$ is a chordal cover of $\Cay(\hat{\ZZ_6},\{-1,0,1\})$ with frequencies $\{-1,0,1,3\}$. Thus by Theorem \ref{thm:mainsos}, this means that any nonnegative function on $\ZZ_6$ supported on $\{-1,0,1\}$ can be written as a sum-of-squares of functions supported on $\{-1,0,1,3\}$.
\end{example}

\subsection{Dual point of view: moment matrices and matrix completion}
\label{sec:moment}

Section~\ref{sec:sos} shows that nonnegative functions $f:G\rightarrow \RR$ that 
have Fourier support $\cS$ can be written as sums of Hermitian squares of 
functions $h:G\rightarrow \CC$ with Fourier support $\cT$, where $\cT$ satisfies 
a combinatorial property related to the Cayley graph $\Cay(\hat{G},\cS)$. 
In this section we describe the same results from the dual point of view,
arriving at a dual statement (Theorem~\ref{thm:moment-main})
of the main result of Section~\ref{sec:sos} (Theorem~\ref{thm:mainsos}).
The dual result describes a certain moment polytope $\cM(G,\cS)$ (see Definition~\ref{def:moment-polytope} to follow)
as the projection of a section of the cone of positive semidefinite matrices indexed by $\cT$. 
We could obtain this dual result by applying a conic duality argument directly to the statement 
of Theorem~\ref{thm:mainsos}. The purpose of this section, however, 
is to re-explain the results of Section~\ref{sec:sos} from an alternative viewpoint.

\subsubsection{Moment polytopes}
We begin by describing the moment polytope $\cM(G,\cS)$ where $\cS\subseteq \hat{G}$ is a collection of characters.
Concretely $\cM(G,\cS)$ is the convex hull of a collection of complex vectors indexed by $\cS$:
\[ \cM(G,\cS) = \textup{conv}\{(\chi(x))_{\chi\in \cS}\in \CC^{\cS}: x\in G\}.\]
The following definition is equivalent, somewhat easier to work with, and more readily generalizable.
\begin{defn}
    \label{def:moment-polytope}
    The \emph{moment polytope} $\cM(G,\cS)$ with respect to the characters $\cS\subseteq \hat{G}$, is the convex polytope
\[ \cM(G,\cS) = \{ (\EE_{\mu}[\chi])_{\chi\in \cS}\in \CC^{\cS}: \textup{$\mu$ a probability measure supported on $G$}\}.\]
\end{defn}
For example, in the case where $G = \ZZ_6$ and $\cS = \{-1,1\}$, the moment polytope is 
\[ \cM(\ZZ_6,\{-1,1\}) = \textup{conv}\{(e^{\frac{-2\pi ki}{6}},e^{\frac{2\pi ki}{6}}): k\in \ZZ_6\}.\]

Nonnegative functions on finite abelian groups are sums of squares. Furthermore, we can express sums of squares 
concretely in terms of a Gram matrix. There is a similarly concrete way to 
describe the constraints that must be satisfied by a collection of complex numbers $\ell\in \CC^{\hat{G}}$
if they are a valid collection of moments of a probability measure supported on $G$. This description is given 
naturally in terms of a matrix constructed from $(\ell_\chi)_{\chi\in \hat{G}}$.
\begin{defn}
    \label{def:moment-matrix}
    If $\ell \in \CC^{\hat{G}}$, the associated \emph{moment matrix} is the 
    square matrix with rows and columns indexed by $\hat{G}$ of the form
    \[ [M(\ell)]_{\chi,\chi'} = \ell_{\bar{\chi}\chi'}\quad\text{for all $\chi,\chi'\in \hat{G}$}.\]
    If $\cT\subseteq \hat{G}$ and $\ell\in \CC^{\cT^{-1}\cT}$, the associated
    \emph{truncated moment matrix} is the square matrix with rows and columns indexed by $\cT$ of the form
    \[ [M_{\cT}(\ell)]_{\chi,\chi'} = \ell_{\bar{\chi}\chi'}\quad\text{for all $\chi,\chi'\in \cT$}.\]
\end{defn}
We now describe the dual version of the fact that any nonnegative function on $G$ is a 
sum of squares. Writing this in coordinates gives a concrete description in terms of moment matrices.
That probability measures are real-valued and nonnegative, and have total mass one corresponds to 
the conditions that for any valid moment vector $\ell$, the moment matrix $M(\ell)$ is Hermitian and positive semidefinite, and has
unit diagonal.
\begin{prop}
    \label{lem:moment-obv}
    The moment polytope can be expressed as 
    \[\cM(G,\hat{G}) = \{(\ell(\chi))_{\chi\in \hat{G}}: \ell(1_{\hat{G}})=1,\;\;\ell(|f|^2) \geq 0\quad\text{for all $f \in \cF(G,\CC)$}\}.\]
    Equivalently, defining coordinates $\ell_\chi := \ell(\chi)$ with respect to the character basis for $\cF(G,\CC)$ we have
\[ \cM(G,\hat{G}) = \{\ell\in \CC^{\hat{G}}: \ell_{1_{\hat{G}}} = 1,\;\; M(\ell)\psd 0\}.\]
\end{prop}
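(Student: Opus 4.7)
The plan is to prove the two characterizations in sequence: first establishing the equality $\cM(G,\hat{G}) = \{\ell: \ell(1_{\hat{G}}) = 1,\ \ell(|f|^2)\geq 0\ \forall f\}$ directly from the definition, and then translating the functional condition $\ell(|f|^2)\geq 0$ into the concrete matrix inequality $M(\ell)\psd 0$ by expanding in the character basis.

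For the forward inclusion ($\subseteq$), I will start with $\ell_\chi = \EE_\mu[\chi]$ for some probability measure $\mu$ on $G$ and extend $\ell$ to all of $\cF(G,\CC)$ by linearity (which is unambiguous since the characters form a basis by Theorem~\ref{thm:charactersONB}). Both conditions are then immediate: $\ell(1_{\hat{G}}) = \EE_\mu[1] = 1$, and $\ell(|f|^2) = \EE_\mu[|f|^2]\geq 0$ because $|f|^2$ is pointwise nonnegative on $G$.

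For the reverse inclusion ($\supseteq$), the key idea is to extract a probability measure from $\ell$ by evaluating it on the Dirac basis $\{\delta_y\}_{y\in G}$. The crucial observation is that $\delta_y$ is $\{0,1\}$-valued, so $|\delta_y|^2 = \delta_y$, and therefore $\ell(\delta_y) = \ell(|\delta_y|^2) \geq 0$. Since $\sum_{y\in G}\delta_y$ is the constant function $1$ on $G$, which equals the trivial character $1_{\hat{G}}$, summing gives $\sum_y \ell(\delta_y) = \ell(1_{\hat G}) = 1$, so $\mu(\{y\}) := \ell(\delta_y)$ is a probability measure on $G$. Expanding each character in the Dirac basis as $\chi = \sum_y \chi(y)\delta_y$ and applying linearity then recovers $\ell_\chi = \EE_\mu[\chi]$.

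For the equivalence with the moment-matrix formulation, I will expand an arbitrary $f = \sum_\chi c_\chi \chi$ in the character basis and compute
\[
\ell(|f|^2) = \sum_{\chi,\chi'\in \hat{G}} \overline{c_\chi}\, c_{\chi'}\,\ell_{\bar\chi \chi'} = c^{*} M(\ell)\, c.
\]
Since every vector $c\in \CC^{\hat{G}}$ arises from some $f$ by this parametrization, requiring $\ell(|f|^2)\geq 0$ for all $f\in \cF(G,\CC)$ is equivalent to requiring $c^* M(\ell)c \geq 0$ for all $c\in \CC^{\hat{G}}$, i.e., $M(\ell)\psd 0$ (Hermiticity of $M(\ell)$ is automatic from $c^* M(\ell) c \in \RR$ for all $c$). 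None of the steps present a serious obstacle; the only point that demands a bit of care is using commutativity of $\hat G$ correctly so that $\bar\chi\,\chi'$ appears in the $(\chi,\chi')$ entry of the moment matrix when expanding $f\bar f$.
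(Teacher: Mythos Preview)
Your proof is correct and follows essentially the same route as the paper: both directions of the first characterization use the Dirac basis trick $\delta_y = |\delta_y|^2$ to extract a probability measure from $\ell$, and the passage to the moment-matrix formulation is the same character-basis expansion $\ell(|f|^2) = c^* M(\ell)\,c$.
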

\begin{proof}
    See Appendix~\ref{app:moment}
\end{proof}
 Some readers may recognize this as essentially a statement of 
Bochner's theorem for finite abelian groups~\cite{rudin1990fourier}. If we regard $\ell$ as a function $\ell:\hat{G}\rightarrow \CC$, 
the condition that $M(\ell) \psd 0$ is exactly saying that $\ell$ is a \emph{positive definite function} on $\hat{G}$. 
While we could use the language of positive definite functions throughout this section, we instead use the more concrete
language of moment matrices. We do this both so that our descriptions are compatible with the literature on polynomial optimization,
and so that they are easy to implement in code. 

The polytope $\cM(G,\cS)$ is just the projection of $\cM(G,\hat{G})$ onto the coordinates cooresponding to $\cS$.  
Alternatively we can think of $\cM(G,\cS)$ as those points in $\CC^{\cS}$ that can be completed to valid moment sequences. 
This suggests describing $\cM(G,\cS)$ in terms of a structured positive semidefinite matrix completion problem.
In this problem the diagonal is given, the entries corresponding to the edges of $\Cay(\hat{G},\cS)$ are given, 
and the goal is to complete the matrix to a positive semidefinite moment matrix.
\begin{cor}
    \label{cor:moment-structured}
    The moment polytope with respect to $\cS$ can be expressed as
    \begin{equation}
    \label{eq:moment-structured}
    \cM(G,\cS) = \{\ell\in \CC^{\cS}: \exists y\in \CC^{\hat{G}} \;\;\text{s.t.}\;\;
\ell_\chi = y_\chi\;\;\text{for all $\chi\in \cS$},\;\;y_{1_{\hat{G}}} =1,\;\; M(y)\psd 0\}.
\end{equation}
\end{cor}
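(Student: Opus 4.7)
The plan is to deduce this directly from Proposition~\ref{lem:moment-obv} by interpreting $\cM(G,\cS)$ as the image of $\cM(G,\hat{G})$ under the coordinate projection $\pi:\CC^{\hat{G}}\rightarrow \CC^{\cS}$ that forgets all coordinates $\chi\notin \cS$. This projection identification is essentially built into Definition~\ref{def:moment-polytope}: the fact that $\cM(G,\cS)$ is defined as the set of $\cS$-marginals of expectation vectors $(\EE_\mu[\chi])_{\chi\in \hat{G}}$ makes the equality $\cM(G,\cS) = \pi(\cM(G,\hat{G}))$ immediate. Once this is observed, the right-hand side of \eqref{eq:moment-structured} is just the definitional rewrite of $\pi(\cM(G,\hat{G}))$ using the semidefinite characterization in Proposition~\ref{lem:moment-obv} (with the dummy variable $y$ playing the role of the full moment vector and $\ell$ playing the role of its projection onto $\cS$-coordinates).

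For the forward inclusion, I would take $\ell\in \cM(G,\cS)$ and, by Definition~\ref{def:moment-polytope}, fix a probability measure $\mu$ on $G$ with $\ell_\chi = \EE_\mu[\chi]$ for $\chi\in \cS$. Define $y\in \CC^{\hat{G}}$ by $y_\chi = \EE_\mu[\chi]$ for all $\chi\in \hat{G}$. Then $y_{1_{\hat{G}}} = \EE_\mu[1] = 1$, and $y_\chi = \ell_\chi$ on $\cS$ by construction; Proposition~\ref{lem:moment-obv} applied to $y$ immediately gives $M(y)\psd 0$. Hence $y$ is a valid witness on the right-hand side of \eqref{eq:moment-structured}.

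For the reverse inclusion, I would take $\ell\in \CC^{\cS}$ together with a witness $y\in \CC^{\hat{G}}$ satisfying $y_{1_{\hat{G}}} = 1$, $M(y)\psd 0$, and $y_\chi = \ell_\chi$ for $\chi\in \cS$. Proposition~\ref{lem:moment-obv} applied in the other direction says $y\in \cM(G,\hat{G})$, so there is a probability measure $\mu$ on $G$ with $y_\chi = \EE_\mu[\chi]$ for all $\chi\in \hat{G}$. Restricting to $\chi\in \cS$ gives $\ell_\chi = \EE_\mu[\chi]$, so $\ell\in \cM(G,\cS)$ by Definition~\ref{def:moment-polytope}.

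There is no genuine obstacle here since all the content has already been absorbed into Proposition~\ref{lem:moment-obv}; the corollary is purely a projection/lifting statement. The only point that deserves a sentence of care is making clear that the existential quantifier over $y$ in \eqref{eq:moment-structured} indeed corresponds to the projection $\pi:\cM(G,\hat{G})\rightarrow \cM(G,\cS)$, which is why we impose the conditions on $y$ (not on $\ell$) and only match $y$ with $\ell$ on the $\cS$-coordinates.
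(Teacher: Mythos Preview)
Your proposal is correct and matches the paper's approach: the corollary is presented there without a separate proof, immediately after the observation that $\cM(G,\cS)$ is the projection of $\cM(G,\hat{G})$ onto the $\cS$-coordinates, so your projection argument from Proposition~\ref{lem:moment-obv} is exactly what is intended.
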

In the example of $\cM(\ZZ_6,\{-1,1\})$ we index characters by $\{0,1,2,3,4,5\}$ (so that $-1=5$). Then 
Corollary~\ref{cor:moment-structured} tells us that $\cM(\ZZ_6,\{-1,1\})$ is the set of $(\ell_5,\ell_{1})$ such that 
\[ \exists y_2,y_{3},y_4\in \CC\;\;\text{s.t.}\;\;  
    \begin{bmatrix} 1 & \ell_1 & y_2 & y_3 & y_{4} & \ell_{5}\\
    \ell_{5} & 1 & \ell_1 & y_2 & y_3 & y_{4}\\
    y_{4} & \ell_{5} & 1 & \ell_1 & y_2 & y_3\\
    y_3 & y_{4} & \ell_{5} & 1 & \ell_1 & y_2\\
    y_2 & y_3 & y_{4} & \ell_{5} & 1 & \ell_1\\
\ell_1 & y_2 & y_3 & y_{4} & \ell_{5} & 1\end{bmatrix} \psd 0.\]
Note that we adopt the convention, throughout, that writing $M(y) \psd 0$ implies that $M(y)$ is Hermitian. This may, in turn, 
imply certain linear equalities on $y$. For instance, in the above example the notation implies 
that $\ell_1 = \bar{\ell_5} = \bar{\ell_{-1}}$ and $y_2 = \bar{y_4} = \bar{y_{-2}}$ and $y_3 = \bar{y_3}$.

\subsubsection{Moment polytopes and matrix completion}

In Proposition~\ref{prop:soschar} we saw that any nonnegative (and hence sum of squares) function with Fourier support $\cS$ 
has a Gram matrix that is sparse with respect to the graph $\Cay(\hat{G},\cS)$. The dual statement is that the moment 
polytope $\cM(G,\cS)$ can be described in terms of an \emph{un}structured positive semidefinite matrix completion problem. In this 
case we are given the diagonal and the entries correponding to the edges of $\Cay(\hat{G},\cS)$ and just need to complete the 
matrix to be positive semidefinite, without requiring it to be a moment matrix.
\begin{prop}
    \label{prop:moment-unstructured}
    The moment polytope with respect to $\cS$ can be expressed as
    \[ 
        \cM(G,\cS) = \{\ell\in \CC^{\cS}: \exists Y\in \HH_+^{\hat{G}}\;\;\text{s.t.}\;\;
Y_{\chi,\chi'} = \ell_{\bar{\chi}\chi'}\;\;\text{whenever $\bar{\chi}\chi'\in \cS$ and $Y_{\chi,\chi}=1$ for all $\chi\in \hat{G}$}\}.\]
\end{prop}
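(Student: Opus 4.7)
The plan is to prove both inclusions, using Corollary~\ref{cor:moment-structured} as the bridge between moment polytopes and the ``structured'' matrix completion problem (where the completing matrix is required to be a moment matrix, i.e., constant along ``anti-diagonals'' $\bar\chi\chi' = \text{const}$).

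The inclusion $\cM(G,\cS)\subseteq \{\ell : \exists Y\ldots\}$ is immediate: if $\ell\in \cM(G,\cS)$, Corollary~\ref{cor:moment-structured} gives a vector $y\in \CC^{\hat G}$ with $y_{1_{\hat G}}=1$, $M(y)\psd 0$, and $y_\chi = \ell_\chi$ for $\chi\in \cS$. Taking $Y:=M(y)$ gives $Y\in \HH^{\hat G}_+$ with $Y_{\chi,\chi}=y_{1_{\hat G}}=1$ and $Y_{\chi,\chi'} = y_{\bar\chi\chi'} = \ell_{\bar\chi\chi'}$ whenever $\bar\chi\chi'\in \cS$.

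The nontrivial direction is the reverse inclusion, and the key idea is a \emph{group-averaging (symmetrization)} trick that forces a generic PSD completion to acquire the Toeplitz-like moment matrix structure. Given any $Y\in \HH^{\hat G}_+$ satisfying the stated conditions, define
\[ \bar Y \;=\; \frac{1}{|\hat G|}\sum_{\chi_0\in \hat G} T_{\chi_0}(Y), \qquad [T_{\chi_0}(Y)]_{\chi,\chi'} := Y_{\chi_0\chi,\;\chi_0\chi'}. \]
Each $T_{\chi_0}(Y)$ is a permutation conjugate of $Y$, hence Hermitian and positive semidefinite, so $\bar Y\psd 0$. The crucial observation is that $\overline{\chi_0\chi}\cdot(\chi_0\chi') = \bar\chi\chi'$ for all $\chi_0$, so the entry $\bar Y_{\chi,\chi'}$ depends only on the product $\bar\chi\chi'$; that is, $\bar Y = M(y)$ for some $y\in \CC^{\hat G}$.

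It remains to verify that $y$ matches $\ell$ on $\cS$ and that $y_{1_{\hat G}}=1$, so that Corollary~\ref{cor:moment-structured} applies and yields $\ell\in \cM(G,\cS)$. For the diagonal, every $\chi_0$ gives $T_{\chi_0}(Y)_{\chi,\chi} = Y_{\chi_0\chi,\chi_0\chi}=1$ by hypothesis, so $\bar Y_{\chi,\chi}=1$, i.e., $y_{1_{\hat G}}=1$. For $\chi\in \cS$, pick any $\chi,\chi'\in \hat G$ with $\bar\chi\chi' = \chi$; then for every $\chi_0$ we have $\overline{\chi_0\chi}\cdot\chi_0\chi' = \chi\in \cS$, so $Y_{\chi_0\chi,\chi_0\chi'} = \ell_\chi$, hence $y_\chi = \bar Y_{\chi,\chi'} = \ell_\chi$. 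I expect the main obstacle to be purely notational (keeping multiplicative abelian notation, conjugates, and the distinction between ``edge entries'' and ``free entries'' of $Y$ straight); once the averaging construction is set up, the verification is a one-line calculation made possible by the translation-invariance of the Cayley graph and the fact that $Y$ has constant diagonal equal to $1$.
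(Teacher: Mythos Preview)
Your proposal is correct and follows essentially the same approach as the paper: use Corollary~\ref{cor:moment-structured} for the easy inclusion, and for the reverse inclusion average the given $Y$ over the $\hat G$-action by simultaneous row/column translation to force the Toeplitz (moment-matrix) structure, then verify that the diagonal and the $\cS$-entries are preserved by the averaging. One minor notational slip: in the last paragraph you use $\chi$ both for the fixed element of $\cS$ and as an index variable (``For $\chi\in\cS$, pick any $\chi,\chi'\in\hat G$ with $\bar\chi\chi'=\chi$''); rename one of them when you write it up.
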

\begin{proof}[Idea of proof]
    We describe the main idea of the proof, giving the details in Appendix~\ref{app:moment}.
    The key issue is to show that if $\ell\in \CC^{\cS}$ has a completion to a positive semidefinite matrix $Y\in \HH_+^{\hat{G}}$ then $\ell$ 
    also has a completion to a positive semidefinite \emph{moment} matrix $M(y)$ (for some $y\in \CC^{\hat{G}}$). 
    This can be established by observing that 
    the group $\hat{G}$ acts on the rows and columns of Hermitian matrices $\HH^{\hat{G}}$ by permutations. This action fixes, 
    pointwise, positive semidefinite moment matrices.  Averaging the orbit of $Y$ under this group action gives a moment 
    matrix with the desired properties.
\end{proof}
Recall that Theorem~\ref{thm:mainsos} gave a combinatorial 
condition under which any nonnegative function with Fourier support $\cS$ is not just a sum of squares, but a sum of squares of 
functions with Fourier support $\cT$. The dual version says that under the same combinatorial condition, to check that $\ell\in \cM(G,\cS)$
we are not required to complete it to a full positive semidefinite moment matrix. Instead it is enough to complete it to a truncated moment matrix $M_{\cT}(y)$
for some $y\in \CC^{\cT^{-1}\cT}$. 
\begin{dualthm}{thm:mainsos}
    \label{thm:moment-main}
    Let $\cS$ be a symmetric subset of $\hat{G}$. If  $\Cay(\hat{G},\cS)$ has a chordal cover 
    with Fourier support $\cT\subseteq \hat{G}$ then 
    \begin{equation}
        \label{eq:moment-main}
        \cM(G,\cS) = \{\ell\in \CC^{\cS}: \exists y\in \CC^{\cT^{-1}\cT}\;\;\text{s.t.}\;\;
            y_\chi = \ell_\chi\;\;\text{for all $\chi\in \cS$},\;\;y_{1_{\hat{G}}}=1,\;\;\text{and}\;\;
        M_{\cT}(y) \psd 0\}.
\end{equation}
\end{dualthm}
\begin{proof}[Idea of proof]
    Again we summarize the key idea, deferring a detailed proof to Appendix~\ref{app:moment}. The main issue is to show that 
    being able to complete $\ell$ to a truncated moment matrix $M_{\cT}(y)$ for some $y\in \CC^{\cT^{-1}\cT}$
    implies we can complete it to a positive semidefinite matrix $Y\in \HH^{\hat{G}}_+$. Proposition~\ref{prop:moment-unstructured}
    then implies we can complete it to a positive semidefinite moment matrix. 

    The key observation, analogous to the translation of frequencies idea in the proof of Theorem~\ref{thm:mainsos}, 
    is that given $y\in \CC^{\cT^{-1}\cT}$, the partial moment matrices $M_{\chi \cT}(y)$ for any $\chi\in \hat{G}$ 
    are all the same. Hence imposing that $M_{\cT}(y) \psd 0$, implies that $M_{\chi\cT}(y) \psd 0$ for all $\chi\in \hat{G}$. 
    Since $\Cay(\hat{G},\cS)$ has a chordal cover $\Gamma$ with Fourier support $\cT$, we construct from $y$ a partial matrix supported
    on the maximal cliques of $\Gamma$. The conditions that $M_{\chi\cT}(y)\psd 0$ for all $\chi\in \hat{G}$ are enough to show
    that all the principal submatrices supported on maximal cliques of $\Gamma$ are positive semidefinite. The 
    chordal matrix completion result (Theorem~\ref{thm:completion}) completes the proof.
\end{proof}
\begin{example2}[Example~\ref{eg:hexagonsos} cont.]
Recall that the Cayley graph $\Cay(\hat{\ZZ_6},\{-1,1\})$ is the $6$-cycle. We label
the elements of $\hat{\ZZ_6}$ by $\{0,1,2,3,4,5\}$. The Cayley graph has a chordal cover (see Figure~\ref{fig:example_hexagon})
with Fourier support $\cT = \{-1,0,1,3\} = \{5,0,1,3\}$ (in $\ZZ_6$). Observe that $\cT^{-1}\cT = \hat{\ZZ_6}$.  
Applying Theorem~\ref{thm:moment-main} we see that 
\[ \cM(\ZZ_6,\{-1,1\}) = \left\{(\ell_5,\ell_{1}): \exists y_2,y_3,y_4\in \CC\;\;\text{s.t.}\;\;
        \begin{bmatrix} 1 & \ell_1 & y_3 & \ell_{5}\\
        \ell_{5} & 1 & y_2 & y_{4}\\
        y_3 & y_{4} & 1 & y_2\\
\ell_1 & y_2 & y_{4} & 1\end{bmatrix}\psd 0\right\}.\]
    \end{example2}


\subsection{Real sums-of-squares and moment polytopes}
\label{sec:real}

The main results of Sections~\ref{sec:sos} and~\ref{sec:moment} work
with sums of Hermitian squares of complex-valued functions and complex Hermitian 
moment matrices respectively. While this is convenient mathematically, computationally
it is desirable to work with real-valued functions and real symmetric moment matrices.
In this section we give real versions of Theorem~\ref{thm:mainsos} and 
Theorem~\ref{thm:moment-main}.
These are the forms most suited to implementation and the forms
we use when discussing the examples in Sections~\ref{sec:maxcut} 
and~\ref{sec:cyclic} to follow.

\paragraph{Basic observations}
The main additional observation we make is that the dual group
$\hat{G}$ consists of two types of characters: those that are
real-valued, and those that are not. It is helpful to think
of this decomposition in terms of the involution $\chi \mapsto \chi^{-1}=\bar{\chi}$
on $\hat{G}$. Real-valued characters are those that are fixed by inversion (complex conjuation). 
The remaining characters come in inverse (complex conjugate) pairs. 

To fix notation let $\hat{G}_0$ denote 
the real-valued characters and $\hat{G}_{-1}\cup \hat{G}_{1}$ be a 
fixed partiton of the remaining characters into conjugate pairs. In particular $\hat{G}_{-1} = \hat{G}_1^{-1}$. 
If $\cS\subseteq\hat{G}$ is symmetric (i.e.\ $\cS^{-1} = \cS$) then 
inversion restricts to an involution on $\cS$ and so we have the decomposition
$\cS = \cS_0 \cup \cS_{-1} \cup \cS_{1}$ where $\cS_i = \hat{G}_i \cap \cS$ for $i=-1,0,1$. 

\paragraph{Sums-of-squares}
Suppose $f$ is a sum of Hermitian squares of functions $f_j:G\rightarrow \CC$, each 
supported on $\cT \subseteq \hat{G}$. Then $f$ can be expressed as a sum-of-squares
of real-valued functions $\Re[f_j]$ and $\Im[f_j]$ as
\[ f = \sum_j |f_j|^2 = \sum_j\left(\Re[f_j]^2 + \Im[f_j]^2\right).\]
Clearly $\Re[f_j]$ and $\Im[f_j]$ are supported on the symmetric subset 
$\cT \cup \cT^{-1}$ of characters. As such, the real analogue of Theorem~\ref{thm:mainsos}
is the following, the only modification being that we insist that $\cT$ is symmetric.
\begin{thm}
    Let $\cS\subseteq \hat{G}$ be symmetric and let $f:G\rightarrow \RR$ be a non-negative function supported on $\cS$. 
    If $\cT\subseteq \hat{G}$ is symmetric and $\Cay(\hat{G},\cS)$ has a chordal cover with Fourier support $\cT$ then 
    $f$ is a sum of squares of \emph{real-valued} functions supported on $\cT$. 
\end{thm}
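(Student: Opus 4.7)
The plan is to reduce immediately to the complex version, Theorem~\ref{thm:mainsos}, and then take real and imaginary parts character-by-character. The paragraph preceding the statement essentially sketches this: the only thing we must check carefully is that taking real and imaginary parts cannot enlarge the Fourier support outside of $\cT$, which is where the hypothesis that $\cT$ is symmetric comes in.

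More concretely, I would first apply Theorem~\ref{thm:mainsos} to $f$, obtaining a representation
\[ f = \sum_j |f_j|^2, \quad \supp f_j \subseteq \cT, \]
with each $f_j:G\rightarrow \CC$. Writing $f_j = \Re[f_j] + \imath \Im[f_j]$, a direct computation (using that $f_j$ takes complex values but its squared modulus is the sum of squared real and imaginary parts) gives
\[ |f_j|^2 = \Re[f_j]^2 + \Im[f_j]^2, \]
so
\[ f = \sum_j \bigl(\Re[f_j]^2 + \Im[f_j]^2\bigr), \]
which is manifestly a sum of squares of real-valued functions on $G$.

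It remains to verify that each of $\Re[f_j]$ and $\Im[f_j]$ has Fourier support inside $\cT$. For this, I would use the identities $\Re[f_j] = \tfrac{1}{2}(f_j + \bar{f_j})$ and $\Im[f_j] = \tfrac{1}{2\imath}(f_j - \bar{f_j})$. If $f_j = \sum_{\chi \in \cT} \hat{f}_j(\chi)\,\chi$, then $\bar{f_j} = \sum_{\chi\in \cT}\overline{\hat{f}_j(\chi)}\,\bar{\chi}$, whose Fourier support lies in $\cT^{-1}$. Hence $\Re[f_j]$ and $\Im[f_j]$ are supported on $\cT \cup \cT^{-1}$, and the symmetry hypothesis $\cT = \cT^{-1}$ gives exactly what we want.

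There is no real obstacle here: the single nontrivial step is the observation that Fourier supports pass to complex conjugates via inversion of characters, so symmetry of $\cT$ is the precise condition needed to keep the real-valued squares supported on $\cT$. If one wanted a stronger statement without assuming $\cT$ symmetric, the same argument would still yield a sum-of-squares representation with real-valued summands supported on $\cT \cup \cT^{-1}$.
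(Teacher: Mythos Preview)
Your proposal is correct and follows exactly the same approach as the paper: apply Theorem~\ref{thm:mainsos} to get a complex sum-of-squares representation supported on $\cT$, split each $|f_j|^2$ as $\Re[f_j]^2 + \Im[f_j]^2$, and observe that $\Re[f_j]$ and $\Im[f_j]$ are supported on $\cT\cup\cT^{-1}=\cT$ by the symmetry hypothesis. Your write-up is in fact slightly more detailed than the paper's, which simply asserts that the real and imaginary parts are ``clearly'' supported on $\cT\cup\cT^{-1}$.
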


\paragraph{Real moment matrices and moment polytopes}
We now develop the real analogue of moment polytopes and moment matrices. The
discussion is more explicit (and more involved) than was required for the sum-of-squares viewpoint. 
We begin by defining real moment polytopes. These are just linear transformations 
of the moment polytopes of Section~\ref{sec:moment}. Throughout this section for any symmetric $\cS\subseteq \hat{G}$
partitioned as $\cS_0 \cup \cS_{-1}\cup \cS_1$ fix a linear map $\cR_{\cS}: \CC^{\cS}\rightarrow \RR^{|\cS|}$
defined by
\[ \cR_{\cS}(\ell) = \left(\,(\ell_{\chi})_{\chi\in \cS_0}, (\Re[\ell_\chi],\Im[\ell_\chi])_{\chi\in \cS_1}\,\right).\]
Observe that $\cR_{\cS}$ depends on the partitioning of $\cS$.
\begin{defn}
    \label{def:real-moment}
    If $\cS\subseteq\hat{G}$ is symmetric and partitioned as $\cS_0\cup\cS_{-1}\cup\cS_{1}$, 
    the \emph{real moment polytope} with respect to $\cS$ is the image of $\cM(G,\cS)$ under $\cR_{\cS}$, i.e.\
    \[ \cM^{\RR}(G,\cS) = \cR_{\cS}(\cM(G,\cS)).\]
\end{defn} 
This is a polytope in $\RR^{|\cS|}$ that is affinely isomorphic to $\cM(G,\cS)$. 
For example, in the case where $G = \ZZ_6$ and $\cS=\{-1,1\}$ is decomposed as $\cS_{-1} = \{-1\}$ and $\cS_1 = \{1\}$, 
the real moment polytope is 
\[ \cM^{\RR}(\ZZ_6,\{-1,1\}) = \conv\{(\cos(\textstyle{\frac{2\pi k}{6}}),\sin(\textstyle{\frac{2\pi k}{6}})): k\in \ZZ_6\},\]
the regular hexagon in the plane.

We have a description of $\cM(G,\cS)$, and hence of $\cM^{\RR}(G,\cS)$ in terms of Hermitian positive semidefinite matrices. Using this it is straightforward to 
give a description in terms of real symmetric positive semidefinite matrices
of twice the size (via~\eqref{eq:goemans} in Appendix~\ref{app:real}).
Our aim is to describe the real moment polytope $\cM^{\RR}(G,\cS)$ in terms of real symmetric 
positive semidefinite matrices \emph{without increasing the size of the description}. It turns out that we can 
do this whenever $\cT$ has a property that is related to, but less restrictive than, being symmetric.
\begin{defn}
    \label{defn:inv}
    A subset $\cT \subseteq \hat{G}$ has an \emph{equalizing involution} if there is an involution $\sigma: \cT \rightarrow \cT$ 
    such that $\sigma(\chi)\chi = \sigma(\chi')\chi'$ for all $\chi,\chi'\in \hat{G}$.
\end{defn}
Observe that if $\cT$ is symmetric, then the map that sends every element of $\cT$ to its inverse 
satisfies $\chi^{-1}\chi = (\chi')^{-1}\chi' = 1_{\hat{G}}$ for all $\chi,\chi'\in \cT$. Hence 
any symmetric subset of $\hat{G}$ has an equalizing involution. 
An example of a set that has an equalizing involution but is not symmetric is
$\cT = \{0,1,2,3\}\subseteq \hat{\ZZ_5}$. Then $\cT$ is not symmetric and, furthermore, 
there is no $k\in \hat{\ZZ_5}$ such that $k+\cT$ is symmetric. Nevertheless,
$\cT$ does have an equalizing involution given by $\sigma(0) = 3$, 
$\sigma(3)=0$, $\sigma(1)=2$ and $\sigma(2)=1$ since $\sigma(k)+k = 3$ for all $k\in \cT$. 

Our main result in this section is the following.
\begin{thm}
    \label{thm:moment-real}
    Let $\cS\subseteq\hat{G}$ be symmetric and let $\cT\subseteq\hat{G}$ have an equalizing involution $\sigma$. If $\Cay(\hat{G},\cS)$
    has a chordal cover with Fourier support $\cT$ then
    \begin{align*}
        \cM^{\RR}(G,\cS)  = \left\{\right.\cR_{\cS}(\ell)\in \RR^{|\cS|}: &\exists y\in \CC^{\cT^{-1}\cT}\;\text{s.t.}\;\;y_{\bar{\chi}} = \bar{y_\chi}\;\;
        \text{for all $\chi\in \cT^{-1}\cT$,}\\
        & \left.y_\chi = \ell_\chi\;\;\text{for all $\chi\in \cS$},\;\;y_{1_{\hat{G}}}=1,\;\;\text{and}\; 
    [\,\Re[y_{\bar{\chi}\chi'}] - \Im[y_{\sigma(\bar{\chi})\chi'}]\,]_{\chi,\chi'\in \cT} \psd 0\right\}.
    \end{align*}
\end{thm}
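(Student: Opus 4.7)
The plan is to start from the complex Hermitian description of $\cM(G,\cS)$ provided by Theorem~\ref{thm:moment-main} and apply the real-linear isomorphism $\cR_{\cS}$. The equality $\cM^{\RR}(G,\cS) = \cR_{\cS}(\cM(G,\cS))$ then reduces the theorem to showing that, for any $y\in \CC^{\cT^{-1}\cT}$ satisfying $y_{\bar\chi} = \overline{y_\chi}$, the Hermitian PSD condition $M_{\cT}(y)\psd 0$ is equivalent to the real symmetric PSD condition $R\psd 0$, where $R := [\,\Re[y_{\bar\chi\chi'}] - \Im[y_{\sigma(\bar\chi)\chi'}]\,]_{\chi,\chi'\in \cT}$, using the natural extension $\sigma(\bar\chi) := \overline{\sigma(\chi)}$.

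The central observation is that the equalizing involution induces a $\ZZ_2$-symmetry on $M_{\cT}(y)$. Setting $\tau := \sigma(\chi)\chi$ (the common value as $\chi$ ranges over $\cT$) and letting $P\in \RR^{\cT\times \cT}$ be the permutation matrix associated to $\sigma$, a direct computation using $\overline{\sigma(\chi)}\sigma(\chi') = \bar\tau\chi\cdot \tau\overline{\chi'} = \chi\overline{\chi'}$ together with the Hermitian constraint $y_{\bar g} = \overline{y_g}$ yields
\[ P M_{\cT}(y)\,P = \overline{M_{\cT}(y)}.\]
Writing $M_{\cT}(y) = A + iB$ with $A$ real symmetric and $B$ real antisymmetric, this symmetry is equivalent to the two relations $PA = AP$ and $PB = -BP$.

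Now introduce the unitary matrix $U := \tfrac{1+i}{2}I + \tfrac{1-i}{2}P$ on $\CC^{\cT}$. A direct expansion, exploiting $P^2 = I$ and the commutation/anticommutation relations above, simplifies to
\[ U^* M_{\cT}(y)\,U = A - PB,\]
which is a real symmetric matrix. By unitary invariance, $M_{\cT}(y)\psd 0$ (as a Hermitian matrix) if and only if $A - PB\psd 0$ (as a real symmetric matrix). Finally, the entries of $A - PB$ are $\Re[y_{\bar\chi\chi'}] - B_{\sigma(\chi),\chi'} = \Re[y_{\bar\chi\chi'}] - \Im[y_{\overline{\sigma(\chi)}\chi'}]$, which with the convention $\sigma(\bar\chi) := \overline{\sigma(\chi)}$ is exactly the matrix $R$ of the statement. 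The main obstacle I foresee is purely bookkeeping: one must verify that every index $\overline{\sigma(\chi)}\chi'$ genuinely lies in $\cT^{-1}\cT$ where $y$ is defined (which holds since $\overline{\sigma(\chi)}\in \cT^{-1}$ and $\chi'\in \cT$), and track signs carefully through the expansion of $U^* M_{\cT}(y)\,U$ to confirm the exact form claimed.
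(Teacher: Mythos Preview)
Your proof is correct and follows the same overall strategy as the paper: start from Theorem~\ref{thm:moment-main}, identify the $\ZZ_2$-symmetry $P\,M_{\cT}(y)\,P=\overline{M_{\cT}(y)}$ coming from the equalizing involution, and use it to replace the Hermitian PSD constraint by a real symmetric one of the same size.

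The only genuine difference is in how the Hermitian-to-real reduction is carried out. The paper isolates this step as a standalone Lemma~\ref{lem:real}: it first passes to the $2|\cT|\times 2|\cT|$ real embedding $\left[\begin{smallmatrix}\Re[L] & \Im[L]\\ -\Im[L] & \Re[L]\end{smallmatrix}\right]$ and then block-diagonalizes via the orthogonal matrix $Q=\tfrac{1}{\sqrt{2}}\left[\begin{smallmatrix}I & J\\ -J & I\end{smallmatrix}\right]$ to obtain two copies of $\Re[L]-J\Im[L]$. You instead stay at size $|\cT|$ throughout by directly exhibiting the unitary $U=\tfrac{1+i}{2}I+\tfrac{1-i}{2}P$ on $\CC^{\cT}$ with $U^*M_{\cT}(y)\,U=A-PB$. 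Your route is a little more elementary (no size doubling), while the paper's formulation has the advantage of packaging the reduction as a reusable lemma that applies to any real subspace $\cL\subset\HH^d$ on which complex conjugation acts by congruence with an orthogonal involution.
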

Note that the main change between Theorem~\ref{thm:moment-real} and Theorem~\ref{thm:moment-main}
is that we have replaced the condition that the (Hermitian) truncated moment matrix $M_\cT(y)$ be positive semidefinite
with the condition that a \emph{real symmetric} matrix indexed by $\cT$ be positive semidefinite. 
We also explicitly add the conjugate symmetry constraint that $y_{\bar{\chi}} = \bar{y_\chi}$, which was 
implied by $M_{\cT}(y)$ being Hermitian in Theorem~\ref{thm:moment-main}. 

\begin{example}
    Before giving a proof, we apply Theorem~\ref{thm:moment-real} to the case of
$\cM^{\RR}(\ZZ_6,\{-1,1\})$, the regular hexagon in the plane. Recall that in
this case we can take $\cT = \{0,1,3,5\}$ which is symmetric. Hence 
$\cT$ has $\sigma(\chi) = \chi^{-1}$ as an equalizing involution. Decomposing
$\hat{G}$ as $\hat{G}_0 = \{0,3\}$ and, for instance $\hat{G}_{1} = \{1,2\}$
and $\hat{G}_{-1} = \{4,5\}$ we have that $\cS_0 = \{0\}$ and $\cS_1 =
\{1\}$. Note also that $\cT^{-1}\cT = \hat{G}$ in this case, giving
$(\cT^{-1}\cT)_i = \hat{G}_i$ for $i=-1,0,1$.  Ordering the elements of $\cT$
as $(0,1,3,5)$ and writing $u_j = \Re[\ell_j]$, $v_j=\Im[\ell_j]$, $w_j=\Re[y_j]$ and $x_j=\Im[y_j]$.
we see that
\[ [\Re[y_{\bar{\chi}\chi'}] - \Im[y_{\sigma(\bar{\chi})\chi'}]]_{\chi,\chi'\in \cT} = [w_{\bar{\chi}\chi'} - x_{\chi\chi'}]_{\chi,\chi'\in \cT} = 
\begin{bmatrix} w_0-x_0 & w_1-x_1 & w_3-x_3 & w_{5}-x_{5}\\
w_{5}-x_1 & w_{0}-x_2 & w_2-x_{4} & w_{4} - x_0\\
w_3-x_3 & w_{4}-x_{4} & w_0-x_0 & w_2-x_2\\
w_1-x_{5}& w_2-x_0 & w_{4}-x_{2} & w_0 - x_{4}\end{bmatrix}.\]
    The conjugate symmetry constraint on $y$ implies that $x_0=x_3=0$, $w_1=w_5$, $x_1=-x_5$, $w_2=w_4$, and $x_2=-x_4$. Applying these
    and the constraints that $u_1=w_1$, $v_1=x_1$ and $w_0=1$, we
obtain a description of the regular hexagon in the plane as 
\[ \cM^{\RR}(\ZZ_6,\{-1,1\}) = \left\{(u_1,v_1): \exists w_2,w_3,x_2\in \RR\;\;\text{s.t.}\;\;
\begin{bmatrix} 1 & u_1-v_1 & w_3 & u_1+v_1\\
u_1-v_1 & 1-x_2 & w_2+x_2 & w_2\\
w_3 & w_2+x_2 & 1 & w_2-x_2\\
u_1+v_1 & w_2 & w_2-x_2 & 1+x_2\end{bmatrix} \psd 0\right\}.\]
    \end{example}
        \begin{proof}[Proof of Theorem~\ref{thm:moment-real}]
        We use a novel result that allows us to write certain complex Hermitian 
    linear matrix inequalities as real symmetric linear 
    matrix inequalities of the same size. The key requirement is that 
    restricted to the subspace of Hermitian matrices of interest, 
    entry-wise complex conjugation can be expressed as congruence by an
    orthogonal symmetric matrix. 
\begin{lem}
    \label{lem:real}
    Let $\cL$ be a subspace (over the reals) of $\HH^d$. Suppose there is 
    some orthogonal $J\in O(d)$ such that $J^2=I$ and 
    \[ JLJ^T = \bar{L}\quad\text{for all $L\in \cL$,}\]
    i.e.\ congruence by $J$ restricted to $\cL$ is entry-wise 
    complex conjugation. Then 
    \[ \{ L\in \cL: L \in \HH^d_+\} = 
    \{ L\in \cL: \Re[L] - J\Im[L] \in \S_+^d\}.\]
\end{lem}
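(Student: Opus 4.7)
The plan is to diagonalize the involution $J$ and reduce to an explicit block-matrix computation. Since $J\in O(d)$ satisfies $J^2=I$, we have $J=J^T=J^{-1}$, so $J$ is real symmetric with spectrum in $\{-1,+1\}$. Choose a real orthogonal $U$ such that $U^T J U = \diag(I_k,-I_{d-k})$ for some $k$. Conjugation by $U$ is real, hence commutes with entrywise complex conjugation; it maps $\cL$ to another subspace of $\HH^d$ on which the hypothesis $JLJ^T=\bar L$ still holds, and it preserves both complex-Hermitian and real-symmetric positive semidefiniteness. So it suffices to prove the lemma assuming $J=\diag(I_k,-I_{d-k})$.

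Next I would decode the hypothesis $JLJ=\bar L$ in this basis. Writing $L=\begin{pmatrix}A & B \\ B^* & D\end{pmatrix}$ in blocks of size $k$ and $d-k$, the identity forces $\bar A=A$ and $\bar D=D$ (so $A,D$ are real symmetric) and $\bar B = -B$ (so $B=iC$ for some real matrix $C$). Then $\Re[L]=\diag(A,D)$, $\Im[L]=\begin{pmatrix}0 & C \\ -C^T & 0\end{pmatrix}$, and a short calculation gives
\[ \Re[L]-J\Im[L] = \begin{pmatrix}A & -C \\ -C^T & D\end{pmatrix}, \]
which is manifestly real symmetric, as required.

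The remaining step is a unitary similarity. Let $V=\diag(I_k, iI_{d-k})$, which is unitary. Multiplying out,
\[ V^* L V \;=\; \begin{pmatrix}I & 0 \\ 0 & -iI\end{pmatrix}\begin{pmatrix}A & iC \\ -iC^T & D\end{pmatrix}\begin{pmatrix}I & 0 \\ 0 & iI\end{pmatrix} \;=\; \begin{pmatrix}A & -C \\ -C^T & D\end{pmatrix} \;=\; \Re[L]-J\Im[L]. \]
Since $V$ is unitary, $L$ and $V^*LV$ are Hermitian-congruent and have the same spectrum. Thus $L\psd 0$ as a Hermitian matrix if and only if $\Re[L]-J\Im[L]\psd 0$ as a real symmetric matrix, establishing both inclusions of the claimed set equality.

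I do not expect any serious obstacle beyond the bookkeeping of the diagonalization step. The substantive content is the identity $V^*LV=\Re[L]-J\Im[L]$, which rests on two consequences of $JLJ^T=\bar L$: the purely-imaginary structure of the off-diagonal block $B=iC$ (used in the direct computation), and the anticommutation $J\Im[L]+\Im[L]J=0$ (which guarantees that $J\Im[L]$ is symmetric, so that $\Re[L]-J\Im[L]$ lies in $\S^d$ in the first place).
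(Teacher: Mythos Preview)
Your proof is correct, but it takes a genuinely different route from the paper's. The paper never diagonalizes $J$. Instead it first doubles the dimension: from $L\psd 0 \Leftrightarrow \bar L\psd 0$ it passes to the block-diagonal $\left[\begin{smallmatrix}L & 0\\0 & \bar L\end{smallmatrix}\right]\in\HH^{2d}_+$, then conjugates by the standard unitary $\frac{1}{\sqrt{2}}\left[\begin{smallmatrix}I & I\\ iI & -iI\end{smallmatrix}\right]$ to obtain the real $2d\times 2d$ matrix $\left[\begin{smallmatrix}\Re L & \Im L\\ -\Im L & \Re L\end{smallmatrix}\right]$. It then uses the commutation/anticommutation relations $J\Re L=\Re L\,J$ and $J\Im L=-\Im L\,J$ (derived exactly as you note) to show that conjugation by the real orthogonal $Q=\frac{1}{\sqrt{2}}\left[\begin{smallmatrix}I & J\\ -J & I\end{smallmatrix}\right]$ block-diagonalizes this to $\diag(\Re L-J\Im L,\;\Re L-J\Im L)$.

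Your approach stays in dimension $d$ throughout: after the orthogonal reduction to $J=\diag(I_k,-I_{d-k})$, a single diagonal unitary $V=\diag(I_k,iI_{d-k})$ (effectively a unitary square root of $J$) carries $L$ directly to $\Re L-J\Im L$. This is shorter and more elementary, at the cost of the preliminary diagonalization bookkeeping (which you handle correctly: real $U$ commutes with complex conjugation and intertwines both the hypothesis and the conclusion). The paper's approach avoids choosing a basis for $J$ and makes the connection to the familiar $\HH^d_+\hookrightarrow\S^{2d}_+$ embedding explicit, which is conceptually useful in the surrounding discussion but not logically necessary.
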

\begin{proof}
    See Appendix~\ref{app:real}.
\end{proof}
Let $\cL$ be the subspace (over the reals) of $\HH^{\cT}$ given by 
    \[ \cL = \{M(y): y\in \CC^{\cT^{-1}\cT},\;y_{\bar{\chi}} = \bar{y_\chi},\;\text{for all $\chi\in \cT^{-1}\cT$}\}.\]
    Let $J$ be the $|\cT|\times |\cT|$ permutation matrix representing the 
    equalizing involution $\sigma:\cT\rightarrow \cT$.  Since $\sigma$ is an involution, $J$ 
    satisfies $J^2=I$. The definition of equalizing involution comes from our desire that $\sigma$ satisfy the relation 
    $\sigma(\chi)\bar{\sigma(\chi')}= \bar{\chi}\chi'$, equivalent to the defining relation in 
    Definition~\ref{defn:inv}, for all $\chi,\chi'\in \cT$. Then for any conjugate symmetric $y$, 
    \begin{align*}
        JM(y)J  = [y_{\bar{\chi}\chi'}]_{\sigma(\chi),\sigma(\chi')\in \cT}& = [y_{\bar{\sigma(\chi)}\sigma(\chi')}]_{\chi,\chi'\in\cT}\\
              & = [\bar{y}_{\sigma(\chi)\bar{\sigma(\chi')}}]_{\chi,\chi'\in \cT}\quad\text{by conjugate symmetry of $y$}\\
              & = [\bar{y}_{\bar{\chi}\chi'}]_{\chi,\chi'\in \cT}\quad\text{since $\sigma$ is an equalizing involution.}
    \end{align*}
    Hence congruence by $J$ corresponds to entry-wise complex conjugation restricted to $\cL$.
    Applying Lemma~\ref{lem:real} we can conclude that for all $y\in \CC^{\cT^{-1}\cT}$,  
    \begin{align*}
        M(y)\psd 0 \quad& \iff\quad y_{\bar{\chi}} = \bar{y_\chi}\;\;\forall \chi\in \cT^{-1}\cT\;\;\text{and}\;\;
        [\Re[y_{\bar{\chi}\chi'}] - \Im[y_{\sigma(\bar{\chi})\chi'}]]_{\chi,\chi'\in \cT} \psd 0.
    \end{align*}
    This completes the proof of Theorem~\ref{thm:moment-real}.
    \end{proof}

\section{Application 1: The case $G=\ZZ_2^n$ and binary quadratic optimization}
\label{sec:maxcut}

In this section we apply the results of Section~\ref{sec:sos} 
to the case
of nonnegative quadratic forms on the vertices of the hypercube
in $n$ dimensions. Dually, the moment polytope of interest in this 
section is the $n$th \emph{cut polytope} $\CUT_n$.
Our main aim is to establish Laurent's
conjecture~\cite[Conjecture 4]{laurent2003lower} that any nonnegative quadratic form 
on the vertices of the hypercube in $n$ dimension is a sum of squares 
of polynomials of degree at most $\lceil n/2\rceil$.

\subsection{Quadratic forms on $\{-1,1\}^n$ and the cut polytope}
Let $G = \{-1,1\}^n$
be the vertices of the hypercube in dimension $n$. View $G$ 
as a group (isomorphic to $\ZZ_2^n$) under componentwise multiplication.
Recall that the characters of $G$ are indexed 
by subsets $S\in 2^{[n]}$ and are the square-free monomials
\[ \chi_S(x) = \prod_{i\in S}x_i \quad\text{for all $x\in G$}.\]

We focus on characterizing nonnegative quadratic funcions on $G$. These
are of particular interest because the problem of maximizing a quadratic
form over $G$ i.e.\
\begin{equation}
    \label{eq:binary-quadratic}
    \max_{x\in G} \sum_{1\leq i<j\leq n}A_{ij} x_ix_j
\end{equation}
includes, for example, the \textsc{max-cut} problem, 
which arises when the symmetric matrix $A_{ij}$ is the Laplacian of 
a (weighted) graph on $n$ vertices. We can 
solve~\eqref{eq:binary-quadratic} by finding the smallest upper
bound on the objective:
\begin{equation}
    \label{eq:binary-quadratic-dual}
    \min_{\gamma} \gamma\quad\text{s.t.}\quad
    \gamma - \sum_{1\leq i<j\leq n}A_{ij}x_ix_j \geq 0\quad\text{for all
        $x\in G$.}
\end{equation}
If we have a characerization of nonnegative functions on $G$ with Fourier support 
$\cS = \{S\in 2^{[n]}: \text{$|S|=0$ of $|S|=2$}\}$ as sums of squares of 
functions supported on $\cT\subseteq \hat{G}$ then we can solve~\eqref{eq:binary-quadratic-dual}
by solving a semidefinite optimization problem of size $|\cT|$.

The dual picture to~\eqref{eq:binary-quadratic-dual} is to consider optimization over the 
moment polytope $\cM(\{-1,1\}^n,\cS\setminus\{\emptyset\})$, known as the \emph{cut polytope}
\[ \CUT_n:= \cM(\{-1,1\}^n,\cS\setminus\{\emptyset\}) = \conv\,\{(x_ix_j)_{1\leq i<j\leq n}: (x_1,x_2,\ldots,x_n)\in \{-1,1\}^n\}.\]
We can solve the binary quadratic optimization problem~\eqref{eq:binary-quadratic} by optimizing the 
linear function defined by $A$ over $\CUT_n$, i.e.\ by solving the linear program
\[ \max_{(\ell_{ij})_{1\leq i<j\leq n}} \sum_{i<j}\ell_{ij}A_{ij}\quad\text{s.t.}\quad (\ell_{ij})_{1\leq i<j\leq n} \in \CUT_n.\]
If we have a PSD lift of the cut polytope $\CUT_n$ of size $|\cT|$ then we can solve this optimization problem
by solving a semidefinite optimization problem of size $|\cT|$.

\subsection{The associated Cayley graph}
\label{sec:cayley-binary}
\begin{figure}
    \begin{center}
        \includegraphics[trim=3cm 19cm 2cm 2cm,clip, scale=0.8]{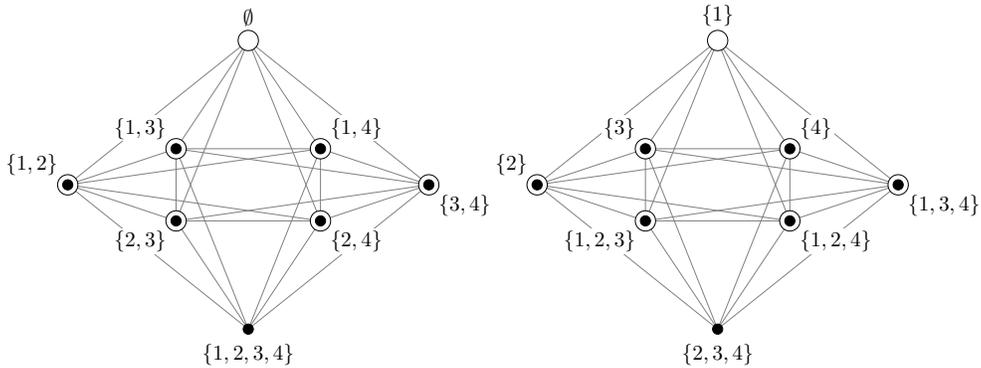}
    \end{center}
    \caption{\label{fig:binary-graph} The Cayley graph $\Cay(\hat{G},\cS)$ for $G = \{-1,1\}^4$ and $\cS = \{S:|S|=0\;\text{or}\;|S|=2\}$.
    The two connected components are $\cT_{\textup{even}}$ (left) and $\cT_{\textup{odd}}$ (right). The vertices 
of $\cT_{\textup{odd}}$ are arranged to correspond to their images in $\cT_{\textup{even}}$ under the graph automorphism $\phi(S) = \{1\}\triangle S$.
We can obtain a chordal cover of $\Cay(\hat{G},\cS)$ by forming maximal cliques on the vertices of $\cT_{\textup{even}}$ marked with filled circles, 
the vertices of $\cT_{\textup{even}}$ marked with open circles, and the images in $\cT_{\textup{odd}}$ of these two cliques under the map $\phi$.}
\end{figure}

To apply the results of Section~\ref{sec:sos} we need to understand the 
graph $\Cay(\hat{G},\cS)$. In the case $n=4$ this graph is shown in Figure~\ref{fig:binary-graph}.
Throughout this section we identify the character $\chi_S\in \hat{G}$
with the subset $S\subseteq [n]$ that indexes it and work exclusively in the 
language of subsets. As such, the vertex set of $\Cay(\hat{G},\cS)$ is
$2^{[n]}$, the collection of all subsets $S\subseteq[n]$. There is an edge between two 
subsets $S,T$ if and only if $|S\triangle T|=2$. This graph is 
often called the \emph{half-cube graph}.

The group operation on characters is multiplication of functions, 
which corresponds to taking the symmetric difference of the subsets 
that index the characters. In other words, if $S,T\subseteq [n]$ then 
\[ \chi_{S}(x)\chi_T(x) = \chi_{S\triangle T}(x)\]
where $S\triangle T = (S\setminus T) \cup (T \setminus S)$. As such,
there is an action of $\hat{G}$ on the vertices of $\Cay(\hat{G},\cS)$
by $S\cdot T = S\triangle T$. Furthermore if $\cT\subseteq 2^{[n]}$ 
is a subset of the vertices of $\Cay(\hat{G},\cS)$ we write 
$S\triangle \cT:= \{S\triangle T: T\in \cT\}$.

We now record some simple observations that follow directly from the  
adjacency relation in $\Cay(\hat{G},\cS)$. For convenience of notation, for $k=0,1,\ldots,n$ let
\[ \cT_k = \{S\subseteq [n]: |S|=k\}.\]
Any edge of $\Cay(\hat{G},\cS)$ either has 
both endpoints in $\cT_k$ for some $k$ or one endpoint in $\cT_k$ 
and the other in $\cT_{k+2}$ for some $k$. Consequently, $\Cay(\hat{G},\cS)$
has two connected components
\[ \cT_{\textup{even}} = \cT_0 \cup \cT_2 \cup \cdots \cup \cT_{2\lfloor n/2\rfloor} \quad\text{and}\quad
    \cT_{\textup{odd}} = \cT_1 \cup \cT_3 \cup \cdots \cup \cT_{2\lceil n/2\rceil-1}.\]
    Define a map $\phi:2^{[n]}\rightarrow 2^{[n]}$ by $\phi(S) = \{1\} \triangle S$. 
Since $|\phi(S)\triangle \phi(T)| = |S\triangle T|$ for all $S,T\in 2^{[n]}$
it follows that $\phi$ extends to an automorphism of $\Cay(\hat{G},\cS)$ that 
exchanges $\cT_{\textup{even}}$ and $\cT_{\textup{odd}}$.

\subsection{Applying Theorem~\ref{thm:mainsos}}
\label{sec:application-binary}
To apply Theorem~\ref{thm:mainsos} from Section~\ref{sec:sos} we need to 
find a subset $\cT\subseteq 2^{[n]}$ of vertices such that 
$\Cay(\hat{G},\cS)$ has a chordal cover with Fourier support $\cT$. 
The following result explicitly describes such a collection of vertices.
\begin{prop}
    \label{prop:chordal-cover-binary}
    The graph $\Cay(\hat{G},\cS)$ has a chordal cover with Fourier support
    \begin{equation}
        \label{eq:binary-freq}
        \cT = \begin{cases}
            \cT_0 \cup \cT_2 \cup \cdots \cup \cT_{\lceil n/2\rceil} & 
            \text{if $\lceil n/2 \rceil$ even}\\
            \cT_1 \cup \cT_3 \cup \cdots \cup \cT_{\lceil n/2\rceil} & 
            \text{if $\lceil n/2\rceil$ odd}.
        \end{cases}
    \end{equation}
\end{prop}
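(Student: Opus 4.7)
The plan is to construct an explicit chordal cover of $\Cay(\hat{G},\cS)$ with exactly two maximal cliques per connected component. Set $d = \lceil n/2\rceil$ and let $\tV\subseteq\hat{G}$ be the connected component of $\Cay(\hat{G},\cS)$ containing $\cT$; by Section~\ref{sec:cayley-binary} this is $\cT_{\textup{even}}$ when $d$ is even and $\cT_{\textup{odd}}$ when $d$ is odd. Define
\[\cC_f := \{S\in\tV : |S|\leq d\} = \cT \quad\text{and}\quad \cC_o := \{S\in\tV : |S|\geq d\},\]
so that $\cC_f\cup\cC_o = \tV$ and $\cC_f\cap\cC_o = \cT_d$. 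Let $\Gamma$ be the graph obtained from $\Cay(\hat{G},\cS)$ by adding every edge within each of $\cC_f$, $\cC_o$, $\phi(\cC_f)$, and $\phi(\cC_o)$, where $\phi(S) = \{1\}\triangle S$ is the automorphism exchanging the two connected components.

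First I would verify that $\cC_f$ and $\cC_o$ (together with their $\phi$-images) are precisely the maximal cliques of $\Gamma$ and that $\Gamma$ contains $\Cay(\hat{G},\cS)$. The key observation is that for $S\in\cC_f\setminus\cC_o$ and $T\in\cC_o\setminus\cC_f$ one has $|S|\leq d-2$ and $|T|\geq d+2$, hence $|S\triangle T|\geq 4$, so $S,T$ are not adjacent in $\Cay(\hat{G},\cS)$ and no edge between these sets is ever added to $\Gamma$. The same size-gap argument shows that every original edge of $\Cay(\hat{G},\cS)$ in $\tV$ (whose endpoints differ in size by at most $2$) lies entirely in $\cC_f$ or entirely in $\cC_o$. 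For chordality of $\Gamma$ on $\tV$ I would exhibit a perfect elimination ordering that eliminates $\cC_f\setminus\cC_o$ first, then $\cC_o\setminus\cC_f$, then $\cT_d$: at each step the later neighbors of the eliminated vertex are contained in $\cT_d$ together with later vertices from the same block, all lying within a single clique of $\Gamma$. The other connected component is chordal by transport via $\phi$, and since the two components are disjoint the whole graph $\Gamma$ is chordal.

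The main step, and the technical heart of the argument, is verifying the Fourier support condition. The clique $\cC_f = \cT$ admits the trivial translation $\chi_f = 1_{\hat{G}}$. For $\cC_o$ I would set
\[\chi_o = \begin{cases} [n] & \text{if $n$ is even,}\\ [n]\setminus\{1\} & \text{if $n$ is odd,}\end{cases}\]
noting that $|\chi_o|$ is even in both cases so translation by $\chi_o$ preserves the component. When $n$ is even, $|\chi_o\triangle S| = n - |S|$ for $S\in\cC_o$; since $d\leq |S|\leq n$ and $n-d = d$, these translates lie in $\{0,2,\ldots,d\}$ with parity matching $\cT$. When $n$ is odd, a direct calculation yields $|\chi_o\triangle S| = n-|S|+1$ if $1\in S$ and $n-|S|-1$ if $1\notin S$; the former is at most $n-d+1 = d$, the latter is nonnegative whenever it applies (the case $S=[n]$ forces $1\in S$, switching to the other formula), and both have parity matching $\cT$. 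Hence $\chi_o\triangle\cC_o\subseteq\cT$ in all four residue classes of $n$ modulo $4$. The corresponding translations for $\phi(\cC_f)$ and $\phi(\cC_o)$ are $\{1\}$ and $\{1\}\triangle\chi_o$ respectively, completing the verification that $\Gamma$ is a chordal cover of $\Cay(\hat{G},\cS)$ with Fourier support $\cT$.
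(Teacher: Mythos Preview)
Your proof is correct, but it takes a genuinely different route from the paper's. The paper constructs a finer chordal cover whose maximal cliques on $\tV$ are $\cC_k = \cT_k \cup \cT_{k+2}$ for $k$ ranging over the appropriate parity class (roughly $\lfloor n/2\rfloor$ cliques per component), and then translates each $\cC_k$ individually into $\cT$ using $S_k\in\{\emptyset,[n],\phi([n])\}$ depending on whether $k$ is below or above $\lceil n/2\rceil$ and on the parity of $n$. You instead collapse all the ``low'' layers into a single clique $\cC_f=\cT$ and all the ``high'' layers into a single clique $\cC_o$, so only two translations per component are needed.

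Your coarser cover is cleaner for the purpose at hand: chordality is immediate (a graph that is the edge-union of two cliques with no edges between their private parts is trivially chordal), and only one nontrivial translation $\chi_o$ has to be checked. The paper's finer cover, by contrast, yields cliques of much smaller cardinality (each $\cC_k$ has size $\binom{n}{k}+\binom{n}{k+2}$ rather than roughly $2^{n-1}$), which would matter if one cared about the treewidth of the cover or about block sizes in a clique-by-clique SDP decomposition, but is irrelevant for establishing the Fourier-support statement of the proposition. Your case analysis for $\chi_o\triangle\cC_o\subseteq\cT$ is correct in all four residue classes of $n\bmod 4$; the only delicate point, which you handle, is that when $n$ is odd and $1\notin S$ the formula $n-|S|-1$ could be $-1$ only if $S=[n]$, which forces $1\in S$ and so never occurs.
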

\begin{proof}
    We give a detailed proof in Appendix~\ref{app:binary}. 
\end{proof}
\begin{example}
    To give the flavor of the proof, we discuss the case $n=4$. 
    In this case $\Cay(\hat{G},\cS)$ is shown in Figure~\ref{fig:binary-graph}. 
    Define $\Gamma$ to be the graph with vertex set $2^{[4]}$ and with edges 
    between $S,T\in \cT_{\textup{even}}$ if and only if $||S|-|T||\leq 2$, 
    and edges between $S,T\in \cT_{\textup{odd}}$ if and only if $||\phi(S)|-|\phi(T)|| \leq 2$.
    The graph $\Gamma$ is chordal, with maximal cliques given by
    $\cC_0=\cT_0 \cup \cT_2$, $\cC_2 = \cT_2 \cup \cT_4$, $\phi(\cC_0)$, and $\phi(\cC_2)$. 
    The vertices in cliques $\cC_0$ and $\cC_2$ are indicated by 
    open and filled circles respectively in Figure~\ref{fig:binary-graph}. (The 
    vertices in cliques $\phi(\cC_0)$ and $\phi(\cC_2)$ are similarly marked.)
    If $\cT = \cT_0 \cup \cT_2$ then we can see that $\Gamma$ is a chordal cover of $\Cay(\hat{G},\cS)$
    with Fourier support $\cT$ by observing that $\emptyset \triangle \cC_0 \subseteq \cT$, $\{1,2,3,4\}\triangle \cC_2 \subseteq \cT$, 
    $\phi(\emptyset) \triangle \phi(\cC_0)\subseteq \cT$ and $\phi(\{1,2,3,4\}) \triangle \phi(\cC_2) \subseteq \cT$.
\end{example}

Laurent's conjecture follows directly from
Proposition~\ref{prop:chordal-cover-binary} and Theorem~\ref{thm:mainsos}.
\begin{repthm}{thm:laurent}
    \label{cor:laurent}
    Suppose $f(x) = A_{\emptyset} + \sum_{1\leq i<j\leq n}A_{ij}x_ix_j$ is 
    nonnegative on $G$. Then there is a collection $(h_k)_{k=1}^{|\cT|}$ of functions
    $h_k:G\rightarrow \RR$ each supported on $\cT$ (defined in~\eqref{eq:binary-freq}) such that 
    \[ f(x) = \sum_{k=1}^{|\cT|} h_k(x)^2.\]
    Consequently, any nonnegative quadratic
   form on $G$ is a sum of squares of functions of degree at most $\lceil n/2 \rceil$. 
\end{repthm}
\begin{proof}
    The first assertion follows directly from Proposition~\ref{prop:chordal-cover-binary} 
    and Theorem~\ref{thm:mainsos}. The second assertion
    holds simply because every function supported on $\cT$ has
    degree at most $\lceil n/2\rceil$. 
\end{proof}

The dual version of this result gives a PSD lift of the cut polytope of size $|\cT|$. It follows 
directly from Proposition~\ref{prop:chordal-cover-binary} and Theorem~\ref{thm:moment-main}, and the observation that 
in this case all the characters are real-valued.

\begin{cor}
    \label{cor:cut-polytope}
    The cut polytope $\CUT_n$ has a real PSD lift of size $|\cT|$ 
    given by
    \[ \CUT_n = \left\{\ell\in \RR^{\cS\setminus\emptyset}: \exists y\in \RR^{\cT \triangle \cT} 
    \;\;\text{s.t.}\;\;y_{\emptyset} = 1,\;\; \text{$y_{\{i,j\}} = \ell_{\{i,j\}}$ for $1\leq i<j\leq n$}, 
    \;\;\text{and}\;\;[y_{S\triangle T}]_{S,T\in \cT} \psd 0\right\}\]
    where $\cT$ is defined in~\eqref{eq:binary-freq}.
\end{cor}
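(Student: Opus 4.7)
The plan is to apply Theorem \ref{thm:moment-main} to the symmetric set $\cS \setminus \{\emptyset\}$ using the chordal cover furnished by Proposition \ref{prop:chordal-cover-binary}, and then reduce the resulting Hermitian PSD constraint to a real symmetric one by exploiting the fact that all characters of $\ZZ_2^n$ are real-valued.

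First I would record the key structural observation that every character of $G = \{-1,1\}^n$ is real-valued and its own inverse: since $\chi_S(x)^2 = \chi_{S\triangle S}(x) = \chi_\emptyset(x) = 1$ for all $x \in G$, we have $\overline{\chi_S} = \chi_S^{-1} = \chi_S$. In particular the group operation on $\hat{G}$ is symmetric difference of subsets, so $\cT^{-1}\cT = \cT \triangle \cT$, and the truncated moment matrix entries of Definition \ref{def:moment-matrix} become $[M_\cT(y)]_{S,T} = y_{\overline{\chi_S}\chi_T} = y_{S \triangle T}$. Moreover, since moments of real-valued characters are real, both $\ell$ and $y$ may be taken to be real-valued. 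Formally, the (symmetric) set $\cT$ admits the identity as an equalizing involution in the sense of Definition \ref{defn:inv}, so Theorem \ref{thm:moment-real} collapses the Hermitian PSD constraint $M_\cT(y) \psd 0$ to the real symmetric PSD constraint $[y_{S\triangle T}]_{S,T \in \cT} \psd 0$ of the same size $|\cT|$.

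Second, Proposition \ref{prop:chordal-cover-binary} furnishes a chordal cover of $\Cay(\hat{G}, \cS)$ with Fourier support $\cT$ as in \eqref{eq:binary-freq}. The same cover works for $\Cay(\hat{G}, \cS\setminus\{\emptyset\})$ since these two graphs are identical (the identity character only contributes self-loops, which are not part of a Cayley graph). Applying Theorem \ref{thm:moment-main} to the symmetric set $\cS \setminus \{\emptyset\}$ and then combining with the reduction above, the identification $y_\emptyset = 1$ comes from $y_{1_{\hat{G}}} = 1$, the constraints $y_{\{i,j\}} = \ell_{\{i,j\}}$ for $1\leq i<j\leq n$ come from matching the coordinates indexed by $\cS \setminus \{\emptyset\}$, and the PSD constraint is $[y_{S\triangle T}]_{S,T\in \cT} \psd 0$. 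Since $\CUT_n = \cM(G, \cS \setminus \{\emptyset\})$ by definition, this is exactly the claimed lift.

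I expect no substantive obstacle; the corollary is a bookkeeping exercise combining two results already in hand. The only care required is ensuring (a) that the Hermitian-to-real reduction is made correctly via the identity equalizing involution, so that the lift size remains $|\cT|$, and (b) that the constant character $\emptyset$ is handled consistently so that $y_\emptyset = 1$ reflects both the probability normalization and the absence of $\emptyset$ from the indexing set of $\CUT_n$.
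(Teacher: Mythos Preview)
Your proposal is correct and follows essentially the same route as the paper, which simply says the corollary ``follows directly from Proposition~\ref{prop:chordal-cover-binary} and Theorem~\ref{thm:moment-main}, and the observation that in this case all the characters are real-valued.'' Your only embellishment is to make the Hermitian-to-real reduction explicit by invoking Theorem~\ref{thm:moment-real} with the identity equalizing involution (valid since $\chi_S\chi_S=\chi_\emptyset$ for all $S$); the paper leaves this implicit, since when every character is real the Hermitian truncated moment matrix $M_\cT(y)$ is already real symmetric and no further machinery is needed.
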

In the language used, for example, in~\cite{laurent2003lower} when discussing the Lasserre hierarchy for binary quadratic optimization, 
Corollary~\ref{cor:cut-polytope} could be expressed simply as $Q_{\lceil n/2\rceil} = \CUT_n$
for all $n$.

\section{Application 2: The case $G=\ZZ_N$ and cyclic polytopes}
\label{sec:cyclic}

In this section we apply the results of Section \ref{sec:general} to the case where $G=\ZZ_N$ is the (additive) group of integers modulo $N$. As we will see, this will allow us to obtain a positive semidefinite lift of size $O(d \log (N/d))$ for the regular trigonometric cyclic polytope with $N$ vertices of degree $d$, when $d$ divides $N$.

Recall from Section \ref{sec:prelim}, that the characters of $\ZZ_N$ are indexed by $k \in \ZZ_N$ and are given by:
\[ \chi_k(x) = e^{2i\pi kx/N} \quad \forall x \in \ZZ_N. \]
Thus the Fourier decomposition of a function $f:\ZZ_N\rightarrow \CC$ is given by:
\[ f(x) = \sum_{k \in \ZZ_N} \hat{f}(k) e^{2i\pi kx}. \]
Furthermore, we say that a function $f$ has degree $d$ if it is supported on $\{-d,-(d-1),\dots,d-1,d\}$.

\subsection{The case $\cS = \{-1,0,1\}$: the cycle graph}

In this section we are interested in obtaining sparse sum-of-squares certificates for functions of degree 1 on $\ZZ_N$, i.e., functions supported on $\cS = \{-1,0,1\}$. Note that the real moment polytope $\cM^{\RR}(\ZZ_N,\{-1,1\})$ in this case is just the regular $N$-gon in the plane:
\[
\cM^{\RR}(\ZZ_N,\{-1,1\}) = \conv \Bigl\{ (\cos(2\pi x/N), \sin(2\pi x/N)) : x \in \ZZ_N \Bigr\}.
\]

To obtain sparse sum-of-squares for nonnegative functions of degree 1 we are going to study the Cayley graph $\Cay(\hat{\ZZ_N},\{-1,0,1\})$. Note that this is simply the cycle graph on $N$ vertices, which we will denote by $C_N$ for simplicity. The object of this section is to show that this graph admits a chordal cover with a small number of frequencies.
We show:

\begin{thm}
\label{thm:maincycle}
Let $N$ be a positive integer greater than 2. Then $C_N$ has a chordal cover with frequencies $\cT \subseteq \hat{\ZZ_N}$ where $|\cT| \leq 3 \log_2 N$. More precisely the set $\cT$ can be described explicitly as follows: Let $k_1 < k_2 < \dots < k_l$ be the positions of the nonzero digits in the binary expansion of $N$ so that $N = \sum_{j=1}^l 2^{k_j}$.  Let $k$ be the largest integer such that $2^k < N$ (i.e., $k = k_1 - 1$ if $N$ is a power of two and $k=k_l$ otherwise). Then the set $\cT$ is given by
\begin{equation}
\label{eq:freqsos}
\cT = \{0\} \cup \{-2^i, 2^i, i=0,\dots,k-1\} \cup \left\{\sum_{j=1}^i 2^{k_j}, i=1,\dots,l-2\right\}.
\end{equation}
\end{thm}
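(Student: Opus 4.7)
The plan is to construct an explicit chordal cover $\Gamma$ of $C_N$ and, for each maximal clique $\cC$ of $\Gamma$, to exhibit a translation $\chi_\cC \in \hat{\ZZ_N}$ satisfying $\chi_\cC + \cC \subseteq \cT$. Once such a $\Gamma$ is in hand, the size bound $|\cT| \leq 3\log_2 N$ will follow by an elementary count.

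The power-of-two case $N=2^k$ (where $l=1$) serves as the template. I would build $\Gamma$ by the standard dyadic triangulation: for each level $i=1,2,\ldots,k-1$ and each $a \in \ZZ_N$, add the chord $\{a\cdot 2^i,(a+1)\cdot 2^i\}$; equivalently, at level $i$ one includes the edges of the cycle on multiples of $2^i$. A short perfect-elimination argument---peel off the odd multiples of $2^i$ at level $i$, in increasing $i$---shows $\Gamma$ is chordal and identifies its maximal cliques as the triangles $\cC(x,i) = \{x, x+2^i, x+2^{i+1}\}$ for $0 \leq i \leq k-2$ and $x$ a multiple of $2^{i+1}$. Setting $\chi_{\cC(x,i)} = -(x+2^i)$ sends each such triangle to $\{-2^i, 0, 2^i\}$, which sits inside $\cT = \{0\} \cup \{\pm 2^j : 0 \leq j \leq k-1\}$.

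For general $N$, partition $\ZZ_N$ into blocks $B_j = \{s_{j-1}, \ldots, s_j-1\}$ of sizes $2^{k_j}$, where $s_i = \sum_{j \leq i} 2^{k_j}$. Inside each block apply the dyadic construction above, treating $B_j$ as a $2^{k_j}$-cycle anchored between $s_{j-1}$ and $s_j$; this contributes triangles that translate to $\{-2^i, 0, 2^i\} \subseteq \cT$ exactly as before, since $k_j \leq k$. The remaining task is to triangulate the macroscopic structure linking the blocks, whose natural maximal cliques involve the separators. The key feature of $\cT$ is that the first $l-2$ partial sums $s_1, \ldots, s_{l-2}$ are explicitly included: this allows the ``coarse'' triangles $\{0, s_i, s_{i+1}\}$ for $i \leq l-3$ to fit into $\cT$ via $\chi_\cC = 0$ directly, while other coarse cliques are translated into $\cT$ by choosing $\chi_\cC$ to be an appropriate dyadic element $-2^j$ or partial sum $-s_j$ already present.

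The main difficulty, and the delicate part of the argument, is designing the macro-triangulation so that every maximal clique meeting the final separator $s_{l-1}$---which is \emph{not} in $\cT$---can nonetheless be translated into $\cT$. The mechanism is that $s_{l-1} = N - 2^{k_l}$ lies at distance exactly $2^{k_l}$ from $0$ in $\ZZ_N$, and the largest block $B_l$ has size $2^{k_l}$, a power of two; by arranging the dyadic triangulation of $B_l$ to ``absorb'' $s_{l-1}$---treating it as an interior dyadic vertex inside $B_l$ rather than as a free separator in the macro-triangulation---every clique touching $s_{l-1}$ becomes a within-block triangle of $B_l$ and translates to $\{-2^i, 0, 2^i\}$. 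Verifying that the resulting $\Gamma$ is chordal then reduces to checking that no chordless cycle of length $\geq 4$ crosses a block boundary, which follows by a direct inspection of the few cross-block cycle types that can arise. Finally, counting gives $|\cT| \leq 1 + 2k + (l-2) = 2k + l - 1$; since the $k_j$ are distinct elements of $\{0, \ldots, k\}$ we have $l \leq k+1$, yielding $|\cT| \leq 3k < 3\log_2 N$ because $2^k < N$.
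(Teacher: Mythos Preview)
Your power-of-two template and the overall strategy (dyadic triangulation within blocks, then a macro-triangulation of the separators) are on the right track, and your final cardinality count is correct. However, the construction as written has a genuine gap in the macro step.

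With your block ordering---smallest block first, so the separators are $0,s_1,\ldots,s_{l-1}$ with $s_i=\sum_{j\le i}2^{k_j}$---the macro polygon on $\{0,s_1,\ldots,s_{l-1}\}$ cannot in general be triangulated so that every triangle translates into $\cT$. Your ``absorption'' idea does not resolve this: after triangulating $B_l$ dyadically, $s_{l-1}$ is still an endpoint of the macro edge $\{s_{l-2},s_{l-1}\}$, so some macro triangle must contain it. Concretely, take $N=45=2^0+2^2+2^3+2^5$, so $\cT=\{0,\pm1,\pm2,\pm4,\pm8,\pm16,1,5\}$ and the macro quadrilateral is $\{0,1,5,13\}$. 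Neither diagonal works: with diagonal $\{0,5\}$ the triangle $\{0,5,13\}$ has no translate inside $\cT$, and with diagonal $\{1,13\}$ the triangle $\{0,1,13\}$ has no translate inside $\cT$ (one checks all $a\in\cT$ directly).

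The paper's proof avoids this by a top-down recursion that splits off the \emph{largest} power of two first. Unrolled, this places the blocks in \emph{decreasing} size order along the cycle, so the separators are $0,\,N-s_{l-1},\,N-s_{l-2},\ldots,\,N-s_1$. The fan-from-$0$ triangulation of this macro polygon gives triangles $\{0,\,N-s_i,\,N-s_{i-1}\}$ for $i=2,\ldots,l-1$; translating by $s_{i-1}$ yields $\{s_{i-1},\,-2^{k_i},\,0\}$, and now every entry is manifestly in $\cT$ since $i-1\le l-2$ and $k_i<k$. For $N=45$ this gives macro triangles $\{0,32,40\}$ and $\{0,40,44\}$, which translate (by $5$ and by $1$) to $\{5,-8,0\}$ and $\{1,-4,0\}$. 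The point is that the ordering of the blocks is what makes the partial sums $s_1,\ldots,s_{l-2}$ appear in the translated macro triangles; your ordering produces the complementary partial sums, which are not in $\cT$.
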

\begin{proof}
The chordal cover is constructed by induction on $N$, see Appendix \ref{app:cylic} for the details. Figure \ref{fig:triangulation_cycle_8_16} shows the chordal cover for $N=8$ and $N=16$.
\end{proof}

\begin{figure}[ht]
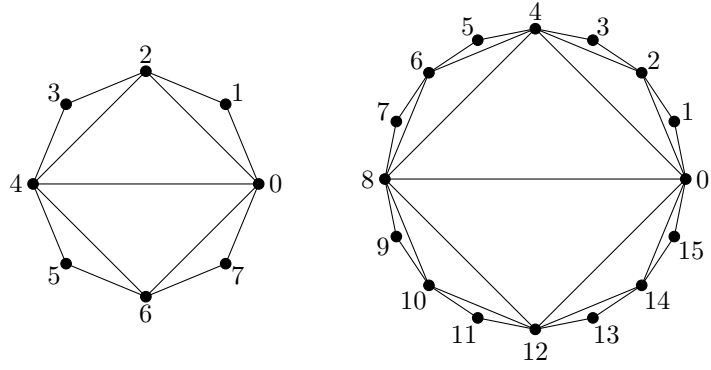

 \centering
 \begin{minipage}{\textwidth}
\centering
 \raisebox{-0.5\height}{\triangulationpoweroftwo{8}{2}{1.5}}
 \qquad
  \raisebox{-0.5\height}{\triangulationpoweroftwo{16}{3}{2}}
 \end{minipage}
 \caption{Triangulation of the 8-cycle with frequencies $\cT = \{-2,-1,0,1,2\}$ and of the 16-cycle with frequencies $\cT=\{-4,-2,-1,0,1,2,4\}$.}
 \label{fig:triangulation_cycle_8_16}
\end{figure}

If we combine the previous theorem with Theorem \ref{thm:mainsos}, we get that any nonnegative degree-1 function on $\ZZ_N$ has a sum-of-squares certificate supported on $\cT$ where $|\cT| \leq 3\log N$. Note that this corresponds to Theorem \ref{thm:intro-ZN-degd} from the introduction for the case $d=1$. Dually, this allows us to obtain a Hermitian positive semidefinite lift of the regular $N$-gon of size $|\cT| \leq 3 \log N$.

In a previous paper \cite{polygonsequivariant} we showed that the $N=2^n$-gon admits a positive semidefinite lift of size $2n-1$. In fact we showed that any linear function on $\ZZ_N$ that is nonnegative can be written as a sum-of-squares of functions supported on $\{0\} \cup \{\pm 2^i, i=0,\dots,n-2\}$. Note that this is the same set of frequencies that we get if we plug $N = 2^n$ in \eqref{eq:freqsos}. Thus Theorem \ref{thm:maincycle} generalizes the result of \cite{polygonsequivariant} for arbitrary $N$.

\subsection{Degree $d$ functions: powers of cycle graph}

In this section we are interested in functions of degree $d$ on $\ZZ_N$ where $d$ divides $N$. We show how to construct a chordal cover of the associated Cayley graph $\Cay(\hat{\ZZ_N},\cS)$ using the chordal cover of the cycle graph constructed in the previous section. This allows us to show that any nonnegative function on $\ZZ_N$ of degree $d$ has a sum-of-squares certificate supported on at most $3 d\log(N/d)$ frequencies.

\subsubsection{Triangulating the Cayley graph}

Observe that the Cayley graph $\Cay(\hat{\ZZ_N},\cS)$ when $\cS = \{-d,\dots,d\}$ is the $d$'th \emph{power} of the cycle graph $C_N$. Recall the definition of power of a graph:
\begin{defn}
Let $\Gr=(V,E)$ be a graph. Given $d \in \NN$, the $d$'th power of $\Gr$ is the graph $\Gr^d = (V,E^d)$ where two vertices $u,v \in V$ are connected if there is a path of length $\leq d$ connecting $u$ and $v$ in $\Gr$.
\end{defn}
Following this observation, we will use the symbol $C_N^d$ to denote the Cayley graph $\Cay(\hat{\ZZ_N},\{-d,\dots,d\})$. Figure \ref{fig:example_graph_power}(left) shows the graph $C_N^d$ for $N=8$ and $d=2$.
\begin{figure}[ht]
  \centering
  \includegraphics[scale=1]{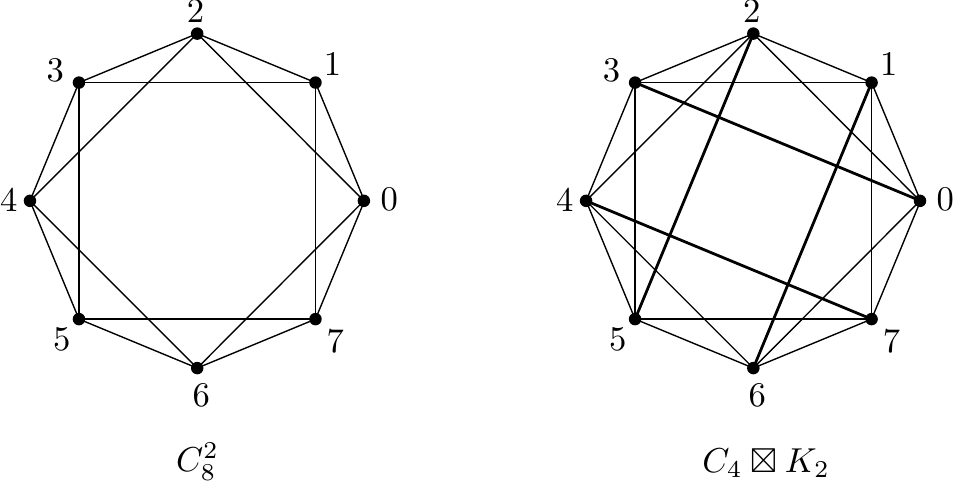}
  \caption{Left: The second power of the cycle graph on 8 nodes: two nodes are connected by an edge if their distance in the cycle graph is at most 2. Right: The graph $C_4 \boxtimes K_2$. Note that $C_8^2 \subset C_4 \boxtimes K_2$. The edges in $C_4 \boxtimes K_2$ that are not in $C_8^2$ are indicated with a heavier line.}
  \label{fig:example_graph_power}
\end{figure}

To construct a triangulation of $C_N^d$ we will actually use the triangulation of the cycle graph $C_N$ constructed in the previous section. For this, we need the following definition of \emph{strong product} of graphs:

\begin{defn}
Given graphs $\Gr=(V,E)$ and $\Gr'=(V',E')$ define the \emph{strong product} of $\Gr$ and $\Gr'$, denoted $\Gr \boxtimes \Gr'$ to be the graph with vertex set $V\times V'$ and where two vertices $(u,u') \in V\times V'$ and $(v,v') \in V\times V'$ are connected if:
\[ (u=v \text{ and } \{u',v'\} \in E') \;\; \text{ or } \;\; (\{u,v\} \in E \text{ and } u'=v') \;\; \text{ or } \;\; (\{u,v\} \in E \text{ and } \{u',v'\} \in E'). \]
\end{defn}
\begin{rem}
An important special case is when one of the graphs, say $\Gr'$, is a complete graph $\Gr' = K_m$. In this case two distinct vertices $(u,u')$ and $(v,v')$ in $\Gr\boxtimes K_m$ are connected if either $u=v$ or $\{u,v\} \in E(\Gr)$.
\end{rem}
Given two graphs $\Gr=(V,E)$ and $\Gr'=(V,E')$ with the same vertex set $V$ we say that $\Gr'$ covers $\Gr$ and we write $\Gr \subseteq \Gr'$ if $E \subseteq E'$.
Our main observation to construct a chordal cover of $C_N^d$ is the following:
\begin{prop}
Let $N$ and $d$ be two integers and assume that $d$ divides $N$. Let $C_N^d$ be the $d$'th power of the cycle graph $C_N$ and let $C_{N/d}$ be the cycle graph on $N/d$ nodes. Then 
\begin{equation}
 \label{eq:Cndproduct}
 C_{N}^d \subseteq C_{N/d} \boxtimes K_d.
\end{equation}
\end{prop}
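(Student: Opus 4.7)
My plan is to exhibit the containment through the natural bijection between $\ZZ_N$ and $\ZZ_{N/d}\times \ZZ_d$ coming from Euclidean division by $d$. Concretely, I define $\phi:\ZZ_N \to \ZZ_{N/d}\times \ZZ_d$ by writing each $x\in\{0,1,\dots,N-1\}$ uniquely as $x = qd+r$ with $0\le r<d$ and $0\le q<N/d$, and setting $\phi(x)=(q,r)$. Under this identification, the vertex sets of $C_N^d$ and $C_{N/d}\boxtimes K_d$ coincide, so it suffices to show that every edge of $C_N^d$ is an edge of $C_{N/d}\boxtimes K_d$.

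Take an edge $\{u,v\}$ of $C_N^d$, so $u - v \equiv k \pmod N$ for some $k\in\{\pm 1,\dots,\pm d\}$, and write $u = q_u d + r_u$, $v = q_v d + r_v$. By symmetry we may assume $k>0$, and we split into two subcases depending on the integer lift: either $u - v = k$ (the ``in-range'' case) or $u - v = k - N$ (the ``wraparound'' case). In the first case, $(q_u-q_v)d = k - (r_u-r_v) \in (1-d,\, 2d)$ is a multiple of $d$, forcing $q_u-q_v\in\{0,1\}$. If $q_u=q_v$ then $r_u\ne r_v$ (as $u\ne v$), so $\phi(u)$ and $\phi(v)$ are adjacent in $C_{N/d}\boxtimes K_d$ via the ``equal first coordinate'' clause; if $q_u-q_v=1$, the first coordinates are adjacent in $C_{N/d}$, which is enough for adjacency in the strong product regardless of the second coordinates. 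In the wraparound case, $(q_v-q_u)d = (N-k) - (r_v-r_u) \in (N-2d,\, N+d-1)$ and is a multiple of $d$ lying in $[0,\, N-d]$, forcing $q_v - q_u = N/d - 1$, so again $q_u$ and $q_v$ are adjacent in $C_{N/d}$.

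The argument is essentially a careful bookkeeping exercise, and I do not expect a significant conceptual obstacle. The only subtlety is keeping track of the wraparound: the map $\phi$ is a bijection of sets but not a group homomorphism, so one cannot directly translate the group-theoretic adjacency condition of $C_N^d$ into one about the coordinates of $\phi$. Handling the two lifts of $u - v$ to $\ZZ$ separately, and using the divisibility of $N$ by $d$ to pin down $(q_u - q_v) d$ to a single multiple of $d$ in each regime, is what makes the case analysis go through. Small corner cases (for instance $d=1$, where $r_u = r_v = 0$ and the argument degenerates to the trivial statement $C_N \subseteq C_N \boxtimes K_1 = C_N$, or $N/d = 2$, where adjacency in $C_{N/d}$ must be interpreted modulo $N/d$) all fit into the same framework without modification.
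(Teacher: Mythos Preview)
Your proof is correct and follows essentially the same approach as the paper: both use the Euclidean-division bijection $x\leftrightarrow(q,r)$ with $x=qd+r$ to identify vertices, then argue that adjacency in $C_N^d$ forces the $q$-coordinates to differ by at most $1$ (modulo $N/d$), which is exactly adjacency in $C_{N/d}\boxtimes K_d$. The paper's argument is terser (``it is easy to see that adjacent vertices lie in the same or adjacent groups $V_q$''), while you carry out the explicit case split on wraparound; your bookkeeping is sound, with the only cosmetic imprecision being the stated range $[0,N-d]$ for $(q_v-q_u)d$ (it should be $[-(N-d),N-d]$, but this does not affect the conclusion).
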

\begin{proof}
To show the inclusion \eqref{eq:Cndproduct} we first need to identify the vertices of $C_N^d$ with those of $C_{N/d} \boxtimes K_d$. Note that the vertex set of $C_N^d$ can be identified with $\ZZ_N$ and the vertex set of $C_{N/d}$ can be identified with $\ZZ_{N/d}$. We also identify the vertices of $K_d$ with $\{0,\dots,d-1\}$.
By definition of $\boxtimes$, the vertices of $C_{N/d} \boxtimes K_d$ are $\ZZ_{N/d} \times \{0,\dots,d-1\}$. Consider the map:
\begin{equation}
\label{eq:graphidmap}
 \phi: \ZZ_{N/d} \times \{0,\dots,d-1\} \rightarrow \ZZ_N, \quad \phi(q,r) = qd + r.
\end{equation}
This map is well-defined and gives a bijection between $\ZZ_{N/d} \times \{0,\dots,d-1\}$ and $\ZZ_N$.  The map $\phi$ thus identifies vertices of $C_N^d$ with those of $C_{N/d} \boxtimes K_d$.

We now show that, with this identification, inclusion \eqref{eq:Cndproduct} holds. We need to show that if $i,i' \in \ZZ_N$ are connected in $C_N^d$ (i.e., $i-i' \in \{-d,\dots,d\}$) then necessarily $(q,r)$ and $(q',r')$ are connected in $C_{N/d} \boxtimes K_d$ (i.e., $q-q' \in \{-1,0,1\}$), where $(q,r)$ and $(q',r')$ are such that $i=\phi(q,r)$ and $i'=\phi(q',r')$. Consider for $q \in \ZZ_{N/d}$ the set of vertices of $C_N^d$ given by $V_q = \{\phi(q,r) : r =0,\dots,d-1\} \subset \ZZ_N$. Note that $(V_q)_{q \in \ZZ_{N/d}}$ forms a partition of the vertex set of $C_N^d$ and that $|V_q| = d$ for all $q$ (for example if $N=8$ and $d=2$ (Figure \ref{fig:example_graph_power}) $V_0 = \{0,1\}, V_1 = \{2,3\}, V_2 = \{4,5\}, V_4 = \{6,7\}$). It is easy to see that if $i$ and $i'$ are two adjacent vertices of $C_N^d$, then $i$ and $i'$ must be in the same group (i.e., $i,i' \in V_q$) or in adjacent group (i.e., $i \in V_q$ and $i' \in V_{q+1}$ or vice-versa). In other words this means that $q-q' \in \{-1,0,1\}$ which means that $(q,r)$ and $(q',r')$ are connected in $C_{N/d}\boxtimes K_d$.
\end{proof}


The previous proposition gives a natural way to construct a chordal cover of $C_{N}^d$ from that of $C_{N/d}$. Indeed if $\Gamma$ is a chordal cover of $C_{N/d}$ then one can show that $\Gamma \boxtimes K_d$ is a chordal cover of $C_{N}^d$ and one can also characterize the maximal cliques of $\Gamma \boxtimes K_d$ in terms of those of $\Gamma$. This is the object of the next proposition.

\begin{prop}
\label{prop:chordalproducts}
Let $\Gr = (V,E)$ be a graph and $d$ be any integer.
\begin{enumerate}
\item If $\Gr'$ is such that $\Gr \subseteq \Gr'$ then $\Gr\boxtimes K_d \subseteq \Gr' \boxtimes K_d$.
\item If $\Gr$ is chordal then $\Gr \boxtimes K_d$ is chordal.
\item All the maximal cliques of $\Gr \boxtimes K_d$ have the form $\cC \times K_d$ where $\cC$ is a maximal clique of $\Gr$.
\end{enumerate}
\end{prop}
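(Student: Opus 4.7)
Each of the three parts of the proposition should follow essentially from unpacking the definition of the strong product, together with the observation that $\Gr\boxtimes K_d$ is the graph obtained from $\Gr$ by ``blowing up'' each vertex $v$ into a clique on $\{v\}\times\{0,\dots,d-1\}$ and joining two such blocks completely whenever the underlying vertices are adjacent in $\Gr$.

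For part (1), the plan is to unpack the definition directly. If $(u,u')\neq (v,v')$ are adjacent in $\Gr\boxtimes K_d$, then either $u=v$ (in which case the $K_d$-edge between $u'$ and $v'$ still witnesses adjacency in $\Gr'\boxtimes K_d$), or $\{u,v\}\in E\subseteq E'$ (in which case the edge persists in $\Gr'\boxtimes K_d$). Nothing more is needed.

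For part (2), my plan is to exhibit an explicit perfect elimination ordering (PEO) of $\Gr\boxtimes K_d$ built from a PEO $v_1,\dots,v_n$ of $\Gr$, by ordering the vertices lexicographically as $(v_1,0),\dots,(v_1,d-1),(v_2,0),\dots,(v_n,d-1)$. The later neighbors of $(v_i,r)$ in this ordering are the remaining vertices of the block $\{v_i\}\times\{0,\dots,d-1\}$ together with all vertices in the blocks $\{v_j\}\times\{0,\dots,d-1\}$ for those $j>i$ with $\{v_i,v_j\}\in E$. By the blow-up description, these later neighbors form a clique in $\Gr\boxtimes K_d$ because (a) any two vertices in the same block are adjacent by $K_d$, (b) two vertices in blocks over $v_j\neq v_k$ are adjacent precisely when $\{v_j,v_k\}\in E$, and (c) the later neighbors of $v_i$ in the PEO of $\Gr$ form a clique in $\Gr$. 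The only thing to watch in the writeup is the clean case analysis (same block vs.\ different blocks).

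For part (3), I would establish two facts from the definition of $\boxtimes$. First, if $\cC$ is any clique of $\Gr$, then $\cC\times\{0,\dots,d-1\}$ is a clique of $\Gr\boxtimes K_d$. Second, if $\cD$ is any clique of $\Gr\boxtimes K_d$ and $\pi_1(\cD)$ denotes the set of first coordinates appearing in $\cD$, then distinct $u,v\in \pi_1(\cD)$ must satisfy $\{u,v\}\in E$ (since vertices of $\cD$ with distinct first coordinates must be connected through the $\Gr$-component of $\boxtimes$), so $\pi_1(\cD)$ is a clique of $\Gr$, and moreover $\cD\subseteq \pi_1(\cD)\times\{0,\dots,d-1\}$. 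Now if $\cD$ is maximal, the second fact gives $\cD\subseteq \pi_1(\cD)\times\{0,\dots,d-1\}$, the first fact shows the right-hand side is already a clique of $\Gr\boxtimes K_d$, and maximality of $\cD$ forces equality. Finally, $\pi_1(\cD)$ must be maximal in $\Gr$, since any strict enlargement $\pi_1(\cD)\subsetneq \cC'$ would give a strict enlargement $\cD=\pi_1(\cD)\times\{0,\dots,d-1\}\subsetneq \cC'\times\{0,\dots,d-1\}$, contradicting maximality of $\cD$. I do not expect any of these steps to be a genuine obstacle; the bulk of the care is in the case analysis of part (2).
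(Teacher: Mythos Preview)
Your proposal is correct. Parts (1) and (3) match the paper's approach essentially verbatim: the paper also says (1) ``is clear from the definition of $\boxtimes$'' and for (3) observes that the projection of a clique in $\Gr\boxtimes K_d$ onto first coordinates is a clique in $\Gr$, which is exactly your $\pi_1(\cD)$ argument (you just fill in more of the routine maximality verification).

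The genuine difference is in part (2). The paper argues directly with the chord definition: given a cycle $(u_1,v_1),\dots,(u_l,v_l)$ of length $\geq 4$ in $\Gr\boxtimes K_d$, if some consecutive pair has $u_i=u_{i+1}$ then $(u_i,v_i)$ and $(u_{i+2},v_{i+2})$ are adjacent and give a chord; otherwise the first coordinates $u_1,\dots,u_l$ form a cycle of length $\geq 4$ in $\Gr$, and a chord there lifts to a chord in the product. You instead invoke the perfect elimination ordering characterization of chordality and build a PEO on $\Gr\boxtimes K_d$ lexicographically from a PEO on $\Gr$. Both arguments are short; the paper's is more self-contained (it does not appeal to the PEO characterization), while yours is arguably cleaner in that it sidesteps any need to reason about whether the projected sequence $u_1,\dots,u_l$ is a \emph{simple} cycle in $\Gr$.
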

\begin{proof}
\begin{enumerate}
\item The first point is clear from the definition of $\boxtimes$.
\item Let $(u_1,v_1)\dots (u_l,v_l)$ be a cycle in $\Gr\boxtimes K_d$ of length $l \geq 4$ where $(u_l,v_l) = (u_1,v_1)$. If there exists $i \in \{1,\dots,l-1\}$ such that $u_i = u_{i+1}$ then the edge $\{(u_i,v_i),(u_{i+2},v_{i+2})\}$ is a chord of the cycle. Otherwise note that $u_1\dots u_l$ is a cycle in $\Gr$ of length $\geq 4$. Since $\Gr$ is chordal there is $1 \leq i,j \leq l-1$ with $j-i \geq 2$ such that $\{u_i,u_j\} \in E$. In this case the edge $\{(u_i,v_i),(u_j,v_j)\}$ is a chord of the cycle.
\item The third property easily follows from the fact that if $\cC=\{(u_i,v_i), i=1,\dots,k\}$ is a clique in $\Gr \boxtimes K_d$ then $\{u_i, i=1,\dots,k\} \subseteq V$ is a clique in $\Gr$.
\end{enumerate}
\end{proof}

We can now use the triangulation of the cycle graph constructed in the previous section to obtain a triangulation of $C_N^d$.

\begin{prop}
\label{prop:freqCNd}
Let $N$ and $d$ be two integers and assume that $d$ divides $N$.
If $C_{N/d}$ has a triangulation with frequencies $\cT \subseteq \ZZ_{N/d}$, then $C_N^d$ has a triangulation with frequencies
\begin{equation}
 \label{eq:T'}
 \cT' = \{ dk+r : k \in \cT, r \in \{0,\dots,d-1\} \}.
\end{equation}
Note that $|\cT'| \leq d \cdot |\cT|$.
\end{prop}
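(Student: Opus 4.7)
The plan is to combine the three tools developed just above: the cover $C_N^d \subseteq C_{N/d} \boxtimes K_d$, the preservation of chordality under $\boxtimes K_d$, and the translation trick at the heart of Theorem~\ref{thm:mainsos}. Concretely, let $\Gamma$ be a chordal cover of $C_{N/d}$ with Fourier support $\cT \subseteq \ZZ_{N/d}$, meaning that for every maximal clique $\cC$ of $\Gamma$ there exists $k_{\cC}\in\ZZ_{N/d}$ with $k_\cC + \cC \subseteq \cT$. I would then argue that $\Gamma \boxtimes K_d$ is a chordal cover of $C_N^d$ with an explicit choice of translations whose union of images is $\cT'$.

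First, I would verify that $\Gamma \boxtimes K_d$ is indeed a chordal cover of $C_N^d$. By Proposition~\ref{prop:chordalproducts}(1), since $C_{N/d} \subseteq \Gamma$ we have $C_{N/d} \boxtimes K_d \subseteq \Gamma \boxtimes K_d$; combined with the inclusion $C_N^d \subseteq C_{N/d} \boxtimes K_d$ from Equation~\eqref{eq:Cndproduct}, this gives $C_N^d \subseteq \Gamma \boxtimes K_d$. Proposition~\ref{prop:chordalproducts}(2) ensures that $\Gamma\boxtimes K_d$ is chordal. By Proposition~\ref{prop:chordalproducts}(3), the maximal cliques of $\Gamma \boxtimes K_d$ are exactly of the form $\cC \times K_d$ for $\cC$ a maximal clique of $\Gamma$, which under the identification $\phi(q,r) = qd + r$ from Equation~\eqref{eq:graphidmap} correspond to subsets of $\ZZ_N$ of the form
\[ \widetilde{\cC} := \{qd + r : q \in \cC,\; r \in \{0,1,\dots,d-1\}\}. \]

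Next, for each maximal clique $\cC$ of $\Gamma$ I would take the translation element in $\hat{\ZZ_N} \cong \ZZ_N$ to be $d\cdot k_{\cC}$ (with $k_\cC$ lifted to $\{0,1,\dots,N/d-1\}$). Then
\[ d\cdot k_{\cC} + \widetilde{\cC} = \{(k_{\cC} + q)d + r : q \in \cC,\; r\in\{0,\dots,d-1\}\} \subseteq \{k d + r : k \in \cT,\; r\in\{0,\dots,d-1\}\} = \cT', \]
using that $k_\cC + \cC \subseteq \cT$. This shows that $\Gamma\boxtimes K_d$ has Fourier support $\cT'$ in the sense of Definition~\ref{def:fouriersupportgraph}, which is exactly the triangulation claim. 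The cardinality bound $|\cT'| \leq d\cdot|\cT|$ is immediate from the definition of $\cT'$ as a union of $|\cT|$ sets of size $d$.

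There is no real obstacle beyond bookkeeping: the only thing to watch is that the bijection $\phi$ matches the group operation on $\ZZ_N$ in the ``correct'' way, so that translating by $d\cdot k_\cC \in \ZZ_N$ shifts the $\cC$-coordinate by $k_\cC$ while leaving the $K_d$-coordinate intact. Once this is checked, the choice of translations is forced by the shape of $\cT'$ and the verification is a one-line calculation.
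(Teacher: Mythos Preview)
Your proposal is correct and matches the paper's proof essentially line for line: both take $\Gamma\boxtimes K_d$ as the chordal cover of $C_N^d$, invoke Proposition~\ref{prop:chordalproducts} to identify the maximal cliques as $\{dq+r:q\in\cC,\,r\in\{0,\dots,d-1\}\}$, and translate each by $d\cdot k_{\cC}$ to land inside $\cT'$. The only difference is that you spell out the chain of inclusions $C_N^d\subseteq C_{N/d}\boxtimes K_d\subseteq\Gamma\boxtimes K_d$ a bit more explicitly.
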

\begin{proof}
Let $\Gamma$ be a triangulation $C_{N/d}$ with frequencies $\cT$. By definition, this means that for any maximal clique $\cC$ of $C_{N/d}$, there is $k_{\cC} \in \ZZ_{N/d}$ such that $k_{\cC} + \cC \subseteq \cT$.

By Proposition \ref{prop:chordalproducts}, we know that $\Gamma \boxtimes K_d$ is a chordal cover of $C_N^d$. Let $\cC'$ be a maximal clique of $\Gamma\boxtimes K_d$. By Proposition \ref{prop:chordalproducts}, we know that there exists $\cC$ maximal clique of $\Gamma$ such that $\cC' = \cC \times K_d = \{dq+r : q \in \cC, r \in \{0,\dots,d-1\}\}$. Define $k_{\cC'} = d k_{\cC} \in \ZZ_N$ and note that:
\[ \begin{aligned}
k_{\cC'}+\cC' &= \{d k_{\cC} + dq + r : q \in \cC, r \in \{0,\dots,d-1\}\}\\
&= \{(k_{\cC}+q)d+r: q \in \cC, r \in  \{0,\dots,d-1\}\} \subseteq \cT',
\end{aligned} \]
where the last inclusion follows from the fact that $k_{\cC}+q \in \cT$ whenever $q \in \cC$. We have thus shown that for any maximal clique $\cC'$ of $\Gamma\boxtimes K_d$, there is $k_{\cC'} \in \ZZ_{N}$ such that $k_{\cC'} + \cC' \subseteq \cT'$. Thus this shows that $\Gamma \boxtimes K_d$ is a chordal cover of $C_{N}^d$ with frequencies $\cT'$.
\end{proof}

Combining Proposition \ref{prop:freqCNd} and the triangulation of the cycle graph from Theorem \ref{thm:maincycle} we get the following corollary:
\begin{cor}
\label{cor:triangulationCNd}
Let $N$ and $d$ be two integers and assume that $d$ divides $N$. Then the graph $C_N^d$ has a triangulation with frequencies $\cT \subset \hat{\ZZ_N}$ where $|\cT| \leq 3 d \log (N/d)$.
\end{cor}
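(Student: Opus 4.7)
The plan is to combine the two main results from this section in a direct fashion, with no additional ingredients required. Since the corollary is an immediate consequence of Theorem \ref{thm:maincycle} and Proposition \ref{prop:freqCNd}, the proof will essentially be a single application of each.

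First I would invoke Theorem \ref{thm:maincycle} applied to the integer $N/d$ in place of $N$. This provides a chordal cover of the cycle graph $C_{N/d}$ whose set of frequencies $\cT_0 \subseteq \hat{\ZZ_{N/d}}$ satisfies the size bound $|\cT_0| \leq 3\log_2(N/d)$ (with the explicit description \eqref{eq:freqsos} available if desired, though the bound is all we need here). Note that we need $N/d > 2$ for Theorem \ref{thm:maincycle} to apply directly; the edge cases $N/d \in \{1,2\}$ are trivial since the associated graph is already chordal (indeed a single vertex or a single edge), and we can simply take $\cT_0$ to be the whole vertex set, which still satisfies the bound.

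Next I would apply Proposition \ref{prop:freqCNd}, which takes as input a chordal cover of $C_{N/d}$ with frequencies $\cT_0$ and outputs a chordal cover of $C_N^d$ with frequencies
\[
 \cT' = \{\, dk + r : k \in \cT_0,\; r \in \{0,\dots,d-1\}\,\} \subseteq \ZZ_N.
\]
The proposition also records the cardinality bound $|\cT'| \leq d\cdot |\cT_0|$, which chains together with the previous step to give
\[
 |\cT'| \leq d\cdot |\cT_0| \leq d\cdot 3\log_2(N/d) = 3d\log(N/d),
\]
exactly the bound claimed in the corollary.

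There is no real obstacle at this stage; all the work has already been done in Theorem \ref{thm:maincycle} (the inductive construction of the chordal cover of the cycle graph, deferred to Appendix \ref{app:cylic}) and in Proposition \ref{prop:freqCNd} (together with its supporting structural result Proposition \ref{prop:chordalproducts} on chordality being preserved under the strong product with a complete graph). The only mild subtlety worth flagging is to verify that the frequency set inherited from the product construction does indeed lie inside $\ZZ_N$ and that the translations $k_{\cC'} = d\, k_{\cC}$ used in the proof of Proposition \ref{prop:freqCNd} are consistent under the identification of $\ZZ_{N/d}\times \{0,\dots,d-1\}$ with $\ZZ_N$ via $\phi(q,r) = qd+r$, but this is exactly what was checked in that proposition.
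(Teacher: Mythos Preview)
Your proposal is correct and matches the paper's approach exactly: the corollary is stated there as an immediate consequence of combining Theorem~\ref{thm:maincycle} (applied to $C_{N/d}$) with Proposition~\ref{prop:freqCNd}, precisely as you do. One tiny quibble: in the degenerate case $N/d=1$ the bound $3\log_2(1)=0$ cannot be met by any nonempty $\cT_0$, so that case is really excluded by the corollary's statement rather than handled by your argument; the case $N/d=2$ is fine as you say.
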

Using Theorem \ref{thm:intro-main-sos}, this proves Theorem \ref{thm:intro-ZN-degd} from the introduction concerning nonnegative functions on $\ZZ_N$ of degree $d$.
\begin{repthm}{thm:intro-ZN-degd}
Let $N$ and $d$ be two integers and assume that $d$ divides $N$. Then there exists $\cT \subseteq \ZZ_N$ with $|\cT| \leq 3 d \log (N/d)$ such that any nonnegative function on $\ZZ_N$ of degree at most $d$ has a sum-of-squares certificate supported on $\cT$.
\end{repthm}

Figure \ref{fig:triangulation_cycle_second_power} shows the triangulation of $C_{16}^2$ obtained by triangulating $C_8$ using Theorem \ref{thm:maincycle} and applying the strong graph product with $K_2$.
\begin{figure}[ht]
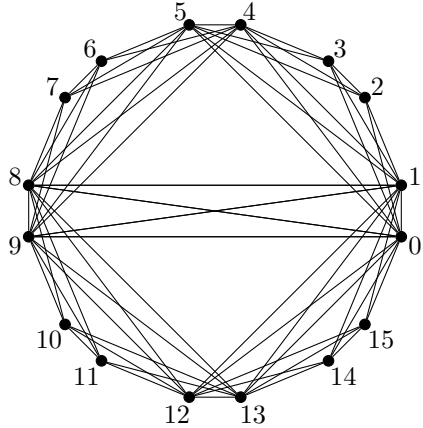

  \centering
  \triangulationpoweroftwoCyclePowerTwo{16}{2}{2.5}
  \caption{Triangulation of the graph $C_{16}^2$ obtained as the strong graph product of $\Gamma$ and $K_2$, where $\Gamma$ is the triangulation of $C_8$ obtained from Theorem \ref{thm:maincycle} and illustrated in Figure \ref{fig:triangulation_cycle_8_16}(left).}
  \label{fig:triangulation_cycle_second_power}
\end{figure}

\subsubsection{Cyclic polytopes}

Observe that the real moment polytope for $G=\ZZ_N$ and $\cS =  \{-d,\dots,d\}$ is given by:
\[
\cM^{\RR}(\ZZ_N,\{-d,\dots,d\}) = \conv \Bigl\{ (\cos(2\pi x/N),\sin(2\pi x/N),\dots,\cos(2\pi dx/N),\sin(2\pi dx/N)), x \in \ZZ_N \Bigr\} \subset \RR^{2d}.
\]
This is just the regular trigonometric cyclic polytope which we abbreviate by $TC(N,2d)$:
\begin{equation}
\label{eq:TC}
 TC(N,2d) = \conv \Bigl\{ M(2\pi x/N) : x=0,1,\dots,N-1 \Bigr\} \subset \RR^{2d},
\end{equation}
where $M(\theta)$ is the degree $d$ trigonometric moment curve:
\[ M(\theta) = \Bigl( \cos (\theta), \sin (\theta), \cos(2\theta), \sin(2\theta), \dots, \cos(d\theta), \sin(d\theta) \Bigr).
\]
Cyclic polytopes play an important role in polyhedral combinatorics \cite{ziegler1995lectures} and satisfy many interesting properties. For example the celebrated \emph{Upper Bound Theorem}, states that for any $2d$ dimensional polytope $P$ with $N$ vertices, $f_i(P) \leq f_i(TC(N,2d))$ for any $i=0,\dots,2d$, where $f_i(P)$ is the number of $i$-dimensional faces of a polytope $P$ \cite{ziegler1995lectures}. Another important property of cyclic polytopes is that they are \emph{neighborly} \cite{gale1963neighborly} (recall that a $2d$-dimensional polytope $P$ is called neighborly if any collection of $d$ vertices of $P$ span a face of $P$). 

The results from this section allow us to obtain a positive semidefinite lift of $TC(N,2d)$ of size $O(d \log (N/d))$ when $d$ divides $N$. More precisely, if we combine Corollary \ref{cor:triangulationCNd} and Theorem \ref{thm:moment-main} we get that $TC(N,2d)$ has a Hermitian positive semidefinite lift of size at most $3d \log (N/d)$, proving Theorem
\ref{thm:intro-TClift} from the introduction:
\begin{repthm}{thm:intro-TClift}
Let $N$ and $d$ be two integers and assume that $d$ divides $N$. Then the trigonometric cyclic polytope $TC(N,2d)$ has a Hermitian positive semidefinite lift of size at most $3d\log (N/d)$.
\end{repthm}

\paragraph{Real positive semidefinite lifts} Using the results of Section \ref{sec:real} one can convert the Hermitian positive semidefinite lift of $TC(N,2d)$ into a real positive semidefinite lift of size at most $4d \log (N/d)$. Indeed, first note that in the case $d=1$, if $\cT$ is the set of frequencies \eqref{eq:freqsos} for the cycle graph $C_{N}$ then $\cT \cup (-\cT)$ has cardinality at most $4 \log (N)$ and the set $\cT \cup (-\cT)$ clearly has an equalizing involution since it is symmetric. Thus this shows that in the case $d=1$, the moment polytope $\cM^{\RR}(\ZZ_N,\{-1,1\})$ has a \emph{real} positive semidefinite lift of size at most $4\log(N)$. For the case where $d > 1$ and $d$ divides $N$ it is not difficult to show that if $\cT$ has an equalizing involution $\sigma$ then $\cT'$ defined in \eqref{eq:T'} also has an equalizing involution. Indeed given $k \in \cT$ and $r \in \{0,\dots,d-1\}$, define $\sigma':\cT'\rightarrow \cT'$ by $\sigma'(dk+r) = d \sigma(k) + d-r-1$ (such a map is well-defined). Then $\sigma'(dk+r) + dk+r = d(\sigma(k) + k) + d$ which is a constant independent of $k$ and $r$, and thus $\sigma'$ is an equalizing involution for $\cT'$. Thus using the symmetric set of frequencies for the cycle graph $C_{N/d}$ (of size $4\log (N/d)$) we get that $\cT'$ has size at most $4d \log (N/d)$ and has an equalizing involution. Thus this shows that $\cM^{\RR}(\ZZ_N,\{-d,\dots,d\})$ has a \emph{real} positive semidefinite lift of size at most $4d \log(N/d)$.

\paragraph{Comparison with LP lifts} One can show that in the regime $N = \Theta(d^2)$ our positive semidefinite lift for $TC(N,2d)$ is provably smaller than any linear programming lift of $TC(N,2d)$. Indeed, the following lower bound on the LP extension complexity of $k$-neighborly polytopes was proved in \cite{fiorini2013combinatorial}:
\begin{prop}(\cite[Proposition 5.16]{fiorini2013combinatorial})
If $P$ be a $k$-neighborly polytope with $N$ vertices then $\xcLP(P) \geq \min(N,(k+1)(k+2)/2)$.
\end{prop}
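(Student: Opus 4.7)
The plan is to apply Yannakakis' theorem, which identifies $\xcLP(P)$ with the nonnegative rank $\nnrank(S_P)$ of the slack matrix $S_P$ of $P$ (rows indexed by facets, columns by vertices, with $(S_P)_{F,v}$ the slack of $v$ in the inequality for $F$, so $(S_P)_{F,v}=0$ iff $v\in F$). It thus suffices to show $\nnrank(S_P)\geq \min(N,\binom{k+2}{2})$. I would then pass to the \emph{rectangle-covering} lower bound: $\nnrank(S_P)$ is at least the minimum number of combinatorial rectangles $I\times J$, each satisfying $v\notin F$ for every $(F,v)\in I\times J$, needed to cover the support $\{(F,v):v\notin F\}$. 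A convenient way to force this cover number to be large is via a \emph{fooling set}, i.e., a family of pairs $\{(F_\alpha,v_\alpha)\}$ with $v_\alpha\notin F_\alpha$ but such that for every $\alpha\neq\beta$, at least one of $v_\alpha\in F_\beta$ or $v_\beta\in F_\alpha$ holds; any fooling set of size $r$ yields $\nnrank(S_P)\geq r$.

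The heart of the argument is to produce such an obstruction of size $\binom{k+2}{2}$ from the combinatorics of $k$-neighborliness, assuming $N\geq k+2$. I would fix $k+2$ vertices $T=\{v_0,\ldots,v_{k+1}\}$; for each $k$-subset $S\subseteq T$, the set $S$ is a face of $P$ by $k$-neighborliness and so is contained in some facet $F_S$, which (for a generic choice, or after a perturbation that does not decrease $\xcLP(P)$) meets $T$ exactly in $S$. This produces $\binom{k+2}{k}=\binom{k+2}{2}$ facets, each naturally labelled by the two-element ``missed'' set $T\setminus S$. The remaining task is to pair each $F_S$ with one of its two missed vertices so that the resulting $\binom{k+2}{2}$ pairs form a fooling set, which amounts to orienting the edges of $K_{k+2}$ so that the pairwise fooling condition is met on every pair of chosen rows.

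The main obstacle I anticipate is this final pairing step: naive choices of missed vertex lead to collisions in which the fooling condition fails, so one must orient $K_{k+2}$ in a combinatorially careful way, or else replace the fooling-set certificate with a direct rectangle-covering argument on the $\binom{k+2}{2}\times(k+2)$ submatrix whose support has the structure of the edge–vertex incidence matrix of $K_{k+2}$; the latter matrix is known to have rectangle-covering number $\binom{k+2}{2}$ by a disjointness-style argument. The boundary case $N<\binom{k+2}{2}$ is handled by running the same construction on all $N$ vertices: one obtains $\nnrank(S_P)\geq N$ because in this regime $k$-neighborliness together with the vertex-count constraint forces $P$ to be close enough to simplicial that each vertex contributes an independent direction to $S_P$, and the simplex itself already satisfies $\xcLP=N$.
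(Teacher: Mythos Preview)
First, note that the paper does not prove this proposition at all: it is simply quoted from \cite{fiorini2013combinatorial}. So there is no in-paper argument to compare against; I can only assess whether your sketch actually works.

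It does not, and the failure is concrete. Your fallback is to bound the rectangle covering number of the $\binom{k+2}{2}\times(k+2)$ submatrix whose support is the edge--vertex incidence pattern of $K_{k+2}$ (rows $=$ edges $e$, columns $=$ vertices $v$, a nonzero in position $(e,v)$ iff $v\in e$). You assert this matrix has rectangle covering number $\binom{k+2}{2}$; in fact it is exactly $k+2$. For the upper bound, the $k+2$ rectangles $\{e:v\in e\}\times\{v\}$, one per vertex, already cover every nonzero. For the lower bound, any $1$-rectangle with two or more columns can contain at most one row (an edge has only two endpoints), so every rectangle covers at most $k+1$ nonzeros; since there are $(k+1)(k+2)$ nonzeros in total, at least $k+2$ rectangles are required. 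The fooling-set version dies for the same reason: if $(e,v)$ and $(e',v)$ share a column, the fooling condition demands $v\notin e$ or $v\notin e'$, contradicting that both entries are nonzero. With only $k+2$ columns, no fooling set in this submatrix can exceed $k+2$. So neither certificate reaches $\binom{k+2}{2}$ from a single $(k+2)$-tuple of vertices.

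There is also a gap earlier. You need, for each $k$-subset $S\subset T$, a facet $F_S$ with $F_S\cap T=S$. Neighborliness gives a facet containing $S$ and missing \emph{one} chosen point of $T\setminus S$ (since the face $\conv(S)$ is the intersection of the facets through it), but not necessarily missing both. Already for $k=1$ and $P$ a square this fails for every choice of three vertices. Your proposed fixes do not help: facets are discrete, so there is no ``generic'' choice, and perturbing $P$ can change $\xcLP(P)$. The actual argument in Fiorini--Kaibel--Pashkovich--Theis proceeds differently; if you want to reconstruct it, you should consult their paper rather than try to push the $(k+2)$-vertex incidence submatrix further.
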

Since $TC(N,2d)$ is $d$-neighborly, if we choose for example $N = d^2$ then the previous proposition asserts that $\xcLP(TC(d^2,2d)) \geq \Omega(d^2)$ whereas in this case our positive semidefinite has size $O(d\log d)$. This allows us to prove the following result giving a gap between SDP extension complexity and LP extension complexity.
\begin{repcor}{cor:LPSDPgap}
There exists a family $(P_d)_{d \in \NN}$ of polytopes where $P_d \subset \RR^{2d}$ such that 
\[ \frac{\xcPSD(P_d)}{\xcLP(P_d)} = O\left( \frac{\log d}{d} \right). \]
\end{repcor}
The only nontrivial LP lift for cyclic polytopes that we are aware of is a recent construction by Bogomolov et al. \cite{bogomolov2014small} for the cylic polytope
\[ C(N,d) = \conv\Bigl\{ (i,i^2,\dots,i^d) : i=1,\dots,N\Bigr\} \]
of size $(\log N)^{\lfloor d/2 \rfloor}$. Note that this lift has smaller size than the ``trivial'' vertex lift of $C(N,d)$ only when $d < O((\log N) / (\log \log N))$. Their construction for $d=2$ is based on the \emph{reflection relations} framework of Kaibel and Pashkovich \cite{kaibel2011constructing} and the case of general $d$ is then obtained via tensor products.

\section{Conclusion}
\label{sec:conclusion}

In this paper we studied nonnegative functions $f$ defined on a finite abelian
group $G$. We looked at functions $f$ that have a \emph{sparse} Fourier support
$\cS$ and we identified a certain combinatorial condition involving chordal
covers of the Cayley graph $\Cay(\hat{G},\cS)$, that guarantees the existence
of a \emph{sparse} sum-of-squares certificate for any nonnegative function
supported on $\cS$.  We applied our general framework to two special cases.
First we looked at quadratic functions defined on the hypercube $G =
\{-1,1\}^n$ and we showed that any nonnegative quadratic function on $G$ has a
sum-of-squares certificate of degree at most $\lceil n/2 \rceil$. This proves a
conjecture by Laurent from 2003 \cite{laurent2003lower} and shows that the
Lasserre hierarchy for the cut polytope converges after $\lceil n/2 \rceil$
steps.  Second, we looked at nonnegative functions defined on $G=\ZZ_N$, the
group of integers modulo $N$. We showed that when $d$ divides $N$, any degree
$d$ nonnegative function on $\ZZ_N$ has a sum-of-squares certificate with
functions supported on some $\cT$ where $|\cT| \leq O(d\log (N/d))$. From the
geometric point of view, this establishes that the \emph{regular trigonometric
cyclic polytope} of degree $d$ in $N$ vertices has a positive semidefinite lift
of size $O(d \log (N/d))$. For the regime $N=\Theta(d^2)$ this gives us a
family of polytopes in increasing dimensions, where the ratio of the PSD
extension complexity to the LP extension complexity is $O((\log d) / d)$.

\paragraph{Equivariance of lifts}
Observe that the moment polytopes $\cM(G,\cS) \subset \CC^{\cS}$ considered in
this paper are invariant under the natural action of $G$ on $\CC^{\cS}$ given
by $\rho(x) = \diag([\chi(x)]_{\chi \in \cS})$. One can show that the Hermitian
PSD lifts for $\cM(G,\cS)$ considered in this paper respect this symmetry of
$\cM(G,\cS)$, in a certain formal sense known as \emph{equivariant lifts}
defined in \cite{fawzi2014equivariant}.

\paragraph{Open problems}
We conclude by briefly discussing two concrete problems about regular trigonometric 
cyclic polytopes arising from this work.  
In previous work we showed that any $\ZZ_N$-equivariant PSD lift of the regular $N$-gon must 
have size at least $\ln(N/2)$~\cite[Theorem 9]{polygonsequivariant}. Since all the Hermitian 
PSD lifts in this paper are equivariant, this shows that it 
is not possible to construct substantially better equivariant PSD lifts of the regular 
$N$-gons than the construction given in Theorem~\ref{thm:intro-TClift} with $d=1$.
It would be interesting to establish corresponding lower bounds for general $d$.
\begin{problem}
	Find lower bounds on the size of ($\ZZ_N$-equivariant) PSD lifts of the regular 
	trigonometric cyclic polytope of degree $d$ in $N$ vertices.
\end{problem}
Problem 9.9 of~\cite{psdranksurvey} asks for a ``family of polytopes that exhibits a
large (e.g.\ exponential) gap between its nonnegative and psd ranks''. 
We have found that the regular trigonometric cyclic polytope of degree $d$ in $N=d^2$ vertices 
gives an explicit family of polytopes for which we can prove a significant (but far from exponential) 
gap between PSD and LP extension complexity. It may be possible to prove a better lower bound 
(parameterized by $N$ and $d$) on the LP extension complexity for this family of polytopes since the bound we use only 
uses the fact that the polytope is neighborly. Such a lower bound may allow us to 
establish a larger gap between the LP and PSD extension complexity of these polytopes.
\begin{problem}
	Find $N(d)$ to make the gap between the LP and PSD extension complexity of regular trigonometric 
	cyclic polytopes of degree $d$ in $N(d)$ vertices as large as possible.
\end{problem}


\appendix
\section{Additional proofs}
\subsection{Proofs from Section~\ref{sec:moment}}
\label{app:moment}

In this section we provide detailed proofs of the main results
in Section~\ref{sec:moment}.
\begin{proof}[Proof of Lemma~\ref{lem:moment-obv}]
    Suppose $\mu$ is a probability measure supported on $G$. 
    Then because $\mu$ is a probability measure, $(\EE_{\mu}[\chi])_{\chi\in \cS}$ certainly satisfies
    $\EE_{\mu}[1_{\hat{G}}] = 1$ and $\EE_{\mu}[|f|^2] \geq 0$ whenever $f\in \cF(G,\CC)$. Conversely, suppose
    $\ell$ is a linear functional on $\cF(G,\CC)$ such that $\ell(1_{\hat{G}})=1$ and $\ell(|f|^2) \geq 0$ for all $f\in \cF(G,\CC)$. 
    We show that $\ell(\cdot)$ coincides with $\EE_{\mu}[\cdot]$ for some probability measure $\mu$ supported on $G$.
    For any $x\in G$ define $\mu(\{x\}) = \ell(\delta_x)$. Since $\delta_x = |\delta_x|^2$ we have that $\mu(\{x\})=\ell(|\delta_x|^2)\geq 0$ for all $x\in G$. 
    Since $\ell$ is linear 
    \[ \sum_{x\in G}\mu(\{x\}) = \ell\left(\sum_{x\in G}\delta_x\right) = \ell(1_{\hat{G}}) = 1.\]
    Hence $\mu$ defines a probability measure supported on $G$ and $\ell(\cdot)$ is exactly the corresponding 
    expectation $\EE_\mu[\cdot]$. The second description of $\cM(G,\hat{G})$ in the lemma follows by rewriting the condition 
    $\ell(|f|^2)$ for all $f\in \cF(G,\CC)$ in coordinates with respect to the character basis as
    \[ \ell(\big|\sum_{\chi\in \hat{G}} \hat{f}(\chi)\chi\big|^2) 
    = \sum_{\chi,\chi'\in \hat{G}}\ell(\bar{\chi}\chi')\bar{\hat{f}(\chi)}\hat{f}(\chi')\quad\text{for all $(\hat{f}(\chi))_{\chi\in \hat{G}}\in \CC^{\hat{G}}$.}\] 
    This is exactly the definition of the matrix $M(\ell) = [\ell_{\bar{\chi}\chi'}]_{\chi,\chi'\in \hat{G}}$ being Hermitian positive semidefinite.
\end{proof}

\begin{proof}[Proof of Proposition~\ref{prop:moment-unstructured}]
    If $(\ell_{\chi})_{\chi\in \cS}\in \cM(G,\cS)$ then from Corollary~\ref{cor:moment-structured} there is $(y_{\chi})_{\chi\in \hat{G}}$
    such that $y_\chi = \ell_\chi$ for all $\chi\in \cS$, $y_{1_{\hat{G}}}=1$, and $M(y) \psd 0$. 
    Hence we can take $Y_{\chi,\chi'} = y_{\bar{\chi}\chi'}$ to show that $(\ell_\chi)_{\chi\in \cS}$ is an element of the right hand side of~\eqref{eq:moment-structured}.

    Conversely, suppose there exists $Y\in \HH_+^{\hat{G}}$ with $Y_{\chi,\chi'} = \ell_{\bar{\chi}\chi'}$ whenever $\bar{\chi}\chi'\in\cS$ 
    and $Y_{\chi,\chi}=1$ for all $\chi\in\hat{G}$. Our task is to construct, from $Y$ some $(y_\chi)_{\chi\in \hat{G}}$ such that $y_{\chi}=\ell_\chi$
    for all $\chi\in\cS$, $y_{1_{\hat{G}}}=1$, and $M(y)\psd 0$. Observe that $\hat{G}$ acts on 
    Hermitian matrices $\HH^{\hat{G}}$ indexed by elements of $\hat{G}$ by simultaneously permuting the rows and columns, i.e.\ by
$[\lambda \cdot Y]_{\chi,\chi'}= Y_{\bar{\lambda}\chi,\bar{\lambda}\chi'}$. 
    We construct a new matrix $Z$ by averaging $Y$ over this group action:
    \[ Z_{\chi,\chi'} := \frac{1}{|\hat{G}|}\sum_{\lambda\in \hat{G}} [\lambda \cdot Y]_{\chi,\chi'} = 
    \frac{1}{|\hat{G}|} \sum_{\lambda\in \hat{G}} Y_{\bar{\lambda}\chi,\bar{\lambda}\chi'}.\]
    Since the action of $\lambda$ is by simultaneously permuting rows and columns each $\lambda \cdot Y$, and hence $Z$ itself, is positive 
    semidefinite with ones on the diagonal. Since we have constructed $Z$ by averaging over a group action, $Z$ is fixed by the action 
    and so satisfies $Z_{\bar{\lambda}\chi,\bar{\lambda}\chi'} = Z_{\chi,\chi'}$
    for all $\chi,\chi'\in \hat{G}$. Consequently there is some $(y_\chi)_{\chi\in \hat{G}}$ such that $Z_{\chi,\chi'} = y_{\bar{\chi}\chi'}$
    for all $\chi,\chi'\in \hat{G}$. It remains to show that if $\bar{\chi}\chi'\in \cS$ then $y_{\bar{\chi}\chi'} := Z_{\chi,\chi'} = Y_{\chi,\chi'} := \ell_{\bar{\chi}\chi'}$.
    This holds because if $\bar{\chi}\chi'\in \cS$ then 
    \[ Z_{\chi,\chi'} = \frac{1}{|\hat{G}|} \sum_{\lambda\in \hat{G}} Y_{\bar{\lambda}\chi,\bar{\lambda}\chi'} = 
    \frac{1}{|\hat{G}|}\sum_{\lambda\in \hat{G}}\ell_{\bar{\chi}\chi'} = \ell_{\bar{\chi}\chi'}.\]
    Hence $y$ has all the desired properties, completing the proof.
\end{proof}

\begin{proof}[Proof of Theorem~\ref{thm:moment-main}]
    First we show that $\cM(G,\cS)$ is a subset of the right-hand-side of~\eqref{eq:moment-main}. To see this observe 
    that the right-hand-side of~\eqref{eq:moment-structured} is certainly contained in the right-hand-size of~\eqref{eq:moment-main}.

    We now establish the reverse inclusion. Suppose $(\ell_\chi)_{\chi\in \cS}$ is such that there exists $(y_\chi)_{\chi\in \cT^{-1}\cT}$
    with $y_\chi = \ell_\chi$ for all $\chi\in \cS$, $y_{1_{\hat{G}}}=1$ and $M_{\cT}(y)\psd 0$. 
    Let $\Gamma$ be a chordal cover of $\Cay(\hat{G},\cS)$ with Fourier support $\cT$. Specifically $\Gamma$ has the property that 
    for every maximal clique $\cC$ of $\Gamma$ there is $\chi_{\cC}\in \hat{G}$ such that $\chi_{\cC}\cC \subseteq \cT$.     
    
    Define the $\Gamma$-partial matrix
    $Y_{\eta,\eta'} = y_{\bar{\eta}\eta'}$ whenever $(\eta,\eta')$ form an edge of $\Gamma$ and $Y_{\chi,\chi}=1$ for all $\chi\in \hat{G}$. 
    This is well defined because
    if $(\eta,\eta')$ is an edge of $\Gamma$, then $\bar{\eta}\eta'\in \cT^{-1}\cT$. 
    To see this observe that any edge of $\Gamma$ is contained in a maximal clique $\cC$, and so there is some 
    $\chi_{\cC}\in \hat{G}$ such that $\chi_{\cC}\eta\in \cT$ and $\chi_{\cC}\eta'\in \cT$. 
    Consequently $\bar{\eta}\eta' = \bar{\chi_{\cC}\eta}\chi_{\cC}\eta' \in \cT^{-1}\cT$. 
       
    We show that $Y[\cC,\cC] \psd 0$ for all maximal cliques $\cC$ of the chordal graph $\Gamma$. This holds because
    \[ Y[\cC,\cC] = [y_{\bar{\eta}\eta'}]_{\eta,\eta'\in \cC} =
    [y_{\bar{\chi_{\cC}\eta}\chi_{\cC}\eta'}]_{\eta,\eta'\in \cC} = [y_{\bar{\chi}\chi'}]_{\chi,\chi'\in \chi_{\cC}\cC}\]
    which, since $\chi_{\cC}\cC \subseteq \cT$, is a principal submatrix of the positive semidefinite (by assumption) matrix $M_{\cT}(y) = [y_{\bar{\chi}\chi'}]_{\chi,\chi'\in \cT}$.
    By the chordal completion theorem (Theorem~\ref{thm:completion}) we can complete $Y$ to a positive semidefinite matrix $Y\in \S_+^{\hat{G}}$.
        The completed matrix has unit diagonal and, whenever $\bar{\chi}\chi'\in \cS$, 
        \[ Y_{\chi,\chi'} = y_{\bar{\chi}\chi'} = \ell_{\bar{\chi}\chi'}\]
        where the first equality is because the edge set of $\Gamma$ contains the edge set of $\Cay(\hat{G},\cS)$
        and the second is from the definition of $y$. Hence, 
        by Proposition~\ref{prop:moment-unstructured}, $(\ell_{\chi})_{\chi\in \cS}\in \cM(G,\cS)$, as we require.
\end{proof}

\subsection{Proof of Lemma~\ref{lem:real}}
\label{app:real}

\begin{proof}[Proof of Lemma~\ref{lem:real}]
First note that $L \in \HH_+^d$ if and only if $\bar{L}\in  \HH_+^d$
    which holds if and only if the block diagonal matrix $\left[\begin{smallmatrix} L & 0\\0 & \bar{L}\end{smallmatrix}\right] \in \HH^{2d}_+$.
    Conjugating by a unitary matrix we obtain 
    \begin{equation}
        \label{eq:goemans}
        \begin{bmatrix} \frac{1}{\sqrt{2}}I & \frac{1}{\sqrt{2}}I\\
\frac{i}{\sqrt{2}}I & -\frac{i}{\sqrt{2}}I\end{bmatrix}
\begin{bmatrix} L & 0\\0 &
\bar{L}\end{bmatrix}\begin{bmatrix}\frac{1}{\sqrt{2}}I & \frac{1}{\sqrt{2}}I\\
\frac{i}{\sqrt{2}}I & -\frac{i}{\sqrt{2}}I\end{bmatrix}^*
    = \begin{bmatrix} \phantom{-}\Re[L] & \Im[L]\\-\Im[L] & \Re[L]\end{bmatrix}.
    \end{equation}
We have simply recovered the familiar realization of $\HH_+^d$ 
as a section of $\S_+^{2d}$, and have not yet used any special 
properties of $\cL$. To complete the proof it remains to carefully choose
a $2d\times 2d$ orthogonal matrix $Q$ (depending on $J$) such that  
\[ Q \begin{bmatrix}\phantom{-}\Re[L] & \Im[L]\\-\Im[L] & \Re[L]\end{bmatrix}Q^T
= \begin{bmatrix} \Re[L]-J\Im[L] & 0 \\0 & \Re[L] - J\Im[L]\end{bmatrix}\quad\text{for all $L\in \cL$}.\]
Observe that $J^2 = I$ and $J^TJ=I$ imply that $J= J^T$.
Since $JLJ^T = \bar{L}$ we have that for all $L\in \cL$,
\begin{equation}
        \label{eq:real-im}
        \Re[L] = \frac{L+JLJ}{2}\quad\text{and}\quad
        \Im[L] = \frac{L - JLJ}{2i}.
    \end{equation}
    It follows that for all $L\in \cL$,
    $\Re[L]$ and $\Im[L]$ commute and anti-commute respectively with $J$,
    i.e.,
    \begin{equation}
        \label{eq:re-im-sym}
        J\Re[L] = \Re[L]J\quad\text{and}\quad J\Im[L] = -\Im[L]J.
    \end{equation}
Choosing $Q$ to be the orthogonal matrix $Q = \frac{1}{\sqrt{2}}\left[\begin{smallmatrix} I & J\\-J & I\end{smallmatrix}\right]$ we obtain 
\begin{align*}
    \begin{bmatrix} \frac{1}{\sqrt{2}}I & \frac{1}{\sqrt{2}}J\\
-\frac{1}{\sqrt{2}}J & \frac{1}{\sqrt{2}}I\end{bmatrix}
\begin{bmatrix}\phantom{-}\Re[L] & \Im[L]\\-\Im[L] & \Re[L]\end{bmatrix}
    \begin{bmatrix} \frac{1}{\sqrt{2}}I & \frac{1}{\sqrt{2}}J\\
-\frac{1}{\sqrt{2}}J & \frac{1}{\sqrt{2}}I\end{bmatrix}^T
  & = \begin{bmatrix} \Re[L] - J\Im[L] & 0\\0 &
\Re[L]-J\Im[L]\end{bmatrix}.\end{align*}
Clearly this last matrix is positive semidefinite if and only if 
the real symmetric matrix $\Re[L]-J\Im[L]$ is positive semidefinite,
completing the proof.
 \end{proof}

\subsection{Proof of Proposition~\ref{prop:chordal-cover-binary}}
\label{app:binary}

\begin{proof}[Proof of Proposition~\ref{prop:chordal-cover-binary}]
The proof proceeds as follows. First we define a graph $\Gamma$ and prove that it is 
   a chordal cover of $\Cay(\hat{G},\cS)$. We then characterize the maximal cliques of 
   $\Gamma$. Finally we show that for any maximal clique $\cC$ of $\Gamma$ there is some 
   $S\in 2^{[n]}$ such that $S\triangle \cC \subseteq \cT$, establishing the stated result.
    We consider the two cases $\lceil n/2\rceil$ even and $\lceil n/2\rceil$ odd
    separately. We describe the argument in detail in the case where $\lceil n/2\rceil$ is 
    even, and just sketch the required modifications in the case where $\lceil n/2\rceil$ is odd.
    
    Assume that $\lceil n/2\rceil$ is even. Let $\Gamma$ be the graph with 
    vertex set $2^{[n]}$ such that two vertices $S,T$ are 
    adjacent in $\Gamma$ if and only if either 
    \begin{itemize}
        \item $|S|$ and $|T|$ are both even and $||S|-|T|| \leq 2$ or
        \item $|S|$ and $|T|$ are both odd and $||\phi(S)| - |\phi(T)|| \leq 2$.
    \end{itemize}
    Note that just like $\Cay(\hat{G},\cS)$, the graph $\Gamma$ also has two connected components with vertex sets $\cT_{\textup{even}}$ and $\cT_{\textup{odd}}$.
    Furthermore, $\phi$ (defined in Section~\ref{sec:cayley-binary}) is also an automorphism of $\Gamma$ that exchanges these two connected components.
    Observe that if $|S\triangle T| =2$ (i.e.\ $S$ and $T$ are adjacent in $\Cay(\hat{G},\cS)$)
    then both $||S|-|T|| \leq2$ and $||\phi(S)| - |\phi(T)|| \leq 2$
    hold. Hence if $S$ and $T$ are adjacent in $\Cay(\hat{G},\cS)$ they are also adjacent in $\Gamma$. 

    We now show that $\Gamma$ is a chordal graph. Let the vertices $S_1,S_2,S_3,\ldots,S_k$ form a $k$-cycle (with $k\geq 4$) in $\Gamma$
    such that each of the $S_i\in \cT_{\textup{even}}$. Without loss
    of generality assume that $|S_1| \leq |S_i|$ for $1\leq i \leq k$.
    We show that the cycle $S_1,S_2,S_3,\ldots,S_k$ has a chord. If $|S_2|=|S_1|$ then
    $||S_1|-|S_3|| = ||S_2|-|S_3||\leq 2$ (since $S_2$ and $S_3$ are adjacent)
    and so there is a chord between $S_1$ and $S_3$. Otherwise suppose $|S_2| = |S_1|+2$. 
    Because $S_1$ and $S_k$ are adjacent we see that either $|S_k|=|S_1| = |S_2|-2$  or $|S_k| = |S_1|+2 = |S_2|$ 
    and so there is a chord between $S_2$ and $S_k$. Now suppose $S_1,S_2,S_3,\ldots,S_k$ form a $k$-cycle (with $k\geq 4$) 
    in $\Gamma$ such that each of the $S_i\in \cT_{\textup{odd}}$. Then the image of the cycle under $\phi$ is a $k$-cycle in 
    $\Gamma$ with vertices in $\cT_{\textup{even}}$ and so it has a chord. Since $\phi$ is an automorphism of $\Gamma$
    it follows that $S_1,S_2,S_3,\ldots,S_k$ also has a chord. So $\Gamma$ is a chordal cover of $\Cay(\hat{G},\cS)$. 

    The subgraphs of  $\Gamma$ induced by the vertex sets 
    $\cC_k:= \cT_{k} \cup \cT_{k+2}$ (for $k=0,2,\ldots,2\lfloor n/2\rfloor-2$) and the vertex sets 
    $\phi(\cC_k)$ (for $k=0,2,\ldots,2\lfloor n/2\rfloor-2$) are cliques in $\Gamma$. In fact, these 
    are maximal cliques in $\Gamma$. To show that each $\cC_k$ is a maximal clique, suppose $S$ is a vertex that is not 
    in $\cC_k$. Then either $|S|$ is odd (in which case $S$ is not adjacent to any element of $\cC_k$)
    or $|S| \leq k-2$ (in which case $S$ is not adjacent to any $T\in \cT_{k+2}$) or $|S|\geq k+4$
    (in which case $S$ is not adjacent to any $T\in \cT_{k}$). Hence there is no inclusion-wise larger 
    clique of $\Gamma$ containing $\cC_k$. Since $\phi$ is an automorphism of $\Gamma$ it follows that the $\phi(\cC_k)$
    are also maximal cliques of $\Gamma$. Finally, there are no other maximal cliques in $\Gamma$ because
    every edge of $\Gamma$ is contained either in $\cC_k$ or $\phi(\cC_k)$ for some $k=0,2,\ldots,2\lfloor n/2\rfloor-2$.
    
     It remains to show that for any maximal clique $\cC_k$ (for $k=0,2,\ldots,2\lfloor n/2\rfloor-2$)
     of $\Gamma$ there is $S_k\in 2^{[n]}$ such that $S_k\triangle \cC_k \subseteq \cT$.
     This is sufficient to establish that $\Cay(\hat{G},\cS)$ has a chordal cover with Fourier support $\cT$ 
     because for the cliques $\phi(\cC_k)$ we have that $\phi(S_k)\triangle \phi(\cC_k) = S_k\triangle \cC_k \subseteq \cT$.
     The following gives valid choices of $S_k$ (for $k=0,2,\ldots,2\lfloor n/2\rfloor-2$). 
     \begin{itemize}
         \item If $k\leq \lceil n/2\rceil -2$ then $\cC_k\subseteq \cT$ so we can take $S_k=\emptyset$.
         \item If $k \geq \lceil n/2\rceil$ and $n$ is even then $n=2\lceil n/2\rceil$ and so
             $n-k-2 \leq \lceil n/2\rceil -2$. Hence $[n]\triangle \cC_k = \cC_{n-k-2} \subseteq \cT$ so we can take $S_k = [n]$.
         \item If $k \geq \lceil n/2\rceil$ and $n$ is odd then $n=2\lceil n/2\rceil-1$ and so 
             $n-k+1 \leq \lceil n/2\rceil$. Hence 
             \[ \phi([n])\triangle \cC_k = [n]\triangle \phi(\cC_k) \subseteq [n]\triangle (\cT_{k-1} \cup \cT_{k+1} \cup \cT_{k+3}) 
                 \subseteq \cT_{n-k-3}\cup \cT_{n-k-1} \cup \cT_{n-k+1} \subseteq \cT\] 
            so we can take $S_k = \phi([n])$.
     \end{itemize}
    This completes the argument in the case where $\lceil n/2\rceil$ is even.

   In the case where $\lceil n/2\rceil$ is odd we exchange the roles of the odd and 
   even components in the definition of $\Gamma$ and throughout the argument. More precisely, two vertices $S,T$ are 
    adjacent in $\Gamma$ if and only if either 
    \begin{itemize}
        \item $|S|$ and $|T|$ are both odd and $||S|-|T|| \leq 2$ or
        \item $|S|$ and $|T|$ are both even and $||\phi(S)| - |\phi(T)|| \leq 2$.
    \end{itemize}
    It is still the case that $\Gamma$ is a chordal cover of $\Cay(\hat{G},\cS)$. Its maximal cliques are now
    $\cC_k:= \cT_{k} \cup \cT_{k+2}$ for $k=1,3,\ldots,2\lceil n/2\rceil -3$ together with the $\phi(\cC_k)$. Note that
   the cliques are now indexed by odd integers. As before, we can choose the $S_k$ (for $k=1,3,\ldots,2\lceil n/2\rceil -3$) to be $S_k = \emptyset$ if $k\leq \lceil n/2\rceil-2$,
$S_k = [n]$ if $k\geq \lceil n/2\rceil$ and $n$ is even, and $S_k = \phi([n])$ if $k\geq\lceil n/2\rceil$ and $n$ is odd.

   This completes the argument in the case where $\lceil n/2\rceil$ is odd.
   \end{proof}

\subsection{Proof of Theorem~\ref{thm:maincycle}: triangulation of the cycle graph}
\label{app:cylic}

In this appendix we prove Theorem \ref{thm:maincycle} concerning the triangulation of the cycle graph $C_N$. 
Theorem \ref{thm:triangulationcycle} below shows how to construct a triangulation of the cycle graph $C_{N+1}$ on $N+1$ nodes, by induction. The triangulation of $C_N$ used to obtain Theorem \ref{thm:maincycle} will then be obtained simply by contracting a certain edge of the triangulation of $C_{N+1}$ (more details below). We thus start by describing a triangulation of the $N+1$-cycle.
\begin{thm}[Triangulation of the cycle graph on $N+1$ vertices]
\label{thm:triangulationcycle}
Let $N$ be an integer greater than or equal 2. 
 Let $k_1 < \dots < k_l$ be the position of the nonzero digits in the binary expansion of $N$, i.e., $N = \sum_{i=1}^l 2^{k_i}$. Let $k$ be the largest integer such that $2^k < N$ (i.e., $k = k_1 - 1$ if $N$ is a power of two and $k=k_l$ otherwise). Then there exists a triangulation of the cycle graph on $N+1$ nodes $C_{N+1}$ with frequencies:
\begin{equation}
 \label{eq:freqtriangulation}
 \cT = \{0\} \cup \{ \pm 2^i,i=0,\dots,k\} \cup \left\{\sum_{j=1}^i 2^{k_j},i=1,\dots,l-1\right\}.
\end{equation}
\end{thm}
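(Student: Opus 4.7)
The plan is to prove Theorem~\ref{thm:triangulationcycle} by strong induction on $N$. The base case $N=2$ is trivial: $C_3$ is already a triangle, and its unique maximal clique $\{0,1,2\}$ translates by $-1$ in $\ZZ_3$ to land in $\cT = \{-1,0,1\}$. For the inductive step with $N > 2$, let $2^k$ denote the largest power of $2$ strictly less than $N$. Working in $C_{N+1}$ with vertices labeled by $\{0,1,\dots,N\}\cong\ZZ_{N+1}$, I would add the single chord $(0,2^k)$, which splits $C_{N+1}$ into two smaller cycles: an \emph{inner} cycle $G_1$ on $\{0,1,\dots,2^k\}$ (isomorphic to $C_{2^k+1}$) and an \emph{outer} cycle $G_2$ on $\{0,2^k,2^k+1,\dots,N\}$ (isomorphic to $C_{(N-2^k+1)+1}$). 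Both cycles have fewer than $N+1$ vertices, so the inductive hypothesis provides triangulations of each.

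Chordality of the combined graph is immediate: any cycle of length at least four either lies inside $G_1$ or $G_2$ (and is chorded by the inductive triangulation) or passes through both $0$ and $2^k$ (and is chorded by the new edge). It remains to show that every maximal clique admits a translation in $\ZZ_{N+1}$ into $\cT(N)$. For cliques of $G_1$ this is easy: induction gives a shift into $\cT(2^k) = \{0\}\cup\{\pm 2^i:0\leq i\leq k-1\}$, which is contained in $\cT(N)$ by direct comparison with~\eqref{eq:freqtriangulation}. Since the clique and its translate both lie in the integer interval $[-2^k,2^k]$, which injects into $\ZZ_{N+1}$, no wrap-around issue arises.

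For cliques of $G_2$, I would transport the clique through the natural cycle isomorphism $\psi:C_{N-2^k+2}\to G_2$ defined by $\psi(0)=0$ and $\psi(i)=2^k+i-1$ for $1\leq i\leq N-2^k+1$. The inductive hypothesis applied to $C_{N-2^k+2}$ gives an integer shift into $\cT(N-2^k+1)$, and undoing $\psi$ translates this back into a shift in $\ZZ_{N+1}$. The main obstacle is verifying that the resulting shifted set still lies inside $\cT(N)$; this requires a case analysis based on the binary expansion of $N$. When $N=2^{k+1}$ is itself a power of two, the outer parameter is $2^k+1$, whose frequency set $\{0\}\cup\{\pm 2^i:0\leq i\leq k\}$ matches $\cT(N)$ exactly after absorbing the $\psi$-induced offset. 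When $N=\sum_{j=1}^l 2^{k_j}$ with $l\geq 2$ (so $2^k=2^{k_l}$), the outer parameter is $\sum_{j=1}^{l-1}2^{k_j}+1$, and the subsum frequencies $\sum_{j=1}^i 2^{k_j}$ appearing in the inductive $\cT(\cdot)$ must be matched against the corresponding subsum terms in $\cT(N)$ (with the new term $\sum_{j=1}^{l-1}2^{k_j}$ supplied precisely by the index $i=l-1$ in~\eqref{eq:freqtriangulation}). In each case, every maximal clique has diameter at most $2^k<N+1$, so modular translation in $\ZZ_{N+1}$ faithfully reflects the integer translation, and the inductive step closes.
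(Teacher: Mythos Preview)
Your decomposition differs from the paper's in a way that breaks the frequency bookkeeping. The paper adds \emph{two} chords, $(0,2^k)$ and $(2^k,N)$, which carves off an explicit triangle $\{0,2^k,N\}$ (translated by $-2^k$ into $\{-2^k,0,N-2^k\}\subseteq\cT$) and leaves two sub-cycles on $\{0,\dots,2^k\}$ and $\{2^k,\dots,N\}$. The outer sub-cycle is a pure translate of the standard cycle with parameter $N-2^k$, whose binary expansion is exactly that of $N$ with the top bit $2^{k_l}$ deleted. Hence the inductive frequency set $\cT(N-2^k)$ contains only smaller powers of two and the shorter partial sums $\sum_{j=1}^{i}2^{k_j}$ for $i\leq l-2$, all of which already lie in $\cT(N)$.

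Your single-chord split forces the outer inductive parameter to be $N-2^k+1$, and this is where the argument fails. First, your map $\psi$ (with $\psi(0)=0$, $\psi(i)=2^k+i-1$) is not a translation in $\ZZ_{N+1}$, so ``undoing $\psi$'' does not produce a shift. More importantly, the ``$+1$'' can carry through the binary expansion, so the partial-sum frequencies in $\cT(N-2^k+1)$ are \emph{not} the $\sum_{j=1}^{i}2^{k_j}$ you claim. Take $N=22=2^4+2^2+2^1$, so $2^k=16$ and $\cT(22)=\{0,\pm1,\pm2,\pm4,6,\pm8,\pm16\}$. Your outer parameter is $7=2^2+2^1+2^0$, and $\cT(7)=\{0,\pm1,\pm2,3,\pm4\}$ contains the partial sum $3$, which is absent from $\cT(22)$. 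Transporting the inductive triangulation of $C_8$ to $G_2\subset\ZZ_{23}$ via the honest translation $i\mapsto 16+i$, the clique $\{0,4,7\}$ (which induction shifts by $-4$ into $\{-4,0,3\}\subset\cT(7)$) becomes $\{0,16,20\}$; the transported shift $-20$ lands it in $\{-4,0,3\}\subset\ZZ_{23}$, and $3\notin\cT(22)$. A different shift does happen to work here, but your argument does not produce it. The paper's extra chord $(2^k,N)$ is precisely what removes the offending ``$+1$'' and makes the induction close.
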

\begin{proof}
The proof of the theorem is by induction on $N$.  Consider the cycle graph on $N+1$ nodes where nodes are labeled $0,1,\dots,N$. To triangulate the graph, we first put an edge between nodes $0$ and $2^k$ and another edge between nodes $2^k$ and $N$, where $2^k$ is the largest power of two that is strictly smaller than $N$. This is depicted in Figure \ref{fig:recursive_triangulation_log2}.
\begin{figure}[ht]
  \centering
  \includegraphics[scale=1]{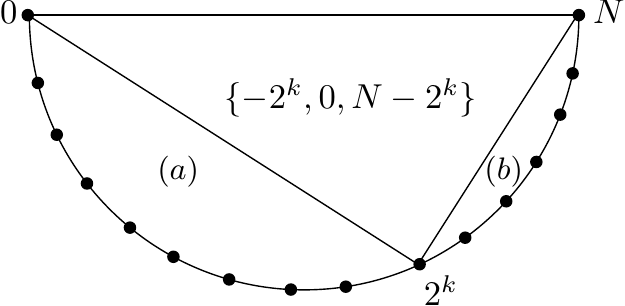}
  \caption{Recursive triangulation of the cycle $0\dots N$ on $N+1$ vertices}
  \label{fig:recursive_triangulation_log2}
\end{figure}

Note that the frequencies used by the triangle $\{0,2^k,N\}$ are equivalent, by translation, to $\{-2^k,0,N-2^k\}$. We now use induction to triangulate the two remaining parts of the cycle (denoted (a) and (b) in Figure \ref{fig:recursive_triangulation_log2}):

$\bullet$ For part (a), which is a cycle graph labeled $0\dots N'$ with $N'=2^k$, the induction hypothesis gives us a triangulation with frequencies 
\begin{equation}
\label{eq:Ka}
 \cT_a = \{0\}\cup\{ \pm 2^i,i=0,\dots,k-1\}.
\end{equation}

$\bullet$ For part (b) of the graph, we use induction on the cycle $2^k\dots N$ which is, by translation, equivalent to the cycle with labels $0\dots N''$ where $N''=N-2^k$. We distinguish two cases:
\begin{itemize}
\item[--] If $N = 2^{k+1}$, then we have $N'' = 2^k$ and induction gives a triangulation of (b) with the same frequencies as for part (a). Thus in this case we get a triangulation of the full $(N+1)$-cycle with frequencies:
\[ \cT_a \cup \{-2^k,0,2^k\} = \{0\}\cup\{ \pm 2^i,i=0,\dots,k\} \]
which is what we want.
\item[--] Now assume that $N < 2^{k+1}$, which means that the most significant bit of $N$ is at position $k=k_l$. Thus the binary expansion of $N''=N-2^k$ is the same as that of $N$ except that the bit at position $k=k_l$ is replaced with a 0. Let $k''$ be the largest integer such that $2^{k''} < N''$. Using induction we get a triangulation of the cycle $0\dots N''$ using frequencies where
\begin{equation}
 \label{eq:Kb}
 \cT_b = \{0\} \cup \{\pm 2^i,i=0,\dots,k''\} \cup \left\{\sum_{j=1}^i 2^{k_j}, j=1,\dots,l-2\right\}.
\end{equation}
Combining the triangulation of parts (a) and part (b) we get a triangulation of the $(N+1)$-cycle with frequencies 
\[ \underbrace{\{-2^k,0,N-2^k\}}_{\text{triangle $\{0,2^k,N\}$}} \cup \cT_a \cup \cT_b. \]
Given the expressions \eqref{eq:Ka} and \eqref{eq:Kb} for $\cK_a$ and $\cK_b$, and noting that $k'' \leq k-1$ and that $N - 2^k = \sum_{j=1}^{l-1} 2^{k_j}$, one can check that the triangulation has frequencies in
\[
 \cT = \{0\} \cup \{\pm 2^i,i=0,\dots,k\} \cup \left\{\sum_{j=1}^i 2^{k_j},i=1,\dots,l-1\right\}.
\]
which is exactly what we want.
\end{itemize}

To complete the proof, it remains to show the base case of the induction. We will show the base cases $N=2$ and $N=3$.
For $N=2$, note that the $(N+1)$-cycle is simply a triangle which is already triangulated and the frequencies are simply $\{-1,0,1\}$. If we evaluate expression \eqref{eq:freqtriangulation} for $N=2$ (note that here $k=0$) we get $\cT = \{-1,0,1\}$, as needed.

For $N=3$ (the 4-cycle), we have $k=1$ and $l=2$ with $k_1 = 0$ and $k_2 = 1$. Thus expression \eqref{eq:freqtriangulation} evaluates to $\cT = \{0\} \cup \{\pm 1, \pm 2\} \cup \{1\} = \{-2,-1,0,1,2\}$. It is easy to construct a triangulation of the 4-cycle with such frequencies (one can even construct one where $\cT = \cK \cup (-\cK) = \{-1,0,1\}$).
\end{proof}

\begin{example}
Figure \ref{fig:recursive_triangulation_N=8} shows the recursive construction for the case $N=8$. We have indicated in each triangle (3-clique) the associated set of frequencies.
\begin{figure}[ht]
  \centering
  \includegraphics[scale=1]{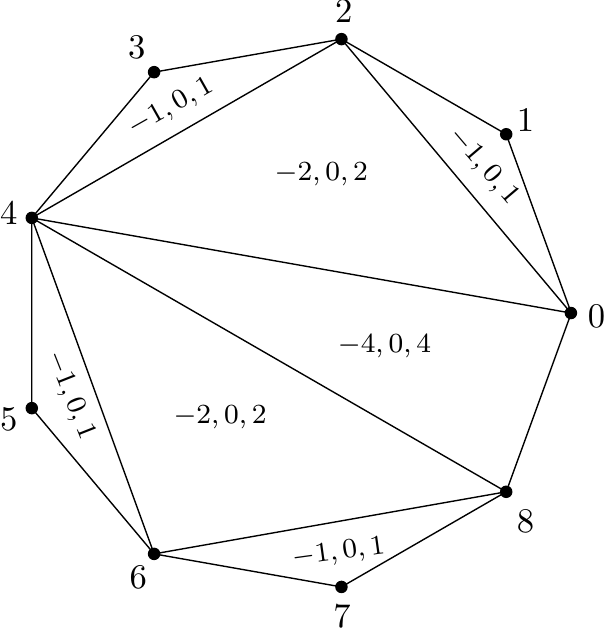}
  \caption{Illustration of the recursive triangulation of the $(N+1)$-cycle for $N=8$.}
  \label{fig:recursive_triangulation_N=8}
\end{figure}
\end{example}

\begin{proof}[Proof of Theorem \ref{thm:maincycle}]
To prove Theorem \ref{thm:maincycle} for the $N$-cycle, we use the triangulation of the $(N+1)$-cycle of Theorem \ref{thm:triangulationcycle} except that we regard nodes $0$ and $N$ as the same nodes (they collapse into a single one). Thus this means that the triangle in Figure \ref{fig:recursive_triangulation_log2} with frequencies $\{-2^k,0,N-2^k\}$ also collapses and we only have to look at the frequencies for parts (a) and (b). It is not hard to show that the frequencies we get are the same as those given in Equation \eqref{eq:freqtriangulation} except that in the middle term the iterate $i$ goes from $0$ to $k-1$ (instead of from $0$ to $k$), and in the last term the iterate $i$ goes from $1$ to $l-2$ (instead of from $1$ to $l-1$) which gives exactly the set of frequencies of Equation \eqref{eq:freqsos}.
\end{proof}

Note that there are actually many different ways of constructing triangulations for the cycle graph, and different constructions will lead to a different set of ``frequencies''. We can mention that for the cycle graph $C_{N}$ one can actually construct a triangulation where the number of frequencies is related to the logarithm of $N$ base 3. When $N$ is a power of three the frequencies are precisely the powers of 3 that are smaller than $N$. We omit the precise description of this construction, but Figure \ref{fig:triangulation_power3} shows the triangulation for the 9-cycle and 27-cycle.

\begin{figure}[ht]
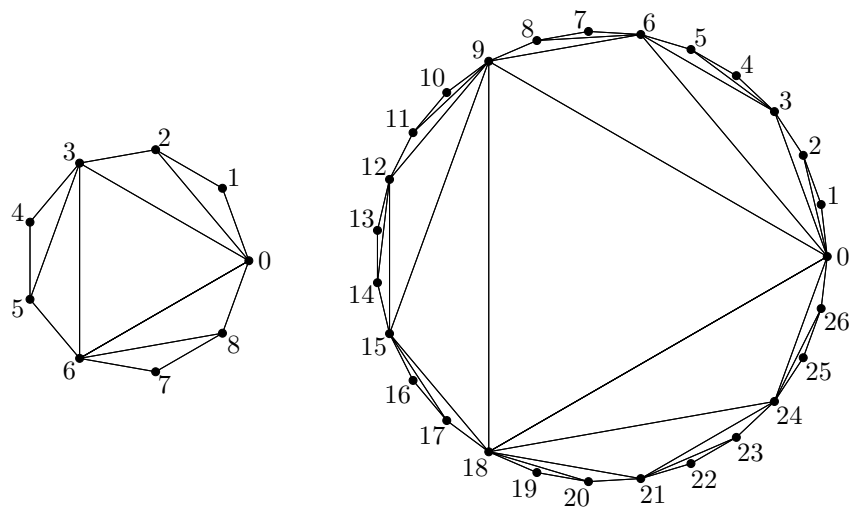

 \centering
 \begin{minipage}{\textwidth}
\centering
 \raisebox{-0.5\height}{\triangulationpowerofthree{9}{1}{1.5}}
 \qquad
  \raisebox{-0.5\height}{\triangulationpowerofthree{27}{2}{3}}
 \end{minipage}
 \caption{Triangulation of the 9-cycle with frequencies $\cT = \{0,\pm 1,\pm 3\}$ and of the 27-cycle with frequencies $\cT=\{0,\pm 1,\pm 3,\pm 9\}$.}
 \label{fig:triangulation_power3}
\end{figure}

\clearpage

\bibliographystyle{alpha}
\bibliography{../bib/psd_lifts}

\end{document}